\newcommand{\norm}[1]{\left\lVert #1\right\rVert}
\newcommand\bb[1]{\mathbb{#1}} 
\newcommand\q[1]{\mathcal{#1}} 
\newcommand\conj[1]{\overline{#1}}
\newcommand\RE{\operatorname{Re}}
\newcommand\IM{\operatorname{Im}}
\newcommand\MOD{\operatorname{mod}}
\newcommand\diag{\operatorname{diag}}
\newcommand\sgn{\operatorname{sgn}}
\DeclareMathOperator*{\argmin}{arg\,min}
\newtheorem{theorem}{Theorem}
\newtheorem{lemma}[theorem]{Lemma}
\newtheorem{proposition}[theorem]{Proposition}
\newtheorem{corollary}[theorem]{Corollary}
\newtheorem{remark}[theorem]{Remark}
\numberwithin{theorem}{section} 
\DeclareMathAlphabet{\myds}{U}{bbold}{m}{n}
\title{Stochastic Amplitude Flow for phase retrieval, its convergence and doppelg{\"a}ngers}
\author{Oleh Melnyk\thanks{Mathematical Imaging and Data Analysis, Helmholtz Center Munich, 85764 Neuherberg, Germany (\href{mailto:oleh.melnyk@helmholtz-muenchen.de}{oleh.melnyk@helmholtz-muenchen.de}).} \thanks{Department of Mathematics, Technical University of Munich, 85746 Garching bei München, Germany.}}
\begin{document}

\maketitle
\begin{abstract}

In this paper, we focus on Stochastic Amplitude Flow (SAF) for phase retrieval, a stochastic gradient descent for the amplitude-based squared loss. While the convergence to a critical point of (nonstochastic) Amplitude Flow is well-understood, SAF is a much less studied algorithm. We close this gap by deriving the convergence guarantees for SAF based on the contributions for Amplitude Flow and analysis for stochastic gradient descent. These results are then applied to two more algorithms, which can be seen as instances of SAF. The first is an extension of the Kaczmarz method for phase retrieval. The second is Ptychographic Iterative Engine, which is a popular algorithm for ptychography, a special case of phase retrieval with the short-time Fourier transform.

\textbf{Keywords:} phase retrieval, Amplitude Flow, stochastic gradient descent, ptychography, Ptychographic Iterative Engine, Kaczmarz method.

\textbf{MSC Codes:}  78A46, 78M50, 47J25, 90C26.
\end{abstract}

\section{Introduction}\label{sec: introduction}

In recent years, a measurement scheme known as ptychography \cite{Pfeiffer.2018} became particularly popular in coherent diffraction imaging as it allows obtaining high-resolution images at atomic scale \cite{Chen.2021}. The core idea is to record a collection of diffraction patterns, each corresponding to a single illumination of a small region of a specimen. Then, the goal of ptychography is to recover the illuminated object from these images. 

Ptychography is related to the math problem of phase retrieval, which arises in imaging and various areas such as astronomy \cite{Fienup.1993}, biology \cite{Piazza.2014, Giewekemeyer.2015}, crystallography \cite{Lazarev.2018, Chen.2021} and material sciences \cite{Hoydalsvik.2014, Esmaeili.2015}. Commonly, phase retrieval is a recovery of an unknown vector $x \in \bb C^d$ from the measurements of the form
\begin{equation}\label{eq: meas}
y_j = |(Ax)_j|^2 + n_j, \quad j = 1,\ldots, m,
\end{equation}
where $A \in \bb C^{m \times d}$ is a measurement matrix and $n \in \bb R^{m}$ denotes perturbations. In case of ptychography, $A$ corresponds to a (subsampled) short-time Fourier transform (STFT), which is why ptychographic reconstruction is also referred to as STFT phase retrieval. 

Note that for arbitrary $\alpha \in \bb C$ with $|\alpha| = 1$, we have that 
\[
|A \alpha x| = |\alpha A x| = |A x|,
\]
and, thus, it is not possible to distinguish $\alpha x$ and $x$ from the measurements $\eqref{eq: meas}$. This is known as the global phase ambiguity. Therefore, unique recovery of $x$ up to a global phase corresponds to unique identification of the set $\{ \alpha x : |\alpha| = 1 \}$. This is only possible if the mapping $\{\alpha z : |\alpha| = 1\} \mapsto |Az|$ is injective, i.e., there is no other vector than $x$ (up to a global phase), which provides the same measurements $y$. It was established in \cite{Conca.2015} that $m \ge 4d-4$ measurements suffice for a unique recovery if $A$ is a generic matrix. An analogous result was obtained in \cite{Kabanava.2016} for the case when the rows of $A$ are independent subgaussian random vectors and the number of measurements satisfies $m \ge c d$ for some constant $c>1$. If $A$ is the STFT matrix, uniqueness is achievable for generic objects and illumination functions \cite{Bendory.2021}. In general, the possibility of unique recovery depends on $A$. Sometimes, it can be only guaranteed for a subset of vectors in $\bb C^d$ \cite{Alaifari.2021}. Furthermore, there are well-known cases when unique recovery is not possible, e.g., if $A$ is a discrete Fourier matrix \cite{Beinert.2015}.


Over the years, various methods were developed to recover $x$ from data of the form \eqref{eq: meas}. To name a few, there are gradient methods \cite{Xu.2018, Wang.2018}, projection methods \cite{Gerchberg.1972, Thao.2021}, alternating directions method of multipliers \cite{Chang.2018}, majorize-minimization algorithms \cite{Li.2022, Fatima.2022}, convex relaxations \cite{Candes.2013, Ghods.2018}, direct methods \cite{Alexeev.2014, Iwen.2016, Forstner.2020} and many more. 

In this paper, we focus on the Amplitude Flow (AF) algorithm \cite{Xu.2018, Wang.2018}, which is a gradient descent scheme 
\[
z^{t+1} = z^t - \mu_t \nabla_z \q L(z^t), \quad t \ge 0,
\]
with step sizes $\{ \mu_t \}_{t \ge 0}$ applied to the amplitude-based squared loss function
\begin{equation}\label{eq: loss}
\q L(z) := \sum_{j = 1}^m | |(Az)_j| - \sqrt{y_j} |^2, \quad z \in \bb C^d.
\end{equation}
In the noiseless case, the true object $x$ is a global minimizer of the loss function, which is the motivation for this algorithm. However, as $\q L$ is a nonsmooth nonholomorphic function of complex variables, it requires the use of generalized Wirtinger gradient for optimization. The loss function is nonconvex and, therefore, the algorithm can stop at any critical point. It was shown that AF reaches a critical point at a sublinear rate of convergence \cite{Xu.2018}. In case of a random matrix $A$, e.g., the entries of $A$ are independently identically distributed standard complex Gaussian vectors, it is possible to construct a good initialization $z^0$ via the spectral method \cite{Candes.2015}. With such initialization, the algorithm will converge to $x$ at a linear rate \cite{Wang.2018}.       

With more efficient detectors, the resolution of recorded diffraction patterns has grown, which significantly increased data volumes to be processed by algorithms. Thus, computational efficiency became an important feature when choosing an algorithm. A possible choice of a fast algorithm is Stochastic Amplitude Flow (SAF), a version of AF where gradients are replaced with stochastic gradients \cite{Wang.2017, Xiao.2020}. While the gradient is a sum of $m$ gradients, each corresponding to a summand in \eqref{eq: loss}, the stochastic gradient only samples a few of them at each iteration, which leads to a lower computing cost. An extreme case, where a single summand in \eqref{eq: loss} is sampled, is also known as the Kaczmarz method for phase retrieval \cite{Wei.2015, Wang.2017, Tan.2019, Romer.2021, Zhang.2021}. That is, an index $j \in [m]$ is sampled and the next iterate of the Kaczmarz method is constructed such that the corresponding error term $| |(Az)_j| - \sqrt{y_j} |^2$ in \eqref{eq: loss} is nullified. While both SAF and the Kaczmarz method converge to $x$ at a linear rate if $A$ is a random matrix, in general, convergence properties of these algorithms are not known. The gap in theoretical analysis between the random and nonrandom cases is especially prominent when compared to the studies in optimization on stochastic gradient descent for nonconvex functions \cite{Nemirovski.2009, Ghadimi.2013, Khaled.2020, Orabona.2020, Sebbouh.2021, Patel.2022, Liu.2022}.

With the increasing popularity of ptychographic recovery, some of these algorithms were revisited to address STFT phase retrieval \cite{Marchesini.2016, Bendory.2018, Melnyk.2022}. For instance, in \cite{Bendory.2018} it was shown that in case of ptychography, AF has guaranteed linear convergence to $x$ if the initial guess is in a neighborhood of $x$. At the same time, new algorithms were introduced specifically for STFT phase retrieval, for example \cite{Rodenburg.2004, Iwen.2016, Prusa.2017}. A particular popularity among practitioners was obtained by Ptychographic Iterative Engine (PIE) \cite{Rodenburg.2004} and its variations \cite{Maiden.2009, Maiden.2017}. The algorithm has several theoretical interpretations \cite{Thibault.2013, Maiden.2017}. One of them \cite{Maiden.2017} describes the iteration of PIE as a gradient step for a certain loss function corresponding to a single sampled diffraction pattern. Thus, the processing of only one diffraction pattern per iteration determines the computational efficiency of PIE. However, the parameter corresponding to the step size is commonly chosen empirically by trial and error, and, to our knowledge, there is no supporting convergence analysis of PIE in the literature. 


 
Turning to our contribution, it can be separated into two parts. Firstly, we derive a convergence theory for SAF for an arbitrary measurement matrix $A$. These results are based on recent studies in optimization \cite{Khaled.2020, Orabona.2020, Liu.2022}, which show convergence of stochastic gradient descent under minimal assumptions on the loss function and, in our case, can be adjusted to suit the nonsmooth loss function \eqref{eq: loss}.

Secondly, we discuss two algorithms, which can be seen as special cases of SAF. The Kaczmarz method can be considered as an instance of SAF with a constant step size. Furthermore, for STFT phase retrieval we show that PIE is an instance of SAF. Consequently, previously established results for SAF are used to find a proper choice of step sizes for PIE, such that convergence is guaranteed to at least a critical point. 

The paper is structured in the following way. In \Cref{sec: notation} we establish the notation used throughout the paper. Next, we review basic facts about phase retrieval and Amplitude Flow in \Cref{sec: preliminaries AF}. The main part of \Cref{sec: SAF} is about Stochastic Amplitude Flow and contains new results regarding its convergence, while subsections \Cref{sec: kaczmarz} and \Cref{sec: PIE} discuss special cases of SAF, the Kaczmarz method and PIE, respectively. All proofs are provided in \Cref{sec: proofs} followed by a short conclusion.    

\subsection{Notation}\label{sec: notation}
We will use the short notation $[a] = \{1,2,\ldots, a\}$ for index sets. The complex unit is denoted by $i$. The complex conjugate, the real part, the imaginary part and the absolute value of $\alpha \in \bb C$ are given by $\bar \alpha$, $\RE \alpha$, $\IM \alpha$ and $|\alpha|$, respectively. The transpose and complex conjugate transpose of a vector $v$ or a matrix $B$ are denoted by $v^T, v^*$ and $B^T, B^*$, respectively. For $1 \le p \le \infty$ the $\ell_p$-norm of a vector $v \in \bb C^a$ is given by 
\[
\norm{v}_p^p := \sum_{j \in [a]} |v_j|^p, \quad  1 \le p < \infty, 
\quad \text{ and } \quad 
\norm{v}_\infty = \max_{j \in [a]} |v_j|.
\]
The inner product of two vectors $u,v \in \bb C^a$ is denoted by $\langle u, v\rangle = \sum_{j \in [a]} u_j \conj v_j$. The spectral norm of a matrix $B \in \bb C^{b \times a}$ is 
\[
\norm{B} := \max_{v \in \bb C^{a},\ \norm{v}=1} \norm{B v}_2,
\text{ so that } \norm{B v}_2 \le \norm{B} \norm{v}_2 \text{ for all } v \in \bb C^a.  
\] 
The Frobenius norm of $B$ is given by $\norm{B}_F = \sqrt{ \sum_{j \in [a]} \sum_{k \in [b]} |B_{k,j}|^2 }$.
The identity matrix is denoted by $I_a$. A diagonal matrix $\diag(v) \in\bb C^{a \times a}$ is formed by placing the entries of the vector $v \in \bb C^{a}$ onto main diagonal, that is 
\[
\diag(v)_{k,j} := 
\begin{cases}
v_k, & k = j, \\
0, &  k \neq j.
\end{cases}
\] 
The spectral norm of $\diag(v)$ is given by the $\ell_\infty$-norm of the vector $v$. The discrete Fourier transform matrix $F \in \bb C^{d \times d}$ is given by 
\begin{equation}\label{eq: DFT}
F_{k,j} =e^{-2 \pi i (k-1)(j-1)/d}, \quad k,j \in [d].
\end{equation}
The matrix $\frac{1}{\sqrt d} F$ is unitary, that is $\norm{\frac{1}{\sqrt d} F v}_2 = \norm{v}_2$ for all $v \in \bb C^d$ and the inverse of $F$ admits
\begin{equation}\label{eq: fourier prop}
F^{-1} = \tfrac{1}{d} F^*.
\end{equation}
The circular shift operator $S_r \in \bb C^{d \times d}$, $r \in [d]$ acts on a vector $v \in \bb C^d$ as
\begin{equation}\label{eq: shift op}
(S_r v)_j := v_{(j - 1 - r \MOD d) + 1}, \quad j \in [d].
\end{equation}

For a function $f: \bb C \to \bb C$ and a vector $v \in \bb C^a$ by $f(v)$ we will denote the entrywise application of $f$. The common use will be the absolute value $|v|$, square $v^2$ and square root $\sqrt v$, an addition of a constant $v + \alpha$ for some $\alpha \in \bb C$ and a sign function $\sgn(v)$ with
\[
\sgn(\alpha)  = 
\begin{cases}
\alpha/|\alpha|,& \alpha \neq 0, \\
0, & \alpha = 0.
\end{cases}
\]
Furthermore, $u \circ v$ and $u / v$ denote entrywise (Hadamard) product and division of two vectors $u, v \in \bb C^{a}$.

For a random variable $X$ its expectation is denoted by $\bb E X$. We say that a random event $B$ happens almost surely (a.s.) if the probability of this event is one.

Consider two sequences $\{ \alpha_t \}_{t \ge 0}, \{ \beta_t \}_{t \ge 0}$. We say that $\alpha_t$ is $o(\beta_t)$ as $t \to \infty$ if $\displaystyle\lim_{t \to \infty} \tfrac{\alpha_t}{\beta_t} = 0$. The sequence $\alpha_t$ converges to $\alpha \in \bb C$ at a linear rate if $\lim_{t \to \infty}  \frac{\alpha_{t+1} - \alpha}{\alpha_{t} - \alpha} = c$ for some $0< c < 1$. If $c= 1$,  then the convergence rate is called sublinear.  

\section{Amplitude Flow}\label{sec: preliminaries AF}

With the notation introduced above, \eqref{eq: meas} can be written as
\[
y = |Ax|^2 + n.
\]
As it was mentioned in \Cref{sec: introduction}, the Amplitude Flow (\ref{eq: AF}) algorithm is a first-order minimization of the loss function \eqref{eq: loss}. Note that $\q L$ is not differentiable everywhere. Thus, in order to apply a gradient method, we have to  consider first a smoothed objective function 
\begin{equation}\label{eq: loss smooth}
\q L_\varepsilon (z) = \norm{ \sqrt{|Az|^2 + \varepsilon} - \sqrt{y + \varepsilon} }, 
\end{equation}
with parameter $\varepsilon \ge 0$. For $\varepsilon >0$, the loss function $\q L_\varepsilon$ is twice continuously differentiable and, its Wirtinger gradient is given by
\[
\nabla_z \q L_\varepsilon (z) = A^*(A z - \sqrt{y + \varepsilon} \circ A z / \sqrt{ |Az|^2 + \varepsilon}  ),
\]
see \cite{Xu.2018, Melnyk.2022}. We provide an overview of Wirtinger derivatives in \Cref{sec: Wirtinger derivatives}. From \eqref{eq: loss smooth}, it follows that $\q L= \q L_0$ and its generalized Wirtinger gradient can be obtained by letting $\varepsilon$ tend to zero,
\begin{equation}\label{eq: generalized gradient}
\nabla_z \q L_0(z) := \lim_{\varepsilon \to 0} \nabla_z \q L_\varepsilon (z) = A^*( Az - \sqrt y \circ \sgn(Az) ).
\end{equation}

Based on these definitions, the gradient descent iteration of \ref{eq: AF} reads as
\begin{equation}\label{eq: AF} \tag{AF}
z^{t+1} = z^t - \mu_t \nabla_z \q L_\varepsilon(z^t), \quad t \ge 0,
\end{equation}
with initial guess $z_0 \in \bb C^d$ and step sizes $\{ \mu_t \}_{t \ge 0}$. The iterations of \ref{eq: AF} are repeated until a critical (fixed) point $z^{t+1} = z^{t}$ is reached. In the literature, the case $\varepsilon = 0$ is referred to as Amplitude Flow \cite{Xu.2018, Wang.2018}, while $\varepsilon>0$ is named as smoothed or perturbed Amplitude Flow \cite{Gao.2020, Luo.2020, Xiao.2021}.
 
If $A$ allows for a unique reconstruction and no noise is present, the set $\{\alpha x: |\alpha| = 1\}$ is the set of all global minimizers of $\q L_\varepsilon$, which motivates its minimization. However, since $\q L_\varepsilon$ is nonconvex, only convergence to a critical point can be expected, which is not necessarily a global minimum. In order to guarantee convergence and quantify its rate, an analogue of descent lemma \cite[Lemma 5.7]{Beck.2017} can be established. 

\begin{lemma}[Descent lemma]\label{l: descent lemma}
Let $z, v \in \bb C^d$ and $\varepsilon \ge 0$. Then, we have
\[
\q L_\varepsilon(z + v) \le \q L_\varepsilon(z) + 2 \RE( \langle \nabla_z \q L_\varepsilon(z), v \rangle) + \norm{A}^2 \norm{v}_2^2.
\]
Furthermore, if $\varepsilon >0$, the gradient $\nabla_z \q L_\varepsilon$ is Lipschitz-continuous with Lipschitz constant 
\begin{equation}\label{eq: Lipschitz constant}
L := \norm{A}^2 \max \{ 1, \norm{ y + \varepsilon }_\infty^{1/2} \varepsilon^{-1/2} -1 \},
\end{equation}
so that
\[
\norm{ \nabla_z \q L_\varepsilon(z) - \nabla_z \q L_\varepsilon(v) }_2 
\le L \norm{z - v}_2.  
\] 
\end{lemma}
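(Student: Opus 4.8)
The plan is to prove the quadratic upper bound and the Lipschitz estimate by reducing everything to scalar inequalities applied entrywise to $Az$ and $Av$, and then using $\|A\|$ to pass back to the $\ell_2$-norm on $\mathbb C^d$. The key observation is that $\mathcal L_\varepsilon(z) = \|g_\varepsilon(Az) - \sqrt{y+\varepsilon}\|^2$ where $g_\varepsilon(w) := \sqrt{|w|^2+\varepsilon}$ acts entrywise, so the whole problem is about the behaviour of the real-valued function $t \mapsto \sqrt{t+\varepsilon}$ (or equivalently $w \mapsto \sqrt{|w|^2+\varepsilon}$ on $\mathbb C$) and its composition with the square. Concretely, I would first record that for each coordinate $j$ the summand of $\mathcal L_\varepsilon$ is $\phi_\varepsilon((Az)_j)$ with $\phi_\varepsilon(w) = (\sqrt{|w|^2+\varepsilon} - c_j)^2$, $c_j := \sqrt{y_j+\varepsilon}$, a function of $w \in \mathbb C \cong \mathbb R^2$.

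For the descent inequality, I would compute the real Hessian of $\phi_\varepsilon$ (viewing $w = (\operatorname{Re} w, \operatorname{Im} w)$) and show it is bounded above by $2 I_2$ in the Loewner order, uniformly in $w$ and in $\varepsilon \ge 0$; this is the scalar heart of the matter. Writing $w = a + ib$ and $\psi_\varepsilon(a,b) = (\sqrt{a^2+b^2+\varepsilon}-c)^2$, a direct differentiation gives $\nabla\psi_\varepsilon = 2(1 - c/\sqrt{a^2+b^2+\varepsilon})(a,b)^T$ and a Hessian of the form $2(1-c/r_\varepsilon)I_2 + 2c\, r_\varepsilon^{-3}(a,b)^T(a,b)$ with $r_\varepsilon = \sqrt{a^2+b^2+\varepsilon}$; since $c\,r_\varepsilon^{-3}(a,b)^T(a,b) \preceq c\,r_\varepsilon^{-3}(a^2+b^2)I_2 \preceq c\,r_\varepsilon^{-1}I_2$, the Hessian is $\preceq 2 I_2$, and it is also $\succeq$ something controlled, but only the upper bound is needed here. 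A second-order Taylor expansion of $z \mapsto \mathcal L_\varepsilon(z)$ along the segment $z + sv$, using the chain rule with the linear map $A$, then yields $\mathcal L_\varepsilon(z+v) \le \mathcal L_\varepsilon(z) + 2\operatorname{Re}\langle \nabla_z\mathcal L_\varepsilon(z), v\rangle + \|A v\|_2^2 \le \mathcal L_\varepsilon(z) + 2\operatorname{Re}\langle \nabla_z\mathcal L_\varepsilon(z), v\rangle + \|A\|^2\|v\|_2^2$; for $\varepsilon = 0$ one passes to the limit $\varepsilon \to 0^+$ using that $\mathcal L_\varepsilon \to \mathcal L_0$ pointwise and $\nabla_z \mathcal L_\varepsilon \to \nabla_z \mathcal L_0$ (equation \eqref{eq: generalized gradient}), so the inequality survives. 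Alternatively, one can avoid the limiting argument by a direct entrywise estimate using only the inequality $|\,|w_1| - |w_2|\,| \le |w_1 - w_2|$ together with the triangle inequality for $\sqrt{|\cdot|^2+\varepsilon}$, but the Hessian route is cleaner and covers both claims at once.

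For the Lipschitz bound when $\varepsilon > 0$, the cleanest route is again scalar: the map $h_\varepsilon: \mathbb C \to \mathbb C$, $h_\varepsilon(w) = w - c\, w/\sqrt{|w|^2+\varepsilon}$ appearing in $\nabla_z\mathcal L_\varepsilon(z) = A^*(Az - \sqrt{y+\varepsilon}\circ Az/\sqrt{|Az|^2+\varepsilon})$ is differentiable (now that $\varepsilon>0$ keeps the denominator away from zero), and I would bound the operator norm of its real Jacobian by $\max\{1,\; \|y+\varepsilon\|_\infty^{1/2}\varepsilon^{-1/2}-1\}$. Differentiating $w \mapsto w/\sqrt{|w|^2+\varepsilon}$ gives a Jacobian whose eigenvalues are $1/\sqrt{|w|^2+\varepsilon}$ and $\varepsilon/(|w|^2+\varepsilon)^{3/2}$, hence the Jacobian of $h_\varepsilon$ (coordinate $c = c_j \le \|y+\varepsilon\|_\infty^{1/2}$) has eigenvalues $1 - c/\sqrt{|w|^2+\varepsilon}$ and $1 - c\varepsilon/(|w|^2+\varepsilon)^{3/2}$, both lying in $[1 - c\varepsilon^{-1/2}, 1]$, so the absolute value of each is at most $\max\{1, c\varepsilon^{-1/2}-1\} \le \max\{1, \|y+\varepsilon\|_\infty^{1/2}\varepsilon^{-1/2}-1\}$. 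Applying the mean value inequality coordinatewise and then $\|A^*(\cdot)\|_2 \le \|A\|\,\|\cdot\|_2$, $\|A(z-v)\|_2 \le \|A\|\,\|z-v\|_2$ gives $\|\nabla_z\mathcal L_\varepsilon(z) - \nabla_z\mathcal L_\varepsilon(v)\|_2 \le \|A\|^2 \max\{1,\|y+\varepsilon\|_\infty^{1/2}\varepsilon^{-1/2}-1\}\|z-v\|_2 = L\|z-v\|_2$, as claimed.

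The main obstacle I anticipate is the bookkeeping around the complex-to-real identification: getting the factors of $2$ right in $2\operatorname{Re}\langle\nabla_z\mathcal L_\varepsilon(z),v\rangle$ (the Wirtinger-gradient normalization) and correctly relating the real Hessian / Jacobian estimates for the scalar functions to the corresponding bounds after composing with the $\mathbb C^{m\times d}$ matrix $A$. Once the scalar Hessian bound $\preceq 2I_2$ and the scalar Jacobian eigenvalue bounds are in hand, the reduction through $A$ is routine, and the $\varepsilon\to 0$ limit for the first claim is a one-line continuity argument; so I would spend the care on the elementary but fiddly real-analysis estimates for $\sqrt{|w|^2+\varepsilon}$ and on matching conventions with \eqref{eq: generalized gradient}.
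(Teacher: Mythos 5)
Your proposal is correct, and the two halves compare differently with the paper's proof. For the descent inequality you are essentially doing what the paper does: the paper invokes the second-order Wirtinger--Taylor expansion together with the upper bound $\bigl[\begin{smallmatrix} u \\ \bar u\end{smallmatrix}\bigr]^* \nabla^2 \q L_\varepsilon(z) \bigl[\begin{smallmatrix} u \\ \bar u\end{smallmatrix}\bigr] \le 2\norm{A}^2\norm{u}_2^2$ taken from the entrywise Hessian computation of Xu et al.\ (\Cref{l: Hessian bounds}), and then passes to the limit $\varepsilon \to 0^+$ for the nonsmooth case exactly as you do; your entrywise real Hessian bound $\nabla^2\psi_\varepsilon \preceq 2I_2$ composed with $A$ is just a self-contained re-derivation of that same quadratic bound in real coordinates, with the same factor-of-two Wirtinger bookkeeping. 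For the Lipschitz part, however, your route is genuinely different. The paper proves a two-sided Hessian bound (the lower bound $-2(\norm{y+\varepsilon}_\infty^{1/2}\varepsilon^{-1/2}-1)\norm{A}^2\norm{u}_2^2$ is derived precisely for this purpose), converts the Wirtinger Hessian into the real Hessian via the unitary change of coordinates $U$, bounds its spectral norm by $2L$, and then cites the classical ``bounded Hessian implies Lipschitz gradient'' theorem before converting back to the Wirtinger gradient. You instead bypass the Hessian of $\q L_\varepsilon$ entirely: since $\nabla_z \q L_\varepsilon(z) = A^* h_\varepsilon(Az)$ with $h_\varepsilon$ acting entrywise, you bound the eigenvalues $1 - c_j/\sqrt{|w|^2+\varepsilon}$ and $1 - c_j\varepsilon/(|w|^2+\varepsilon)^{3/2}$ of the symmetric $2\times 2$ real Jacobian of $h_\varepsilon$ by $\max\{1, c_j\varepsilon^{-1/2}-1\}$, apply the mean value inequality coordinatewise, and pick up $\norm{A}^2$ from the outer $A^*$ and inner $A$; the constants check out and reproduce $L$ exactly. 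Your argument is more elementary (no second derivatives of the loss, no real/Wirtinger Hessian conversion) and makes transparent where the $\varepsilon^{-1/2}$ blow-up comes from, while the paper's approach has the advantage of reusing the already-computed Wirtinger Hessian machinery and a standard textbook theorem, at the cost of needing the extra lower Hessian bound and the careful unitary conversion.
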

This result can be deduced from a proof in \cite[pp. 26-29]{Xu.2018} and is a key component in the analysis of gradient methods for phase retrieval. 

Let 
\begin{equation}\label{eq: loss min}
\q L_\varepsilon^{\inf} := \inf_{z \in \bb C^d} \q L_\varepsilon(z) \ge 0
\end{equation}
be the infimum of $\q L_\varepsilon$. \Cref{l: descent lemma} can be used for iterations of \ref{eq: AF} to conclude that for a properly chosen constant step size, the algorithm will decrease the value of the loss function \eqref{eq: loss smooth}. Furthermore, the gradient will eventually vanish and the rate of convergence is sublinear.

\begin{theorem}[{\cite[Theorem 1]{Xu.2018}}]\label{thm: AF convergence}
Let $\varepsilon \ge 0$. Consider measurements $y$ of the form \eqref{eq: meas} with $\mu_t = \mu$ satisfying $0<\mu \le \norm{A}^{-2}$ and arbitrary $z^0 \in \bb C^d$. Then, for iterates $\{ z^t \}_{t \ge 0}$ defined by \ref{eq: AF} we have
\[
\q L_\varepsilon(z^{t+1}) \le \q L_\varepsilon(z^t) - \mu \norm{\nabla_z \q L_\varepsilon(z^t) }_2^2 \ \text{ for all } t \ge 0,
\]
\[
\norm{\nabla_z \q L_\varepsilon(z^t)}_2 \to 0,\ t \to \infty.
\]
Furthermore, if the number of iterations $T$ fulfills
\[
T \ge \gamma^{-2} \mu^{-1} (\q L_\varepsilon(z^0) - \q L_\varepsilon^{\inf})
\]
for some $\gamma>0$, then the norms of the gradients satisfy
\[ 
\min_{t= 0,\ldots, T-1} \norm{\nabla_z \q L_\varepsilon(z^t)}_2 \le \gamma.
\] 
\end{theorem}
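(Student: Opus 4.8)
The plan is to apply the descent lemma (\Cref{l: descent lemma}) along the trajectory of \ref{eq: AF} and then telescope; no deep idea is needed beyond this, so the bulk of the work is bookkeeping.

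First I would invoke \Cref{l: descent lemma} with $z = z^t$ and $v = -\mu \nabla_z \q L_\varepsilon(z^t)$, so that $z + v = z^{t+1}$. Using $\RE\langle w, -\mu w\rangle = -\mu\norm{w}_2^2$ this gives
\[
\q L_\varepsilon(z^{t+1}) \le \q L_\varepsilon(z^t) - 2\mu \norm{\nabla_z \q L_\varepsilon(z^t)}_2^2 + \norm{A}^2 \mu^2 \norm{\nabla_z \q L_\varepsilon(z^t)}_2^2 = \q L_\varepsilon(z^t) - \mu\big(2 - \mu \norm{A}^2\big)\norm{\nabla_z \q L_\varepsilon(z^t)}_2^2.
\]
The hypothesis $0 < \mu \le \norm{A}^{-2}$ yields $\mu\norm{A}^2 \le 1$, hence $2 - \mu\norm{A}^2 \ge 1$, which is exactly the first displayed inequality of the theorem.

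Next, for the vanishing of the gradient, I would sum this inequality over $t = 0,\ldots,T-1$ and telescope the left-hand side, using the definition \eqref{eq: loss min} of $\q L_\varepsilon^{\inf}$ to bound $\q L_\varepsilon(z^T)$ from below:
\[
\mu \sum_{t=0}^{T-1} \norm{\nabla_z \q L_\varepsilon(z^t)}_2^2 \le \q L_\varepsilon(z^0) - \q L_\varepsilon(z^T) \le \q L_\varepsilon(z^0) - \q L_\varepsilon^{\inf}.
\]
Letting $T \to \infty$, the nonnegative series $\sum_{t\ge 0}\norm{\nabla_z \q L_\varepsilon(z^t)}_2^2$ has bounded partial sums, hence converges, so its general term tends to $0$, i.e.\ $\norm{\nabla_z \q L_\varepsilon(z^t)}_2 \to 0$. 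For the complexity bound I would bound the minimum by the average: since all terms are nonnegative,
\[
T \min_{t=0,\ldots,T-1}\norm{\nabla_z \q L_\varepsilon(z^t)}_2^2 \le \sum_{t=0}^{T-1}\norm{\nabla_z \q L_\varepsilon(z^t)}_2^2 \le \mu^{-1}\big(\q L_\varepsilon(z^0) - \q L_\varepsilon^{\inf}\big),
\]
so that $\min_t \norm{\nabla_z \q L_\varepsilon(z^t)}_2^2 \le (T\mu)^{-1}\big(\q L_\varepsilon(z^0) - \q L_\varepsilon^{\inf}\big)$; substituting the assumed lower bound on $T$ makes the right-hand side at most $\gamma^2$, and taking square roots concludes.

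As for obstacles: there really is none once \Cref{l: descent lemma} is available, since this is the standard descent-lemma argument for convergence of gradient descent to a stationary point. The only points deserving a word of care are that at $\varepsilon = 0$ the objective is nonsmooth and $\nabla_z \q L_0$ denotes the generalized Wirtinger gradient \eqref{eq: generalized gradient} — but \Cref{l: descent lemma} is stated for every $\varepsilon \ge 0$ with precisely this gradient, so nothing further is required — and that $\q L_\varepsilon^{\inf} \ge 0 > -\infty$, which makes the telescoped bound finite and hence meaningful.
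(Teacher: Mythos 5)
Your proof is correct and follows exactly the intended route: the paper does not reprove this result (it defers to \cite[Theorem 1]{Xu.2018}), but your argument --- applying \Cref{l: descent lemma} with $v=-\mu\nabla_z\q L_\varepsilon(z^t)$, using $\mu\norm{A}^2\le 1$, then telescoping and bounding the minimum by the average --- is precisely the standard descent argument behind that citation and mirrors the paper's own proof of the stochastic analogue (\Cref{thm: SAF convergence constant}). No gaps; your remarks about $\varepsilon=0$ and $\q L_\varepsilon^{\inf}\ge 0$ cover the only delicate points.
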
  

\section{Stochastic Amplitude Flow}\label{sec: SAF}

If the number of measurements is extremely large, the computation of the gradient $\nabla_z \q L_\varepsilon(z)$ becomes costly. One of possible solutions to this problem is to consider stochastic gradients instead. Consider a partition of $A$ into $R \in [m]$ row blocks, each denoted by $A_r,$ $r \in [R]$. Furthermore, let $y^r,$ $r \in [R]$, be the measurements corresponding to $A_r$, so that
\[
A = 
\begin{bmatrix}
A_1\\
\vdots\\
A_R 
\end{bmatrix},
\quad 
y = \begin{bmatrix}
y^1\\
\vdots\\
y^R 
\end{bmatrix}.
\]     
Then, $\q L_\varepsilon$ naturally splits into a sum of loss functions for each of the blocks,
\[
\q L_\varepsilon(z) = \norm{ \sqrt{|Az|^2 + \varepsilon} - \sqrt{y + \varepsilon} }_2^2 =  \sum_{r \in [R]} \norm{ \sqrt{ |A_r z|^2 + \varepsilon} - \sqrt{y^r + \varepsilon} }_2^2 =: \sum_{r \in [R]} \q L_{\varepsilon,r}(z).
\] 
Based on this decomposition, the stochastic gradient of $\q L_\varepsilon$ at a point $z \in \bb C^d$ is defined as
\begin{equation}\label{eq: stochastic gradient}
g_\varepsilon(z) = \frac{1}{K} \sum_{k \in [K]} \frac{1}{p_{r_k}} \nabla_z \q L_{\varepsilon,r_k}(z),
\end{equation}
where $K \in \bb N$ and the indices $r_1, \ldots, r_K$ are drawn independently at random with replacement from a distribution $p$. We require that $0< p_r < 1$, $r \in [R]$ and $\sum_{r \in[R]}p_r = 1$. In this paper, we will only consider sampling with replacement, while some other sampling schemes can be found in \cite[Section 4.5]{Khaled.2020}. The scaling factors $1/p_{r_k}$ are chosen in a way that \eqref{eq: stochastic gradient} is an unbiased estimate of the gradient.
\begin{proposition}[{Version of \cite[Proposition 3]{Khaled.2020}}]\label{p: gradient properties}
Let $\varepsilon \ge 0$. The stochastic gradient \eqref{eq: stochastic gradient} satisfies 
\[
\bb E g_\varepsilon(z) = \nabla_z \q L_{\varepsilon}(z).
\]
Furthermore, it admits
\[
\bb E \norm{ g_\varepsilon(z) }_2^2 \le \alpha (\q L_\varepsilon(z) - \q L_\varepsilon^{\inf}) 
+ \beta \norm{\nabla_z \q L_\varepsilon(z)}_2^2
+ \delta,
\]
with
\begin{equation}\label{eq: abc params}
\alpha = \frac{1}{K} \max_{r \in [R]} \frac{ \norm{A_r}^2 }{p_r}, \quad
\beta =  1 - \frac{1}{K}, \quad
\delta = \alpha \left[ \q L_\varepsilon^{\inf} - \sum_{r \in R} \q L_{\varepsilon,r}^{\inf} \right] \ge 0,
\end{equation}
where $\q L_{\varepsilon,r}^{\inf}$ are defined analogously to \eqref{eq: loss min}.
\end{proposition}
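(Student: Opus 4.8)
The plan is to reduce the statement to a blockwise gradient bound and then carry out the standard variance decomposition for an average of i.i.d.\ unbiased estimators. First I would record the auxiliary inequality
\[
\norm{\nabla_z \q L_{\varepsilon, r}(z)}_2^2 \le \norm{A_r}^2\big(\q L_{\varepsilon, r}(z) - \q L_{\varepsilon, r}^{\inf}\big), \qquad r \in [R],
\]
which follows by applying \Cref{l: descent lemma} with $(A_r, y^r)$ in place of $(A, y)$ --- legitimate since $\q L_{\varepsilon, r}$ has exactly the form of $\q L_\varepsilon$ --- then substituting the descent direction $v = -\norm{A_r}^{-2}\nabla_z \q L_{\varepsilon, r}(z)$ and using $\q L_{\varepsilon, r}^{\inf} \le \q L_{\varepsilon, r}(z + v)$ (the case $\norm{A_r} = 0$ being trivial). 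It is essential to draw the constant $\norm{A_r}^2$ from the descent lemma rather than from Lipschitz smoothness of the gradient, because $\varepsilon = 0$ is admitted and $\nabla_z \q L_0$ need not be Lipschitz.

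Unbiasedness is immediate: for each fixed summand, $\bb E[p_{r_k}^{-1}\nabla_z\q L_{\varepsilon, r_k}(z)] = \sum_{r \in [R]} p_r \cdot p_r^{-1}\nabla_z\q L_{\varepsilon, r}(z) = \nabla_z \q L_\varepsilon(z)$, and averaging over $k \in [K]$ preserves this. For the second moment I would write $X_k := p_{r_k}^{-1}\nabla_z \q L_{\varepsilon, r_k}(z)$, so that $g_\varepsilon(z) = K^{-1}\sum_k X_k$ with $X_1,\dots,X_K$ i.i.d.\ of mean $\nabla_z\q L_\varepsilon(z)$, and expand $\bb E\norm{K^{-1}\sum_k X_k}_2^2$ into the $K$ diagonal terms $K^{-2}\bb E\norm{X_k}_2^2$ plus the $K(K-1)$ cross terms, each of which equals $K^{-2}\norm{\nabla_z\q L_\varepsilon(z)}_2^2$ by independence; the cross terms generate the coefficient $\beta = 1 - 1/K$. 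For a single diagonal term, $\bb E\norm{X_1}_2^2 = \sum_{r \in [R]} p_r^{-1}\norm{\nabla_z\q L_{\varepsilon, r}(z)}_2^2$, and the auxiliary inequality bounds this by $\max_{r}\tfrac{\norm{A_r}^2}{p_r}\sum_{r}(\q L_{\varepsilon, r}(z) - \q L_{\varepsilon, r}^{\inf}) = \max_{r}\tfrac{\norm{A_r}^2}{p_r}\big(\q L_\varepsilon(z) - \sum_{r}\q L_{\varepsilon, r}^{\inf}\big)$. Writing $\q L_\varepsilon(z) - \sum_{r}\q L_{\varepsilon, r}^{\inf} = (\q L_\varepsilon(z) - \q L_\varepsilon^{\inf}) + (\q L_\varepsilon^{\inf} - \sum_{r}\q L_{\varepsilon, r}^{\inf})$ then produces exactly the coefficients $\alpha, \beta$ and the constant $\delta$ of \eqref{eq: abc params}, with $\delta \ge 0$ because $\q L_\varepsilon^{\inf} = \inf_z\sum_{r}\q L_{\varepsilon, r}(z) \ge \sum_{r} \inf_z \q L_{\varepsilon, r}(z)$.

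The computations are routine; the two points that need care are (i) extracting the sharp constant $\norm{A_r}^2$ in the auxiliary inequality while keeping $\varepsilon = 0$ admissible, and (ii) the complex-inner-product bookkeeping in the variance expansion, where one uses that for $k \ne l$ the quantity $\bb E\langle X_k, X_l\rangle$ factors coordinatewise as $\langle \bb E X_k, \bb E X_l\rangle$ and is therefore real and equal to $\norm{\nabla_z\q L_\varepsilon(z)}_2^2$. I do not expect a genuine obstacle, so the proof should be brief.
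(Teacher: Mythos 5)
Your proposal is correct and follows essentially the same route as the paper: unbiasedness by linearity, the blockwise bound $\norm{\nabla_z \q L_{\varepsilon,r}(z)}_2^2 \le \norm{A_r}^2(\q L_{\varepsilon,r}(z)-\q L_{\varepsilon,r}^{\inf})$ obtained from the descent lemma with step $\norm{A_r}^{-2}$ (the paper gets it by invoking \Cref{thm: AF convergence}, which rests on the same computation), and the bias--variance split of the averaged estimator (the paper cites equation (41) of Khaled--Richt\'arik where you expand it directly), followed by the identical decomposition $\q L_\varepsilon(z)-\sum_r \q L_{\varepsilon,r}^{\inf} = (\q L_\varepsilon(z)-\q L_\varepsilon^{\inf}) + (\q L_\varepsilon^{\inf}-\sum_r \q L_{\varepsilon,r}^{\inf})$ giving $\alpha$, $\beta$, $\delta$. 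No gaps.
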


\begin{remark}
The results in \cite{Khaled.2020} are derived for functions of real variables, while we work with functions of complex variables. This leads to small changes in the constants $\alpha, \beta, \delta$ compared to the original results in \cite{Khaled.2020}. For completeness, we provide proofs for the complex case in \Cref{sec: complex sgd}.
\end{remark}

Now, by replacing the gradient in the iteration of \ref{eq: AF} with the stochastic gradient, we obtain an iteration of stochastic Amplitude Flow (\ref{eq: SAF}),
\begin{equation}\label{eq: SAF} \tag{SAF}
z^{t+1} = z^t - \mu_t g_\varepsilon(z^t), \quad t \ge 0.
\end{equation}
Note that the indices $r^{t_1}, \ldots, r^{t_K}$ are drawn independently at random for each iteration $t \ge 0$.

Previously, \ref{eq: SAF} was considered for real random matrices $A$, i.e., the entries of $A$ are independent standard Gaussian random variables \cite{Wang.2017, Xiao.2020}. It was shown that if each block $A_r$ corresponds precisely to a single measurement and noise is absent, it is possible to construct an initialization $z^0$ such that the algorithm converges to $x$ at a linear rate \cite[Theorem 1]{Wang.2017}. In \cite[Section 3]{Zhang.2021} the authors showed that the strong convexity of $\q L_0$ in a neighborhood of $x$ is sufficient to guarantee a linear convergence rate of \ref{eq: SAF}, if the algorithm is initialized within this neighborhood. 

Recent results for stochastic gradient descent \cite{Khaled.2020, Orabona.2020, Liu.2022} suggest that it is possible to derive sublinear convergence guarantees for \ref{eq: SAF} for any $A$ analogous to \Cref{thm: AF convergence}. The first statement addresses a constant choice of step sizes.

\begin{theorem}[{Version of \cite[Theorem 2 and Corollary 1]{Khaled.2020}}]\label{thm: SAF convergence constant}
Let $\alpha, \beta$ and $\delta$ be defined as in \eqref{eq: abc params}, $\varepsilon \ge 0$ and fix $\gamma >0$. Consider a sequence $\{z_t \}_{t \ge 0}$ determined by \ref{eq: SAF} with an arbitrary starting point $z^0 \in \bb C^{d}$ and constant step sizes $\mu_t = \mu,$ $t \ge 0$. If the number of iterations $T$ and the step size $\mu$ fulfill
\[
T \ge \frac{4 [ \q L_\varepsilon(z^0)  - \q L_\varepsilon^{\inf}]}{\gamma^2 \mu}
\quad \text{ and } \quad
\mu \le \min \left\{ \frac{1}{\sqrt{\alpha T} \norm{A} }, \frac{1}{ \beta \norm{A}^2}, \frac{\gamma^2}{2 \delta \norm{A}^2} \right\},
\]
then the expected norms of the gradients satisfy
\[
\min_{t = 0, \ldots, T-1} \bb E \norm{\nabla_z \q L_\varepsilon(z^t)}_2 \le \gamma.
\]
Furthermore, if $\mu$ is chosen precisely as the minimum above, it is sufficient that the number of iterations admits
\[
T \ge \max \left\{ \frac{16 \alpha \norm{A}^2 [ \q L_\varepsilon(z^0)  - \q L_\varepsilon^{\inf}]^2  }{\gamma^4}, \frac{4 \beta \norm{A}^2 [ \q L_\varepsilon(z^0)  - \q L_\varepsilon^{\inf}]}{\gamma^2}, \frac{8 \delta \norm{A}^2 [ \q L_\varepsilon(z^0)  - \q L_\varepsilon^{\inf}] }{\gamma^2} \right\}.
\]

\end{theorem}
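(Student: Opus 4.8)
The plan is to mirror the nonconvex stochastic-gradient analysis of \cite[Theorem 2, Corollary 1]{Khaled.2020}, feeding the one-step estimate produced by \Cref{l: descent lemma} into the ``ABC''-type second-moment bound of \Cref{p: gradient properties}. Write $\q L := \q L_\varepsilon$, $\q L^{\inf} := \q L_\varepsilon^{\inf}$, and set $\Delta_t := \bb E\q L(z^t) - \q L^{\inf} \ge 0$, $a := \mu^2\alpha\norm{A}^2$, $c := \mu^2\delta\norm{A}^2$. First I would apply \Cref{l: descent lemma} with $z = z^t$ and $v = -\mu g_\varepsilon(z^t)$, then take the expectation conditional on the iterates up to $z^t$ over the indices sampled at step $t$: by unbiasedness $\bb E g_\varepsilon(z^t) = \nabla_z\q L(z^t)$ the linear term becomes $-2\mu\norm{\nabla_z\q L(z^t)}_2^2$, and bounding the conditional second moment of $g_\varepsilon(z^t)$ by the ABC estimate and then taking full expectation yields
\[
\Delta_{t+1} \le (1+a)\Delta_t - \mu\bigl(2-\mu\beta\norm{A}^2\bigr)\bb E\norm{\nabla_z\q L(z^t)}_2^2 + c .
\]
The hypothesis $\mu \le 1/(\beta\norm{A}^2)$ forces $2-\mu\beta\norm{A}^2 \ge 1$, so the gradient term carries a coefficient of at least $\mu$.

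Because of the multiplicative factor $1+a$ this recursion does not telescope directly, so next I would multiply it by the weight $w_t := (1+a)^{-(t+1)} \in (0,1]$ (equivalently, work with $\tilde\Delta_t := (1+a)^{-t}\Delta_t$), giving $\mu\, w_t\,\bb E\norm{\nabla_z\q L(z^t)}_2^2 \le (1+a)^{-t}\Delta_t - (1+a)^{-(t+1)}\Delta_{t+1} + c\,w_t$. Summing over $t = 0,\ldots,T-1$ collapses the first two terms to $\Delta_0 - (1+a)^{-T}\Delta_T \le \Delta_0$, so with $W := \sum_{t=0}^{T-1} w_t$ one obtains, using that a weighted average dominates the minimum,
\[
\min_{t=0,\ldots,T-1}\bb E\norm{\nabla_z\q L(z^t)}_2^2 \le \frac{1}{W}\sum_{t=0}^{T-1} w_t\,\bb E\norm{\nabla_z\q L(z^t)}_2^2 \le \frac{\Delta_0}{\mu W} + \frac{c}{\mu} = \frac{\Delta_0}{\mu W} + \mu\delta\norm{A}^2 .
\]

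It remains to bound $W$ from below and invoke the two remaining constraints. The bound $\mu \le 1/(\sqrt{\alpha T}\norm{A})$ is exactly $aT \le 1$; together with Bernoulli's inequality $(1+a)^T \ge 1+aT$ this gives $W = a^{-1}\bigl(1-(1+a)^{-T}\bigr) \ge T/(1+aT) \ge T/2$. Hence $\Delta_0/(\mu W) \le 2\Delta_0/(\mu T) \le \gamma^2/2$ by the iteration-count hypothesis $T \ge 4\Delta_0/(\gamma^2\mu)$, while $\mu\delta\norm{A}^2 \le \gamma^2/2$ by $\mu \le \gamma^2/(2\delta\norm{A}^2)$; adding the two halves gives $\min_t\bb E\norm{\nabla_z\q L(z^t)}_2^2 \le \gamma^2$, and the inequality $\bb E\norm{\nabla_z\q L(z^t)}_2 \le (\bb E\norm{\nabla_z\q L(z^t)}_2^2)^{1/2}$ together with monotonicity of the square root upgrades this to $\min_t\bb E\norm{\nabla_z\q L(z^t)}_2 \le \gamma$. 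The second assertion is obtained by substituting the explicit value $\mu = \min\{\,\cdot,\cdot,\cdot\,\}$ into $T \ge 4\Delta_0/(\gamma^2\mu)$ and resolving the resulting self-referential inequality separately according to which of the three terms attains the minimum, each case producing one of the three stated lower bounds on $T$.

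The main obstacle is the reweighting and the lower bound $W \ge T/2$: a direct summation of the unweighted recursion leaves a residual $a\sum_t \Delta_t$ that, bounded crudely, costs a constant factor and misses the clean threshold $\gamma$; the weights $w_t = (1+a)^{-(t+1)}$ make the recursion telescope exactly, and the estimate $W \ge T/2$ --- which is precisely where the step-size cap $\mu \le 1/(\sqrt{\alpha T}\norm{A})$ is used --- is what keeps the averaging essentially lossless. A minor point worth recording is that only the first (gradient-inequality) part of \Cref{l: descent lemma} enters, and it holds for every $\varepsilon \ge 0$, so no Lipschitz-gradient assumption is needed and the argument covers the nonsmooth case $\varepsilon = 0$.
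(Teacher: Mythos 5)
Your proposal is correct and follows essentially the same route as the paper: the conditional descent estimate combined with the ABC second-moment bound (the paper packages this as its SAF descent lemma), reweighting by $(1+\alpha\norm{A}^2\mu^2)^{-(t+1)}$ to telescope, Bernoulli's inequality together with $\alpha\norm{A}^2\mu^2 T\le 1$ to keep the weighted average within a factor $2$, the $\gamma^2/2+\gamma^2/2$ split, and the three-case substitution for the explicit bound on $T$. The only cosmetic differences are that you bound the weight sum $W\ge T/2$ directly (the paper manipulates the geometric sum in an equivalent way, treating the trivial case $\alpha=0$, where your closed form for $W$ degenerates, separately) and that you make explicit the Jensen step passing from squared to unsquared expected gradient norms.
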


Comparing \Cref{thm: SAF convergence constant} and \Cref{thm: AF convergence}, we observe that for \ref{eq: SAF} the norms of the gradients are not guaranteed to vanish, unlike for \ref{eq: AF}. Furthermore, the number of iterations required to achieve the upper bound $\gamma$ on the minimal expected norm of the gradients scales in $\gamma^{-2} [ \q L_\varepsilon(z^0)  - \q L_\varepsilon^{\inf}]$ quadratically for \ref{eq: SAF} if $\alpha >0$, while for \ref{eq: AF} this dependency is only linear. In fact, \Cref{thm: SAF convergence constant} generalizes \Cref{thm: AF convergence} as \ref{eq: AF} can be seen as a special case of \ref{eq: SAF} with $\alpha = \delta = 0$ and $\beta = 1$.  

\begin{remark}\label{rem: abs constants}
The choice of $T$ and $\mu$ in \Cref{thm: SAF convergence constant} depends on the quality of the initial guess $\q L_\varepsilon(z^0)  - \q L_\varepsilon^{\inf}$. While it is unknown, it can be estimated from above by $\q L_\varepsilon(z^0)$. Furthermore, an alternative statement can be derived, where $\q L_\varepsilon(z^0)  - \q L_\varepsilon^{\inf}$ is replaced with $\q L_\varepsilon(z^0)$ and the cases with parameter $\delta$ are not present.  
\end{remark}

The main drawback of \Cref{thm: SAF convergence constant} is that the choice of the step size depends on the number of iterations. Hence, this theorem only covers a finite number of iterations and cannot be extended to a convergence result. Moreover, it is known that a small constant step size causes the mean distance between a global maximum and the iterates to grow \cite[Theorem 3.4]{Ding.2019}. If instead the step sizes are decaying, an alternative result is applicable for \ref{eq: SAF}. 

\begin{theorem}[{Version of \cite[Theorem 4]{Liu.2022}}]\label{thm: SAF convergence decreasing expectation}
Let $\varepsilon \ge 0$ and $\beta$ be defined as in \eqref{eq: abc params}. Consider a sequence $\{z_t \}_{t \ge 0}$ determined by \ref{eq: SAF} with an arbitrary starting point $z^0 \in \bb C^{d}$. If the step sizes are positive and satisfy 
\begin{equation}\label{eq: step conditions}
\sum_{t = 0}^\infty \mu_t  = \infty, \quad \sum_{t = 0}^\infty \mu_t^2 < \infty \quad  
\text{ and } \beta \norm{A}^2 \mu_t \le 1,  
\end{equation}
then $\bb E \q L_\varepsilon (z^t)$ converges, $c^2 := \sum_{t =0}^\infty \bb E \mu_t \norm{\nabla_z \q L_\varepsilon(z^t)}_2^2 < \infty$,  
and 
\[
\min_{t = 0, \ldots, T-1} \bb E \norm{\nabla_z \q L_\varepsilon(z^t)}_2 \le c \left[ \sum_{t = 0}^{T-1} \mu_t \right]^{-\frac{1}{2}}
\text{ so that }  
\min_{t = 0, \ldots, T-1} \bb E \norm{\nabla_z \q L_\varepsilon(z^t)}_2 \to 0, \text{ as } t \to \infty.
\]
If additionally $\mu_t$ is decreasing and 
\begin{equation}\label{eq: step conditions 2}
\sum_{t = 0}^\infty  \frac{\mu_t}{\sum_{s = 0}^{t-1} \mu_s}  = \infty,
\end{equation}
the asymptotic rate of convergence is, 
\begin{align*}
\min_{t = 0, \ldots, T-1} \bb E \norm{\nabla_z \q L_\varepsilon(z^t)}_2 & = o \left( \left[ \sum_{t = 0}^{T-1} \mu_t \right]^{-\frac{1}{2}} \right), \text{ as } T \to \infty.
\end{align*}
\end{theorem}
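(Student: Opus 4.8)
The plan is to run the standard stochastic-gradient-descent argument under an "expected smoothness / ABC" hypothesis, adapted to the nonsmooth complex loss \eqref{eq: loss smooth}. First I would apply the $\varepsilon\ge 0$ part of \Cref{l: descent lemma} (smoothness-type inequality with constant $\norm{A}^2$) to $z=z^t$, $v=-\mu_t g_\varepsilon(z^t)$, and combine it with \Cref{p: gradient properties}. Conditioning on the $\sigma$-algebra $\q F_t$ generated by the indices sampled before iteration $t$, and using both $\bb E[g_\varepsilon(z^t)\mid\q F_t]=\nabla_z\q L_\varepsilon(z^t)$ and the second-moment bound, then taking full expectations and writing $D_t:=\bb E\q L_\varepsilon(z^t)-\q L_\varepsilon^{\inf}\ge 0$ and $w_t:=\bb E\norm{\nabla_z\q L_\varepsilon(z^t)}_2^2$, I obtain the one-step recursion
\[
D_{t+1}\le (1+a_t)\,D_t-\big(2\mu_t-\beta\norm{A}^2\mu_t^2\big)w_t+b_t,\qquad a_t:=\alpha\norm{A}^2\mu_t^2,\ \ b_t:=\delta\norm{A}^2\mu_t^2 .
\]
The step-size condition $\beta\norm{A}^2\mu_t\le 1$ from \eqref{eq: step conditions} forces $2\mu_t-\beta\norm{A}^2\mu_t^2\ge\mu_t$, so the recursion becomes $D_{t+1}\le(1+a_t)D_t-\mu_t w_t+b_t$; moreover $\sum_t a_t<\infty$ and $\sum_t b_t<\infty$ since $\sum_t\mu_t^2<\infty$ (and $a_t=b_t=0$ when $\alpha=\delta=0$, recovering \Cref{thm: AF convergence}).

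Next I would extract the three conclusions from this recursion. Dropping the $-\mu_t w_t$ term leaves $D_{t+1}\le(1+a_t)D_t+b_t$ with summable $a_t,b_t$; the classical real-sequence lemma (e.g.\ the auxiliary sequence $v_t:=D_t\prod_{s<t}(1+a_s)^{-1}+\sum_{s\ge t}b_s\prod_{r\le s}(1+a_r)^{-1}$ is nonincreasing and bounded below) shows $D_t$ converges and is bounded by some $M<\infty$; adding $\q L_\varepsilon^{\inf}$ gives convergence of $\bb E\q L_\varepsilon(z^t)$. Rearranging the full recursion as $\mu_t w_t\le D_t-D_{t+1}+a_tD_t+b_t$ and summing over $t=0,\dots,T-1$ telescopes the first two terms, giving $\sum_{t=0}^{T-1}\mu_t w_t\le D_0+M\sum_t a_t+\sum_t b_t<\infty$ uniformly in $T$, hence $c^2:=\sum_{t\ge 0}\mu_t w_t<\infty$. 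Then $\big(\min_{t<T}w_t\big)\,\sum_{t<T}\mu_t\le\sum_{t<T}\mu_t w_t\le c^2$, and since by Jensen $\bb E\norm{\nabla_z\q L_\varepsilon(z^t)}_2\le w_t^{1/2}$ for each $t$, evaluating at the minimizer of $w_t$ yields $\min_{t<T}\bb E\norm{\nabla_z\q L_\varepsilon(z^t)}_2\le c\,[\sum_{t<T}\mu_t]^{-1/2}$, which tends to $0$ because $\sum_t\mu_t=\infty$.

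For the asymptotic $o(\cdot)$ refinement I would argue through the tails of the convergent series $\sum_t\mu_t w_t$. Writing $S_T:=\sum_{t<T}\mu_t$, $m_T:=\min_{t<T}w_t$ and $R_T:=\sum_{t\ge T}\mu_t w_t\to 0$: for every $T'>T$ the running minimum satisfies $w_t\ge m_{T'}$ whenever $T\le t<T'$, so $m_{T'}(S_{T'}-S_T)\le R_T$, i.e.\ $m_{T'}S_{T'}\le R_T\,S_{T'}/(S_{T'}-S_T)$; letting $T'\to\infty$ (so $S_{T'}\to\infty$) gives $\limsup_{T'}m_{T'}S_{T'}\le R_T$, and then $T\to\infty$ forces $m_TS_T\to 0$, which combined with $\bb E\norm{\nabla_z\q L_\varepsilon(z^t)}_2\le m_T^{1/2}$ is precisely $\min_{t<T}\bb E\norm{\nabla_z\q L_\varepsilon(z^t)}_2=o(S_T^{-1/2})$. (The monotonicity of $\mu_t$ and condition \eqref{eq: step conditions 2} are the hypotheses under which \cite{Liu.2022} phrases this refinement; in the present setting the tail argument above already suffices.)

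The main obstacle — beyond the bookkeeping of the Wirtinger factor of $2$ and the measurability/conditional-expectation setup for \eqref{eq: SAF} — is making the passage from "$c^2<\infty$" to the $o(\cdot)$ statement airtight: the running minimum $m_T$ may stall on long stretches where $w_t$ is only moderately small, so the bound cannot be driven by any pointwise decay of $w_t$ and must instead be controlled entirely by the vanishing tail $R_T$ relative to the diverging partial sums $S_T$, exactly as above.
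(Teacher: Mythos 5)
Your proposal is correct, and its core skeleton coincides with the paper's: you derive exactly the one-step recursion of \Cref{l: descent lemma SAF} in expectation (cf.\ \eqref{eq: descent lemma SAF expectation}), use the step-size condition $\beta\norm{A}^2\mu_t\le 1$ to absorb the $\beta$-term, telescope to get convergence of $\bb E\q L_\varepsilon(z^t)$ and finiteness of $c^2$, and pass to the gradient norm via Jensen. The differences are in the two auxiliary ingredients. Where the paper invokes the Robbins--Siegmund result (\Cref{p: near-supermartingale convergence}) applied to the deterministic sequences of expectations, you prove the needed deterministic fact by hand with the auxiliary sequence $v_t=D_t\prod_{s<t}(1+a_s)^{-1}+\sum_{s\ge t}b_s\prod_{r\le s}(1+a_r)^{-1}$; this is a minor, equally valid substitute. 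More interestingly, for the $o(\cdot)$ refinement the paper goes through \Cref{l: asymptotic rate} (Lemma~3 of the cited work by Liu et al.), which is where the monotonicity of $\mu_t$ and condition \eqref{eq: step conditions 2} enter, whereas you give a self-contained tail argument: from $m_{T'}(S_{T'}-S_T)\le R_T$ with $R_T=\sum_{t\ge T}\mu_t w_t\to 0$ and $S_{T'}\to\infty$ you conclude $m_TS_T\to 0$ directly. I checked this argument and it is sound (indeed, if $m_{T_k}S_{T_k}\ge c>0$ along a subsequence one would get $\sum_t\mu_t w_t\ge c\sum_k(1-S_{T_{k-1}}/S_{T_k})=\infty$, contradicting $c^2<\infty$), so your route establishes the $o\bigl(\bigl[\sum_{t<T}\mu_t\bigr]^{-1/2}\bigr)$ rate for the expectation statement without using the decrease of $\mu_t$ or \eqref{eq: step conditions 2} at all; what the paper's route buys instead is a single lemma that is reused verbatim for the almost-sure version (\Cref{thm: SAF convergence decreasing}), while your argument is more elementary and shows those extra hypotheses are not needed for this conclusion. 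One cosmetic point: the inequality $\bb E\norm{\nabla_z\q L_\varepsilon(z^t)}_2\le m_T^{1/2}$ should be stated for the index attaining $m_T=\min_{t<T}\bb E\norm{\nabla_z\q L_\varepsilon(z^t)}_2^2$, i.e.\ $\min_{t<T}\bb E\norm{\nabla_z\q L_\varepsilon(z^t)}_2\le m_T^{1/2}$, which is clearly what you intend.
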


An example of step sizes satisfying conditions \eqref{eq: step conditions} and \eqref{eq: step conditions 2} are provided in the next corollary.
\begin{corollary}[{Version of \cite[Theorem 4]{Liu.2022}}]\label{cor: step size}
Let $\mu_t = \tfrac{\mu}{(1 + t)^{1/2 + \theta} },$ $t \ge 0$, with $\mu>0$ such that $\beta \norm{A}^2 \mu \le 1$ and $0 < \theta < 1/2$, then the iterates of 
\ref{eq: SAF} admit
\[
\min_{t = 0, \ldots, T-1} \bb E \norm{\nabla_z \q L_\varepsilon(z^t)}_2^2 \le \frac{(1/2 - \theta)c^2}{\mu [ (1+ T)^{1/2 - \theta} - 1] },  
\]  
and
\[
\min_{t = 0, \ldots, T-1} \bb E \norm{\nabla_z \q L_\varepsilon(z^t)}_2 = o \left( T^{\theta/2 -1/4}  \right), \ T \to \infty,
\]
where $c$ is the constant defined in \Cref{thm: SAF convergence decreasing expectation}.
Consequently, if the number of iterations admits
\[
T \ge [c^2 \mu^{-1} \gamma^{-2} (1/2 - \theta) + 1 ]^{\frac{2}{1 -2\theta}} - 1. 
\]
for an arbitrary $\gamma >0$, we have $\min_{t = 0, \ldots, T-1} \bb E \norm{\nabla_z \q L_\varepsilon(z^t)}_2 \le \gamma$.
\end{corollary}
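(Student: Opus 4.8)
The plan is to deduce everything from \Cref{thm: SAF convergence decreasing expectation}: first check that the prescribed step sizes $\mu_t = \mu(1+t)^{-1/2-\theta}$ satisfy its hypotheses, then convert the abstract conclusion $\min_{t<T}\bb E\norm{\nabla_z \q L_\varepsilon(z^t)}_2 \le c\,[\sum_{t=0}^{T-1}\mu_t]^{-1/2}$ into the explicit rates by an elementary integral comparison on $\sum_{t=0}^{T-1}\mu_t$. Concretely, for \eqref{eq: step conditions}: since $0<\tfrac12+\theta<1$ the series $\sum_t(1+t)^{-1/2-\theta}$ diverges; since $1+2\theta>1$ the series $\sum_t(1+t)^{-1-2\theta}$ converges; and $\mu_t\le\mu_0=\mu$, so $\beta\norm{A}^2\mu_t\le\beta\norm{A}^2\mu\le1$ by assumption. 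The sequence $\mu_t$ is manifestly decreasing. For \eqref{eq: step conditions 2}, understood from $t=1$, I would bound the partial sums from above, $\sum_{s=0}^{t-1}\mu_s = \mu\sum_{k=1}^{t}k^{-1/2-\theta}\le \mu\big(1+\int_1^{t}x^{-1/2-\theta}\,dx\big)\le C(\mu,\theta)\,t^{1/2-\theta}$, and combine with $\mu_t\ge\mu(2t)^{-1/2-\theta}$ (valid for $t\ge1$) to get $\mu_t/\sum_{s=0}^{t-1}\mu_s\ge c' t^{-1}$, whose sum over $t$ diverges. Hence \Cref{thm: SAF convergence decreasing expectation} applies, producing a finite $c^2=\sum_{t\ge0}\bb E\mu_t\norm{\nabla_z\q L_\varepsilon(z^t)}_2^2$ and the accompanying $o$-rate.

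Next I would extract the explicit decay. Because $\mu_t$ is deterministic, $c^2\ge\sum_{t=0}^{T-1}\mu_t\,\bb E\norm{\nabla_z\q L_\varepsilon(z^t)}_2^2\ge\big(\min_{t<T}\bb E\norm{\nabla_z\q L_\varepsilon(z^t)}_2^2\big)\sum_{t=0}^{T-1}\mu_t$, so it remains to lower bound $\sum_{t=0}^{T-1}\mu_t=\mu\sum_{k=1}^{T}k^{-1/2-\theta}$. Comparing with $\int_1^{T+1}x^{-1/2-\theta}\,dx$ gives $\sum_{t=0}^{T-1}\mu_t\ge\mu\,\frac{(1+T)^{1/2-\theta}-1}{1/2-\theta}$, which immediately yields the first displayed inequality of the corollary. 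For the $o$-statement, this same lower bound shows $[\sum_{t=0}^{T-1}\mu_t]^{-1/2}\le C''\,T^{\theta/2-1/4}$ for all large $T$ (using $(1+T)^{1/2-\theta}-1\ge\tfrac12 T^{1/2-\theta}$ eventually); since the theorem gives $\min_{t<T}\bb E\norm{\nabla_z\q L_\varepsilon(z^t)}_2=o\big([\sum_{t=0}^{T-1}\mu_t]^{-1/2}\big)$ and the ratio $[\sum_{t=0}^{T-1}\mu_t]^{-1/2}/T^{\theta/2-1/4}$ is bounded, the little-$o$ transfers to $o(T^{\theta/2-1/4})$. Finally, for the iteration-count claim I would take $t^\ast$ attaining $\min_{t<T}\bb E\norm{\nabla_z\q L_\varepsilon(z^t)}_2^2$, use $\bb E\norm{\cdot}_2\le(\bb E\norm{\cdot}_2^2)^{1/2}$, and solve $\frac{(1/2-\theta)c^2}{\mu[(1+T)^{1/2-\theta}-1]}\le\gamma^2$; rearranging gives $(1+T)^{1/2-\theta}\ge c^2\mu^{-1}\gamma^{-2}(1/2-\theta)+1$, i.e. exactly $T\ge[c^2\mu^{-1}\gamma^{-2}(1/2-\theta)+1]^{2/(1-2\theta)}-1$.

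The only mildly delicate point is the verification of \eqref{eq: step conditions 2}: one must note that the denominator $\sum_{s=0}^{t-1}\mu_s$ is an empty (hence zero) sum at $t=0$, so the series is read from $t=1$, and one needs a genuine \emph{upper} bound on the partial sums to \emph{lower}-bound the ratio, rather than the one-sided integral estimate used elsewhere. Apart from this bookkeeping, everything is a routine integral comparison plus the deterministic-coefficient version of the ``$\min\le$ weighted average'' inequality; no idea beyond \Cref{thm: SAF convergence decreasing expectation} is needed.
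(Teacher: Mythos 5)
Your proposal is correct and follows essentially the same route as the paper: verify \eqref{eq: step conditions} and \eqref{eq: step conditions 2} by integral/series comparison (including the upper bound on partial sums needed to lower-bound the ratio), invoke \Cref{thm: SAF convergence decreasing expectation}, lower-bound $\sum_{t=0}^{T-1}\mu_t$ by $\tfrac{\mu}{1/2-\theta}[(1+T)^{1/2-\theta}-1]$ to get the explicit rate, and solve for $T$. The minor differences (your explicit $\mu_t\ge\mu(2t)^{-1/2-\theta}$ constant versus the paper's $\tfrac{1/2-\theta}{1+t}$ bound, and spelling out the Jensen step) do not change the argument.
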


As stated in \Cref{thm: SAF convergence decreasing}, for vanishing step sizes the convergence rate is slightly slower compared to the constant step sizes. This can be seen in \Cref{cor: step size} as the dependency of $T$ on $\gamma^{-1}$ is higher than the quartic dependency observed in \Cref{thm: SAF convergence constant}.

The result of \Cref{thm: SAF convergence decreasing expectation} can be strengthened to an almost sure convergence.

\begin{theorem}[{Version of \cite[Theorem 4]{Liu.2022}}]\label{thm: SAF convergence decreasing}
Let $\varepsilon \ge 0$ and $\beta$ be defined as in \eqref{eq: abc params}. Consider a sequence $\{z_t \}_{t \ge 0}$ determined by \ref{eq: SAF} with an arbitrary starting point $z^0 \in \bb C^{d}$. If the step sizes are positive numbers satisfying  \eqref{eq: step conditions}, then $\q L_\varepsilon (z^t)$ converges a.s., the random variable $C^2 := \sum_{t =0}^\infty \mu_t \norm{\nabla_z \q L_\varepsilon(z^t)}_2^2$ is finite a.s., and 
\[
\min_{t = 0, \ldots, T-1} \norm{\nabla_z \q L_\varepsilon(z^t)}_2 \le C \left[ \sum_{t = 0}^{T-1} \mu_t \right]^{-\frac{1}{2}}, \quad \text{a.s.}
\]

If additionally $\mu_t$ is decreasing and admits \eqref{eq: step conditions 2}, then 
\begin{align*}
\min_{t = 0, \ldots, T-1} \norm{\nabla_z \q L_\varepsilon(z^t)}_2 & = o \left( \left[ \sum_{t = 0}^{T-1} \mu_t \right]^{-\frac{1}{2}} \right), \text{ as } T \to \infty, \quad \text{a.s.}
\end{align*}
\end{theorem}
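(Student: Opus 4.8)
The plan is to recognize the iteration as stochastic gradient descent equipped with an ``ABC/expected-smoothness'' second-moment bound and a quadratic majorant, and then to run the classical Robbins--Siegmund almost-supermartingale argument, exactly as in \cite{Liu.2022}. Let $\mathcal{F}_t$ denote the $\sigma$-algebra generated by the indices sampled in iterations $0,\dots,t-1$, so that $z^t$ is $\mathcal{F}_t$-measurable and (since $z^0$ is deterministic and each step introduces finitely many new random choices) $z^t$ takes only finitely many values, making all relevant expectations finite. By \Cref{p: gradient properties}, $\mathbb{E}[g_\varepsilon(z^t)\mid\mathcal{F}_t]=\nabla_z\mathcal{L}_\varepsilon(z^t)$ and $\mathbb{E}[\norm{g_\varepsilon(z^t)}_2^2\mid\mathcal{F}_t]\le\alpha(\mathcal{L}_\varepsilon(z^t)-\mathcal{L}_\varepsilon^{\inf})+\beta\norm{\nabla_z\mathcal{L}_\varepsilon(z^t)}_2^2+\delta$. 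Applying \Cref{l: descent lemma} to $z^{t+1}=z^t-\mu_t g_\varepsilon(z^t)$, taking $\mathbb{E}[\,\cdot\mid\mathcal{F}_t]$, and writing $V_t:=\mathcal{L}_\varepsilon(z^t)-\mathcal{L}_\varepsilon^{\inf}\ge 0$, one obtains
\[
\mathbb{E}[V_{t+1}\mid\mathcal{F}_t]\le(1+\alpha\norm{A}^2\mu_t^2)V_t-(2\mu_t-\beta\norm{A}^2\mu_t^2)\norm{\nabla_z\mathcal{L}_\varepsilon(z^t)}_2^2+\delta\norm{A}^2\mu_t^2.
\]
This derivation uses only the quadratic upper bound and unbiasedness, so the nonsmooth case $\varepsilon=0$ is included with no extra work.

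Next I would verify the hypotheses of Robbins--Siegmund. By \eqref{eq: step conditions} we have $\beta\norm{A}^2\mu_t\le1$, hence $2\mu_t-\beta\norm{A}^2\mu_t^2\ge\mu_t\ge0$, so the middle term is a bona fide nonnegative ``decrease'' term; and since $\sum_t\mu_t^2<\infty$, both the multiplicative perturbation $\sum_t\alpha\norm{A}^2\mu_t^2$ and the additive term $\sum_t\delta\norm{A}^2\mu_t^2$ are finite. The almost-supermartingale convergence theorem then yields, on a full-probability event: $V_t$ converges (so $\mathcal{L}_\varepsilon(z^t)$ converges a.s.) and $\sum_t(2\mu_t-\beta\norm{A}^2\mu_t^2)\norm{\nabla_z\mathcal{L}_\varepsilon(z^t)}_2^2<\infty$; bounding each summand below by $\mu_t\norm{\nabla_z\mathcal{L}_\varepsilon(z^t)}_2^2\ge0$ gives $C^2=\sum_t\mu_t\norm{\nabla_z\mathcal{L}_\varepsilon(z^t)}_2^2<\infty$ a.s.

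The first rate bound is then immediate pathwise on the event $\{C^2<\infty\}$: since $(\min_{t<T}\norm{\nabla_z\mathcal{L}_\varepsilon(z^t)}_2^2)\sum_{t=0}^{T-1}\mu_t\le\sum_{t=0}^{T-1}\mu_t\norm{\nabla_z\mathcal{L}_\varepsilon(z^t)}_2^2\le C^2$, taking square roots gives the claim. For the $o(\cdot)$ refinement I would again argue pathwise. First, $\sum_t\mu_t=\infty$ forces $m_T:=\min_{t<T}\norm{\nabla_z\mathcal{L}_\varepsilon(z^t)}_2^2\to0$, since otherwise $\sum_t\mu_t\norm{\nabla_z\mathcal{L}_\varepsilon(z^t)}_2^2$ would diverge. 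Then, given $\eta>0$, choose $N$ with $\sum_{t\ge N}\mu_t\norm{\nabla_z\mathcal{L}_\varepsilon(z^t)}_2^2<\eta$; for $T>N$, comparing the minimum over $\{N,\dots,T-1\}$ with $m_T$ yields $m_T(\sum_{t=0}^{T-1}\mu_t-\sum_{t=0}^{N-1}\mu_t)<\eta$, and letting $T\to\infty$ with $m_T\to0$ and $\sum_{t=0}^{N-1}\mu_t$ fixed gives $\limsup_T m_T\sum_{t=0}^{T-1}\mu_t\le\eta$; as $\eta$ is arbitrary, $m_T\sum_{t=0}^{T-1}\mu_t\to0$, which is the asserted $o([\sum_{t=0}^{T-1}\mu_t]^{-1/2})$ statement after a square root. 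The decrease of $\{\mu_t\}$ and condition \eqref{eq: step conditions 2} are the hypotheses under which \cite{Liu.2022} frames this refinement.

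I do not anticipate a genuine obstacle. The only delicate points are (i) matching the almost-supermartingale inequality to the precise hypotheses of Robbins--Siegmund — in particular the nonnegativity of the decrease term via $\beta\norm{A}^2\mu_t\le1$ and the summability of the two perturbation series via $\sum_t\mu_t^2<\infty$ — and (ii) the elementary but slightly fussy deterministic tail estimate behind the $o(\cdot)$ rate. The nonsmoothness at $\varepsilon=0$ is harmless because \Cref{l: descent lemma} and \Cref{p: gradient properties} already hold for every $\varepsilon\ge0$.
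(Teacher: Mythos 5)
Your proof is correct, and its core is the same as the paper's: you form the conditional descent inequality from \Cref{l: descent lemma} and \Cref{p: gradient properties} (this is exactly \Cref{l: descent lemma SAF}), feed it into the Robbins--Siegmund almost-supermartingale theorem (\Cref{p: near-supermartingale convergence}) with the identical choices $Y_t = \q L_\varepsilon(z^t)-\q L_\varepsilon^{\inf}$, $\eta_t = \alpha\norm{A}^2\mu_t^2$, $Z_t=\delta\norm{A}^2\mu_t^2$, and obtain the first bound from the same ``minimum $\le$ weighted average'' inequality. The one genuine divergence is the $o(\cdot)$ refinement: the paper simply invokes \Cref{l: asymptotic rate} (Lemma~3 of the cited reference), whose hypotheses are the monotonicity of $\mu_t$ and \eqref{eq: step conditions 2}, whereas you give a self-contained $\varepsilon$--$N$ tail argument: split $\sum_{t=0}^{T-1}\mu_t$ at an index $N$ beyond which $\sum_{t\ge N}\mu_t\norm{\nabla_z\q L_\varepsilon(z^t)}_2^2<\eta$, bound the minimum over $\{0,\dots,T-1\}$ by each term in the tail block, and use $\min_{t<T}\norm{\nabla_z\q L_\varepsilon(z^t)}_2^2\to 0$ (forced by $\sum_t\mu_t=\infty$) to kill the head. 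This argument is airtight pathwise on the event $\{C^2<\infty\}$, and it is in fact slightly stronger than what the paper states, since it never uses the decreasingness of $\mu_t$ or condition \eqref{eq: step conditions 2}; only \eqref{eq: step conditions} enters. What the paper's route buys is brevity and alignment with the cited source; what yours buys is a fully elementary, assumption-lean proof of the asymptotic rate. The remaining details (nonnegativity of the decrease term via $\beta\norm{A}^2\mu_t\le 1$, summability of the perturbation series via $\sum_t\mu_t^2<\infty$, handling $\varepsilon=0$ through the generalized gradient) all match the paper and are handled correctly.
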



In fact, all statements of \Cref{thm: SAF convergence decreasing expectation} except the convergence of $\bb E \q L_{\varepsilon}(z^t)$ are a consequence of the monotone convergence theorem \cite[Theorem 2.3.4]{Athreya.2006} applied to \Cref{thm: SAF convergence decreasing}.
Since \Cref{thm: SAF convergence decreasing} is analogous to \Cref{thm: SAF convergence decreasing expectation}, a version of \Cref{cor: step size} with an almost sure convergence also holds.

The outcome of \Cref{thm: SAF convergence decreasing} is the following. Firstly, $\{ \q L_\varepsilon(z^t) \}_{t \ge 0}$ converges to some value. Secondly, the obtained in \Cref{thm: SAF convergence decreasing} limit $\inf_{t \ge 0} \norm{\nabla_z \q L_\varepsilon(z^t)}_2 = 0$ implies an existence of $\{ z^{t_k} \}_{k \ge 0}$ such that $\norm{\nabla_z \q L_\varepsilon(z^{t_k})}_2 \to 0$ as $k \to \infty$.

\begin{corollary}\label{col: min and liminf equivalence}
Under conditions of \Cref{thm: SAF convergence decreasing}, we have $\displaystyle\liminf_{t \to \infty} \norm{\nabla_z \q L_\varepsilon(z^{t})}_2 =0$ a.s.
\end{corollary}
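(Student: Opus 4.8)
The plan is to deduce the statement from the two facts already provided by \Cref{thm: SAF convergence decreasing}: the random variable $C^2 := \sum_{t=0}^\infty \mu_t \norm{\nabla_z \q L_\varepsilon(z^t)}_2^2$ is finite almost surely, and the step sizes satisfy $\sum_{t=0}^\infty \mu_t = \infty$ by \eqref{eq: step conditions}. I would argue pointwise on the almost sure event $\Omega_0$ on which $C^2 < \infty$. Fix $\omega \in \Omega_0$ and suppose, for contradiction, that $\ell := \liminf_{t\to\infty}\norm{\nabla_z \q L_\varepsilon(z^t(\omega))}_2 > 0$. By the definition of $\liminf$ there is an index $t_0 = t_0(\omega)$ with $\norm{\nabla_z \q L_\varepsilon(z^t(\omega))}_2 \ge \ell/2$ for all $t \ge t_0$, whence
\[
C^2(\omega) = \sum_{t=0}^\infty \mu_t \norm{\nabla_z \q L_\varepsilon(z^t(\omega))}_2^2 \ge \frac{\ell^2}{4}\sum_{t = t_0}^\infty \mu_t = \infty,
\]
because the tail of a divergent series of nonnegative terms still diverges. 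This contradicts $\omega \in \Omega_0$, so $\ell = 0$. Since $\omega \in \Omega_0$ was arbitrary and $\norm{\cdot}_2 \ge 0$, we conclude $\liminf_{t\to\infty}\norm{\nabla_z \q L_\varepsilon(z^{t})}_2 = 0$ a.s.

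All ingredients being in place, there is no genuine obstacle here; the only point deserving care is that one must not settle for $\inf_{t\ge 0}\norm{\nabla_z \q L_\varepsilon(z^t)}_2 = 0$ — which is immediate from the displayed bound $\min_{t = 0, \ldots, T-1} \norm{\nabla_z \q L_\varepsilon(z^t)}_2 \le C \left[ \sum_{t = 0}^{T-1} \mu_t \right]^{-1/2} \to 0$ — since that infimum could be attained at one small $t$ and carries no information about the tail of the sequence. Passing instead through the summability of $\mu_t \norm{\nabla_z \q L_\varepsilon(z^t)}_2^2$ against the divergence of $\sum_t \mu_t$ is exactly what upgrades the conclusion to a statement about $\liminf$, and the argument is robust to any measurability concern because it is carried out $\omega$ by $\omega$ on the full-measure set $\Omega_0$.
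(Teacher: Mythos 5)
Your proof is correct and rests on exactly the same two ingredients as the paper's: the almost sure finiteness of $C^2=\sum_{t}\mu_t\norm{\nabla_z \q L_\varepsilon(z^t)}_2^2$ and the divergence of $\sum_t \mu_t$, applied to tails of the sequence. The paper reaches the conclusion directly by letting $T\to\infty$ in the tail inequality \eqref{eq: lim inf tech} for each fixed $s$, while you argue by contradiction on a full-measure event; these are the same argument, just stated in contrapositive form.
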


This result can be strengthened to the convergence of the gradient almost surely for loss functions with Lipschitz-continuous gradients.

\begin{theorem}[{Version of \cite[Theorem 2]{Orabona.2020}}]\label{thm: a.s. convergence of gradient}
Let $\varepsilon >0$. Consider a sequence $\{z_t \}_{t \ge 0}$ determined by \ref{eq: SAF} with an arbitrary starting point $z^0 \in \bb C^{d}$. If the step sizes are positive numbers satisfying  \eqref{eq: step conditions}, then,
\[
\lim_{t \to \infty} \norm{\nabla_z \q L_\varepsilon(z^t)}_2 = 0, \quad \text{a. s.}
\]
\end{theorem}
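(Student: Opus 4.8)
## Proof proposal for Theorem \ref{thm: a.s. convergence of gradient}

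The plan is to combine the outcome of \Cref{thm: SAF convergence decreasing} — in particular \Cref{col: min and liminf equivalence}, which gives $\liminf_{t\to\infty} \norm{\nabla_z \q L_\varepsilon(z^t)}_2 = 0$ almost surely — with the Lipschitz continuity of $\nabla_z \q L_\varepsilon$ (available since $\varepsilon>0$, with constant $L$ from \Cref{l: descent lemma}) to upgrade the $\liminf$ to a full limit. The standard device here is a contradiction argument of the Robbins–Siegmund / Orabona type: suppose that on an event of positive probability $\limsup_{t\to\infty} \norm{\nabla_z \q L_\varepsilon(z^t)}_2 = 2\eta > 0$ while $\liminf_{t\to\infty} \norm{\nabla_z \q L_\varepsilon(z^t)}_2 = 0$. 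Then the sequence $\norm{\nabla_z \q L_\varepsilon(z^t)}_2$ must cross the band $[\eta, 2\eta)$ infinitely often, and I want to show this forces $\sum_t \mu_t \norm{\nabla_z \q L_\varepsilon(z^t)}_2^2 = \infty$, contradicting the finiteness of $C^2$ from \Cref{thm: SAF convergence decreasing}.

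First I would make the crossing argument quantitative. Because the gradient is $L$-Lipschitz and the SAF step is $z^{t+1} - z^t = -\mu_t g_\varepsilon(z^t)$, we get the one-step control
\[
\bigl| \norm{\nabla_z \q L_\varepsilon(z^{t+1})}_2 - \norm{\nabla_z \q L_\varepsilon(z^t)}_2 \bigr| \le L \mu_t \norm{g_\varepsilon(z^t)}_2.
\]
On each excursion where the gradient norm rises from below $\eta$ to above $2\eta$, the accumulated increments $\sum L \mu_t \norm{g_\varepsilon(z^t)}_2$ over the excursion must be at least $\eta$. The difficulty is that $\norm{g_\varepsilon(z^t)}_2$ is a random (stochastic-gradient) quantity, not $\norm{\nabla_z \q L_\varepsilon(z^t)}_2$; so I first need an almost-sure summability statement controlling $\sum_t \mu_t^2 \norm{g_\varepsilon(z^t)}_2^2$. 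This follows from \Cref{p: gradient properties}: taking conditional expectations, $\bb E[\norm{g_\varepsilon(z^t)}_2^2 \mid \q F_t] \le \alpha(\q L_\varepsilon(z^t) - \q L_\varepsilon^{\inf}) + \beta \norm{\nabla_z \q L_\varepsilon(z^t)}_2^2 + \delta$, and since $\q L_\varepsilon(z^t)$ converges a.s.\ (hence is bounded a.s.) and $\sum_t \mu_t^2 < \infty$, a Robbins–Siegmund supermartingale argument gives $\sum_t \mu_t^2 \norm{g_\varepsilon(z^t)}_2^2 < \infty$ a.s., and in particular $\mu_t \norm{g_\varepsilon(z^t)}_2 \to 0$ a.s. That last fact means the one-step jumps in the gradient norm vanish, so during each excursion from level $\eta$ up past $2\eta$ the sequence must take many intermediate steps, during all of which $\norm{\nabla_z \q L_\varepsilon(z^t)}_2 \ge \eta$.

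Next I would convert "many steps with gradient norm $\ge \eta$" into divergence of $\sum_t \mu_t \norm{\nabla_z \q L_\varepsilon(z^t)}_2^2$. Over one excursion indexed by $t \in [u, v)$ the gradient rises by at least $\eta$, so $\eta \le \sum_{t=u}^{v-1} L\mu_t \norm{g_\varepsilon(z^t)}_2 \le L \bigl(\sum_{t=u}^{v-1}\mu_t\bigr)^{1/2}\bigl(\sum_{t=u}^{v-1}\mu_t \norm{g_\varepsilon(z^t)}_2^2\bigr)^{1/2}$ by Cauchy–Schwarz. Since the second factor under the sum is a.s.\ summable over all $t$, summing over the infinitely many disjoint excursions forces $\sum_{\text{excursions}} \sum_{t=u}^{v-1}\mu_t = \infty$. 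But on every such $t$ we have $\norm{\nabla_z \q L_\varepsilon(z^t)}_2 \ge \eta$, hence $\sum_t \mu_t \norm{\nabla_z \q L_\varepsilon(z^t)}_2^2 \ge \eta^2 \sum_{\text{excursions}}\sum_{t=u}^{v-1}\mu_t = \infty$, contradicting $C^2 < \infty$ from \Cref{thm: SAF convergence decreasing}. Therefore $\limsup_{t\to\infty}\norm{\nabla_z \q L_\varepsilon(z^t)}_2 = 0$ a.s., which together with nonnegativity gives the claimed limit.

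The main obstacle I anticipate is the bookkeeping in the contradiction step: making the "infinitely many excursions across a fixed band" rigorous (choosing the band endpoints via a rational $\eta$ so the bad event is a countable union, defining the stopping-time pairs $u_k < v_k$ carefully, and ensuring the Cauchy–Schwarz split is applied on genuinely disjoint index blocks so the a.s.-summable tail is actually used). The analytic inputs — Lipschitzness for the one-step bound, \Cref{p: gradient properties} for the second-moment control, Robbins–Siegmund for a.s.\ summability, and $C^2<\infty$ from \Cref{thm: SAF convergence decreasing} — are all in hand; the work is in assembling them so the excursion argument closes cleanly. A secondary point worth checking is that $\varepsilon>0$ is genuinely needed (it is: without it $\nabla_z \q L_0$ is only defined as a limit and is not Lipschitz, so the one-step increment bound fails).
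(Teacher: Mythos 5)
Your overall scaffolding (Lipschitz one-step control, $\liminf = 0$ from \Cref{col: min and liminf equivalence}, $C^2 < \infty$ from \Cref{thm: SAF convergence decreasing}) is sound, but the excursion argument has a genuine gap at its key quantitative step. After Cauchy--Schwarz you need the factor $\sum_{t=u}^{v-1}\mu_t \norm{g_\varepsilon(z^t)}_2^2$ to be a tail of an a.s.\ convergent series, i.e.\ you need $\sum_t \mu_t \norm{g_\varepsilon(z^t)}_2^2 < \infty$ a.s.\ (one power of $\mu_t$). What your Robbins--Siegmund step actually delivers is $\sum_t \mu_t^2 \norm{g_\varepsilon(z^t)}_2^2 < \infty$ a.s., and the stronger statement is in general false: by \Cref{p: gradient properties}, $\bb E[\,\norm{g_\varepsilon(z^t)}_2^2\,|\,\q F_t\,]$ carries the floor $\alpha(\q L_\varepsilon(z^t)-\q L_\varepsilon^{\inf})+\delta$, which does not vanish even when the gradient does, and since $\sum_t \mu_t = \infty$ the series $\sum_t \mu_t\norm{g_\varepsilon(z^t)}_2^2$ typically diverges. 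With only the $\mu_t^2$-weighted summability, the triangle-inequality bound $|\xi_{t+1}-\xi_t|\le L\mu_t\norm{g_\varepsilon(z^t)}_2$ summed over an excursion cannot be closed: Cauchy--Schwarz then produces a factor counting the number of steps in the excursion, which is unbounded, so no contradiction with $C^2<\infty$ follows.

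The repair requires exploiting cancellation in the noise rather than bounding it by absolute values: decompose $g_\varepsilon(z^t) = \nabla_z \q L_\varepsilon(z^t) + q^t$ with $q^t := g_\varepsilon(z^t) - \nabla_z \q L_\varepsilon(z^t)$, so the displacement over a block is at most $L\sum_s \mu_s\norm{\nabla_z\q L_\varepsilon(z^s)}_2 + L\,\norm{\sum_s \mu_s q^s}_2$. The first sum is controlled by $C^2<\infty$ (on an excursion $\norm{\nabla_z\q L_\varepsilon(z^s)}_2\ge \eta$, so $\mu_s\norm{\nabla_z\q L_\varepsilon(z^s)}_2 \le \eta^{-1}\mu_s\norm{\nabla_z\q L_\varepsilon(z^s)}_2^2$), while the second is a Cauchy tail of the series $\sum_t \mu_t q^t$, whose a.s.\ convergence must be proved by an $L^2$-bounded martingale argument using $\sum_t\mu_t^2 \bb E\norm{q^t}_2^2 < \infty$. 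This is exactly the route the paper takes: it invokes \Cref{l: convergence to 0} (Orabona's lemma), whose increment hypothesis is phrased in terms of $\sum_s\mu_s\xi_s$ plus $\norm{\sum_s\mu_s q^s}_2$, and then verifies the missing ingredient by constructing the martingale $M_t=\sum_{s=0}^{t-1}\mu_s q^s$, bounding $\bb E\norm{M_t}_2^2$ via \Cref{p: gradient properties} and the conclusions of \Cref{thm: SAF convergence decreasing expectation}, and applying the martingale convergence theorem componentwise. So your plan is salvageable, but only after replacing the $\norm{g_\varepsilon(z^t)}_2$-based increment bookkeeping by this gradient-plus-martingale-noise split; as written, the central summability claim is unsupported and the contradiction does not go through. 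Your closing observation that $\varepsilon>0$ is genuinely needed for the Lipschitz bound is correct and matches the paper.
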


We note that there are versions of Amplitude Flow and Stochastic Amplitude Flow, which include a momentum term \cite{Xu.2018, Xian.2022} or additional regularization terms \cite{Filbir.2022, Gao.2022}. Their convergence properties can be established in a similar manner based on \cite{Liu.2022}.  

\begin{remark}\label{eq: Polyak-Lojasiewicz}
The last remark regarding convergence analysis of \ref{eq: SAF} is about the Polyak-{\L}ojasiewicz condition, a conceptual extension of strong convexity for nonconvex functions. Often \cite{Khaled.2020,Patel.2022}, convergence of gradient descent is studied under the Polyak-{\L}ojasiewicz condition
\[
\tfrac{1}{2} \norm{\nabla_z \q L_\varepsilon(z)}_2^2 \ge \eta (\q L_\varepsilon(z)  - \q L_\varepsilon^{\inf} ) \quad \text{ for all } z \in \bb C^d,
\]
with $\eta >0$. In most cases, this condition does not hold for $\q L_\varepsilon$. Consider $z=(0, \ldots, 0)^T$, then we have
\[ 
\norm{\nabla_z \q L_\varepsilon(z)}_2^2 
= \norm{A^*(A z - \sqrt{y + \varepsilon} \circ A z / \sqrt{ |Az|^2 + \varepsilon}  )}_2^2
= 0,
\]
and 
\[
\q L_\varepsilon(z)  - \q L_\varepsilon^{\inf}
= \norm{ \sqrt{|Az|^2 + \varepsilon} - \sqrt{y + \varepsilon} }_2^2  - \q L_\varepsilon^{\inf} 
= \norm{ \sqrt{\varepsilon} - \sqrt{y + \varepsilon} }_2^2  - \q L_\varepsilon^{\inf},
\]  
which is nonzero unless the zero vector is a global minimizer of $\q L_\varepsilon$. In particular, if $z \mapsto Az$ is injective and noise is absent, then the Polyak-{\L}ojasiewicz condition only holds if the true object $x$ is the zero vector. 
\end{remark}

In the following, we address two other algorithms in the literature, which can be seen as special cases of \ref{eq: SAF}. 

\subsection{The Kaczmarz method}\label{sec: kaczmarz}

The first method is the Kaczmarz method for phase retrieval. It was originally designed for solving linear systems \cite{Kaczmarz.1937, Strohmer.2008} and was recently extended to the phase retrieval problem \cite{Wei.2015, Wang.2017, Tan.2019, Romer.2021, Zhang.2021}. The Kaczmarz method is based on the idea that in each iteration a single measurement is selected (randomly) and the object is updated to fit this measurement. To be precise, the iteration of the Kaczmarz method is given by
\begin{equation}\label{eq: Kaczmarz}
z^{t+1} = z^t + \frac{(A_{(r^t)})^*}{\norm{A_{(r^t)}}_2^2} \left[ \sgn( (A z^t)_{r^t} ) y_{r^t} - (A z^t)_{r^t} \right], \quad t \ge 0,
\end{equation}
where $A_{(r)}$ denotes the $r$-th row of the matrix $A$ and index $r^t$ is drawn at random from some distribution $p \in (0,1)^m$. Then, the new iterate satisfies $| |(Az^{t+1})_{r^t}| - \sqrt{y_{r^t}}|= 0$. 

Similarly to \ref{eq: SAF}, the Kaczmarz method for phase retrieval was analyzed for random matrices $A$ \cite{Tan.2019, Zhang.2021}, where its linear convergence was derived. Furthermore, the Kaczmarz method is shown to converge to a neighborhood of $x$ in the noiseless case at a linear rate, however, the radius of this neighborhood is rather large \cite{Wei.2015}.  

In order to present the Kaczmarz iteration as \ref{eq: SAF}, we first introduce a partition $A_r$ of $A$ and corresponding vectors $y^r$. Let $R = m$ and
consider the matrices $A_r \in \bb C^{1 \times d},$ $r \in [m]$ corresponding to a single row $A_{(r)}$. Then, $y^r \in \bb C^1$ is a vector with a sole entry equal to $y_r$. Consequently, by \eqref{eq: generalized gradient}, we have
\[
\nabla_z \q L_{0,r}(z) = A_r^* \left[ A_r z - \sgn( A_r z) \circ \sqrt{y^{r}} \right]
= (A_{(r)})^* \left[ (A z)_{r} - \sgn( (A z)_{r} ) \sqrt{y_{r}} \right].
\]
Thus, iteration \eqref{eq: Kaczmarz} transforms into
\[
z^{t+1}
=z^t - \frac{p_{r_t}}{p_{r_t} \norm{A_{(r^t)}}_2^2} \nabla_z \q L_{0,r_t}(z^t)
= z^t - \mu_t g_0(z^t),
\]
where the stochastic gradient uses a single index $r_t$, i.e., $K=1$, and the step size is set to $\mu_t = p_{r^t}/ \norm{A_{(r^t)}}_2^2$.  
 
The choice of probabilities in the literature is either uniform, $p_r =1/m$, or proportional to $\norm{A_{(r^t)}}_2^2$. The latter choice is sometimes called variance-reducing probabilities, as they minimize the bound on the norm of the gradient provided by \Cref{p: gradient properties}. More precisely, if $p_r = \norm{A_{(r^t)}}_2^2 / \norm{A}_F^2$ we have
\begin{equation}\label{eq: alpha kaczmarz}
\alpha = \max_{r \in [m]} \frac{\norm{A}_F^2 \norm{A_r}^2 }{\norm{A_{(r)}}_2^2}
= \norm{A}_F^2.
\end{equation}
This choice of probabilities also leads to a constant step size $\mu_t = 1 / \norm{A}_F^2$. However, \Cref{thm: SAF convergence constant} only provides a weak upper bound on the minimal expected norm of the gradient.
\begin{corollary}\label{col: Kaczmarz}
Assume that $\q L_0^{\inf} = 0$. Consider a sequence $\{z_t \}_{t=0,\ldots, T}$ determined by \eqref{eq: Kaczmarz} with an arbitrary starting point $z^0 \in \bb C^{d}$ and probabilities $p_r = \norm{A_{(r)}}_2^2 / \norm{A}_F^2,$ $r \in [m]$. Then, for 
\[
T = \left \lceil \norm{A}_F^2 / 4 \norm{A}^2 \right \rceil,
\]
the expected norms of the gradients satisfy
\[
\min_{t = 0, \ldots, T-1} \bb E \norm{\nabla_z \q L_\varepsilon(z^t)}_2 \le 4 \norm{A} \sqrt{ \q L_0(z^0) }.
\]
\end{corollary}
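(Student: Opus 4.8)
The plan is to recognize \eqref{eq: Kaczmarz} as an instance of \ref{eq: SAF} (with $\varepsilon = 0$, $K = 1$, the single-index stochastic gradient, sampling distribution $p_r = \norm{A_{(r)}}_2^2/\norm{A}_F^2$, and the constant step size $\mu_t = p_{r^t}/\norm{A_{(r^t)}}_2^2 = 1/\norm{A}_F^2 =: \mu$, exactly as derived in the text preceding the corollary) and then invoke \Cref{thm: SAF convergence constant}. First I would pin down the three constants of \eqref{eq: abc params}. Since $K = 1$, we get $\beta = 1 - 1/K = 0$; for this choice of $p$, equation \eqref{eq: alpha kaczmarz} gives $\alpha = \norm{A}_F^2$; and for $\delta$ I note that each block loss is $\q L_{0,r}(z) = (|(Az)_r| - \sqrt{y_r})^2$, whose infimum is $0$ because $A_{(r)} \neq 0$ (as $p_r > 0$), so $\q L_{0,r}^{\inf} = 0$. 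Combined with the hypothesis $\q L_0^{\inf} = 0$ and the nonnegativity of $\delta$ from \Cref{p: gradient properties}, this forces $0 \le \delta = \alpha[\q L_0^{\inf} - \sum_{r}\q L_{0,r}^{\inf}] = 0$.

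Next I would check that $\mu$ and $T := \lceil \norm{A}_F^2/(4\norm{A}^2) \rceil$ meet the hypotheses of \Cref{thm: SAF convergence constant} for a well-chosen $\gamma$. Because $\beta = \delta = 0$, the constraints on $\mu$ involving $\beta$ and $\delta$ are vacuous, so the only step-size condition is $\mu \le (\sqrt{\alpha T}\,\norm{A})^{-1}$; substituting $\mu = \norm{A}_F^{-2}$ and $\alpha = \norm{A}_F^2$ this is equivalent to $T \le \norm{A}_F^2/\norm{A}^2$. Writing $\rho := \norm{A}_F^2/\norm{A}^2$, which is $\ge 1$ since the Frobenius norm dominates the spectral norm, I must verify $\lceil \rho/4 \rceil \le \rho$: for $\rho \ge 4/3$ this follows from $\lceil \rho/4 \rceil \le \rho/4 + 1 \le \rho$, and for $1 \le \rho < 4/3$ one has $\lceil \rho/4 \rceil = 1 \le \rho$. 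Hence the step-size condition holds.

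Then I would exploit the fact that $\gamma$ is a free parameter in \Cref{thm: SAF convergence constant}, tuning it to the prescribed $T$ (not the other way around): choose $\gamma$ so that the iteration-count condition $T \ge 4[\q L_0(z^0) - \q L_0^{\inf}]/(\gamma^2\mu) = 4\q L_0(z^0)\norm{A}_F^2/\gamma^2$ holds with equality, i.e. $\gamma := 2\norm{A}_F\sqrt{\q L_0(z^0)}/\sqrt{T}$. Since $T \ge \rho/4 = \norm{A}_F^2/(4\norm{A}^2)$, we have $1/\sqrt{T} \le 2\norm{A}/\norm{A}_F$, so $\gamma \le 4\norm{A}\sqrt{\q L_0(z^0)}$. \Cref{thm: SAF convergence constant} then yields $\min_{t = 0,\ldots,T-1}\bb E\norm{\nabla_z \q L_0(z^t)}_2 \le \gamma \le 4\norm{A}\sqrt{\q L_0(z^0)}$, which is the assertion (the subscript $\varepsilon$ in the statement should read $0$).

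The argument is essentially bookkeeping; the only mildly delicate steps are the elementary ceiling estimate $\lceil \rho/4\rceil \le \rho$ for $\rho \ge 1$ (resting on $\norm{A}_F \ge \norm{A}$) and the realization that one should solve for $\gamma$ given the fixed $T$ and $\mu$ rather than the reverse. I expect no real obstacle beyond carefully handling the degenerate constants $\beta = \delta = 0$ when reading off the hypotheses of \Cref{thm: SAF convergence constant}.
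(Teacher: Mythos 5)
Your proposal is correct and takes essentially the same route as the paper: recognize \eqref{eq: Kaczmarz} as \ref{eq: SAF} with $K=1$, $\alpha = \norm{A}_F^2$, $\beta = \delta = 0$ and constant step size $\mu = 1/\norm{A}_F^2$, then verify the two hypotheses of \Cref{thm: SAF convergence constant} via $\norm{A}_F \ge \norm{A}$ and the ceiling estimate (the paper simply fixes $\gamma = 4\norm{A}\sqrt{\q L_0(z^0)}$ at the outset instead of solving for $\gamma$ from $T$, which is only a bookkeeping difference). Your side remark that the subscript $\varepsilon$ in the corollary's display should be $0$ is also consistent with the paper's proof, which bounds $\min_{t}\bb E \norm{\nabla_z \q L_0(z^t)}_2$.
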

Note that assumption $\q L_0^{\inf} = 0$ is artificially introduced for simplicity of the resulting statement and, in view of \Cref{rem: abs constants}, can be lifted. Yet, the bound $4 \norm{A} \sqrt{ \q L_0(z^0) }$ on the minimal norm of the gradients is rather large and does not imply that any of the iterates is close to a critical point. While we are not able to provide a better convergence analysis for $\mu_t = 1/\norm{A}_F^2$, a version of the Kaczmarz method with a smaller constant or decaying $\mu_t$ can be considered. In this case, the results of \Cref{thm: SAF convergence constant} and \Cref{thm: SAF convergence decreasing} provide stronger convergence guarantees. 

\subsection{STFT phase retrieval and Ptychographic Iterative Engine}\label{sec: PIE}

The second algorithm we would like to consider is the so-called Ptychographic Iterative Engine (\ref{eq: PIE}) \cite{Rodenburg.2004}. It was devised specifically for the phase retrieval problem with the (subsampled) short-time Fourier transform (STFT) matrix $A$. Let $w \in \bb C^d$ and consider measurements of the form
\begin{equation}\label{eq: ptycho meas}
y^r = |F[ S_{s_r} w \circ  x]|^2  + n, \quad r \in [R],
\end{equation}
where $F$ is the discrete Fourier transform \eqref{eq: DFT}, $S_s,$ $s \in [d]$ denote circular shift operators \eqref{eq: shift op} and $s_1, \ldots, s_R \in [d]$ denote $1 \le R \le d$ unique shift positions of the vector $w$. 

The measurements \eqref{eq: ptycho meas} arise in ptychography \cite{Pfeiffer.2018} and acoustic applications \cite{Gerkmann.2015}. In fact, they can be seen as an instance of the phase retrieval problem \eqref{eq: meas}. For this, we combine all measurements as a vector $y = ( (y^1)^T, \ldots, (y^R)^T )^{T}$ and construct a row-block measurement matrix $A$ as
\begin{equation}\label{eq: STFT matrix}
A := 
\begin{bmatrix}
A_1 \\
\vdots\\
A_R 
\end{bmatrix}
:=
\begin{bmatrix}
F \diag(S_{s_1} w) \\
\vdots \\
F \diag(S_{s_R} w) 
\end{bmatrix}.
\end{equation}
If $\{ s_1, \ldots, s_R \} = [d]$, the matrix $A$ corresponds to the discrete STFT and in all other cases it is a subsampled STFT matrix. 

One of the methods for recovery from STFT measurements \eqref{eq: ptycho meas} is Ptychographic Iterative Engine. Given an initial guess $z^0$ it constructs the $t$-th iterate $z^{t}$, $t \ge 0$, by performing the following steps. \\
\medskip
\begin{algorithm}[H]
\caption{PIE iteration, version of \cite{Rodenburg.2004, Maiden.2009}}
\label{alg: PIE iteration}
\SetAlgoLined
\SetKwInOut{Input}{Input}
\SetKwInOut{Output}{Output}
\Input{Ptychographic measurements $y^r$, $r \in [R]$ as in \eqref{eq: ptycho meas}, previous object iterate $z^t \in \bb C^d$, parameter $\alpha_t >0$. }
\Output{$z^{t+1} \in \bb C^d$.}
1. Select a shift position $r^t \in [R]$ and denote $s_{r^t}$ by $s_t$.\\
2. Construct an exit wave $\psi = S_{-s_{t}} z^t \circ w$.\\
3. Compute its Fourier transform $\Psi = F \psi$.\\
4. Correct the magnitudes of $\Psi$ as $\Psi' = \sqrt{ y^{r^t} } \circ \sgn \Psi$.\\
5. Find an exit wave $\psi'$ corresponding to $\Psi'$ via $\psi' = F^{-1} \Psi' $. \\
6. Return $z^{t+1} = z^t + \frac{\alpha_t}{\norm{w}_\infty^2}  \diag( \conj{ S_{s_{t}} w}) [\psi' - \psi]$.\\
\end{algorithm}
\medskip

In the literature, two ways of choosing the shift positions $r^t$ are considered. Originally, in \cite{Rodenburg.2004}, the shift $r^t$ was selected such that regions corresponding to $r^{t-1}$ and $r^t$ overlap. Later, in \cite{Maiden.2009, Maiden.2017} indices $r^t$ are looping through the set $[R]$, which is randomly shuffled every loop. The parameter $\alpha_t$ is generally set to a constant value, e.g., $\alpha_t = 0.05$.


There are several interpretations of the \ref{eq: PIE} iteration. The first states that the iteration of \ref{eq: PIE} computes the measurements corresponding to the current shift position $r$, and adjusts the magnitudes if they do not agree with the measurements $y^{r}$. Then, the algorithm moves from the previous position $z^{t}$ in the direction of the object corresponding to the corrected measurements, which gives the new iterate $z^{t+1}$. Therefore, $\alpha_t$ plays a role of step size. All steps of \Cref{alg: PIE iteration} combined into one provide the following update rule
\begin{equation}\label{eq: PIE}\tag{PIE}
z^{t+1} = z^{t} + \frac{\alpha_t  \diag(\conj{S_{s_{t}} w}) }{\norm{w}_\infty^2} \left[ F^{-1} \diag\left( \frac{ \sqrt{ y^{r^t} }}{ |F [S_{s_{t}} w \circ z^t]|} \right) F   - I_d\right] (S_{s_{t}} w \circ z^t).
\end{equation}

In \cite{Thibault.2013}, the authors note that in this form the new iterate of \ref{eq: PIE} is the global minimizer of
\[
z^{t+1} = \argmin_{z \in \bb C^d}  \norm{ S_{s_{t}} w \circ z -  \psi'}_2^2 + \norm{ \left[ \frac{\norm{w}_\infty^2}{\alpha_t} I_d - \diag(|S_{s_{t}} w|^2) \right] (z - z^{t}) }_2^2.
\]  
where $\psi'$ is a result of step 5 in \Cref{alg: PIE iteration}.This is reminiscent of the proximal mapping \cite[Chapter 6]{Beck.2017}. Alternatively, the iteration of the \ref{eq: PIE} algorithm is the gradient descent step \cite{Maiden.2017} for the loss function
\[
\norm{ S_{s_{t}} w \circ z -  \psi'}_2^2
\]  
with the step sizes $\mu_t = \alpha_t / \norm{w}_\infty^2$. Based on the concepts from \Cref{sec: SAF}, we are able to show that \ref{eq: PIE} is an instance of \ref{eq: SAF}.
\begin{theorem}\label{thm: PIE as stochastic gradient}
Let $A$ be the measurement matrix \eqref{eq: STFT matrix} corresponding to the ptychographic measurements \eqref{eq: ptycho meas}. The iteration of \ref{eq: PIE} is the iteration of \ref{eq: SAF} with $\varepsilon = 0$ and step size 
\[
\mu_t  = \frac{\alpha_t p_{r^t} }{d \norm{w}_\infty^2}
= \frac{\alpha_t p_{r^t}}{\norm{A_{r^t}}^2}.
\] 
The stochastic gradient $g_0$ of $\q L_0$ is constructed based on the partition $A_1, \ldots, A_R$ as in \eqref{eq: STFT matrix}. It samples $K=1$ index from a distribution $p \in \bb R^R$, where $p_r$ is a probability to chose index $r$ as $r^t$ in the iteration of \ref{eq: PIE}, $r \in [R]$.
\end{theorem}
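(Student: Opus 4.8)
The plan is to collapse the six steps of \Cref{alg: PIE iteration} into the closed form \eqref{eq: PIE}, and then to show this closed form is literally $z^{t+1} = z^t - \mu_t g_0(z^t)$ once the stochastic gradient $g_0$ is built from the block partition \eqref{eq: STFT matrix} with $K = 1$. The first move is a purely algebraic rewriting of the bracketed operator in \eqref{eq: PIE}: writing $D_r := \diag\!\big(\sqrt{y^r}\,/\,|F(S_{s_r} w \circ z)|\big)$ --- with the division read through the convention $\sgn(0)=0$, so that a vanishing entry of $|F(S_{s_r}w\circ z)|$ yields a vanishing entry of $D_r$, exactly as in step~4 of \Cref{alg: PIE iteration} where $\Psi' = \sqrt{y^{r^t}} \circ \sgn\Psi$ --- and using $F^{-1}F = I_d$, one has
\[
F^{-1} \diag\!\left( \frac{\sqrt{y^{r^t}}}{|F[S_{s_t} w \circ z^t]|} \right) F - I_d = F^{-1}\big(D_{r^t} - I_d\big) F ,
\]
so the PIE increment equals $\tfrac{\alpha_t}{\norm{w}_\infty^2}\,\diag(\conj{S_{s_t} w})\,F^{-1}(D_{r^t} - I_d)\,F\,(S_{s_t} w \circ z^t)$, with $s_t = s_{r^t}$.

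Next I would compute the block gradient $\nabla_z \q L_{0,r}$ for $A_r = F\diag(S_{s_r}w)$. Since $A_r z = F(S_{s_r} w \circ z)$, we get $|A_r z| = |F(S_{s_r} w \circ z)|$ and $\sqrt{y^r}\circ\sgn(A_r z) = D_r\,F(S_{s_r} w \circ z)$; moreover $A_r^* = \diag(\conj{S_{s_r}w})F^*$, and $F^* = dF^{-1}$ by \eqref{eq: fourier prop}, hence $A_r^* = d\,\diag(\conj{S_{s_r}w})F^{-1}$. Substituting into \eqref{eq: generalized gradient},
\[
\nabla_z \q L_{0,r}(z) = A_r^*\big(I_d - D_r\big)F(S_{s_r}w\circ z) = -\,d\,\diag(\conj{S_{s_r}w})\,F^{-1}\big(D_r - I_d\big)\,F(S_{s_r}w\circ z).
\]
Comparing this at $z = z^t$, $r = r^t$, with the rewritten PIE increment shows that \eqref{eq: PIE} is exactly $z^{t+1} = z^t - \tfrac{\mu_t}{p_{r^t}}\nabla_z\q L_{0,r^t}(z^t)$, i.e.\ $z^{t+1} = z^t - \mu_t g_0(z^t)$ for the stochastic gradient \eqref{eq: stochastic gradient} with $K=1$ sampling the single index $r^t$ from $p$, provided $\tfrac{\mu_t d}{p_{r^t}} = \tfrac{\alpha_t}{\norm{w}_\infty^2}$, that is $\mu_t = \tfrac{\alpha_t p_{r^t}}{d\norm{w}_\infty^2}$. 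Here $p \in \bb R^R$ is simply the law of the index chosen in step~1 of \Cref{alg: PIE iteration}.

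Finally I would identify $d\norm{w}_\infty^2$ with $\norm{A_{r^t}}^2$: since $\tfrac1{\sqrt d}F$ is unitary (left multiplication by it preserves the spectral norm) and $S_{s_r}$ merely permutes the entries of $w$, $\norm{A_r} = \norm{F\diag(S_{s_r}w)} = \sqrt d\,\norm{\diag(S_{s_r}w)} = \sqrt d\,\norm{S_{s_r}w}_\infty = \sqrt d\,\norm{w}_\infty$, so $\norm{A_r}^2 = d\norm{w}_\infty^2$ and the two stated forms of $\mu_t$ agree. I do not expect a genuine obstacle here --- the argument is bookkeeping with the Fourier identities \eqref{eq: fourier prop} and unitarity. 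The only point that needs care is the treatment of zeros: the division notation in \eqref{eq: PIE} must be interpreted via the $\sgn$ convention underlying the generalized Wirtinger gradient \eqref{eq: generalized gradient}, and one should check this reading is faithful to step~4 of \Cref{alg: PIE iteration}; with that in place the identification is exact.
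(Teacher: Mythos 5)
Your proposal is correct and follows essentially the same route as the paper: rewrite the bracketed operator in \eqref{eq: PIE} using $F^{-1}=\tfrac1d F^*$ so that the update becomes $\tfrac{\alpha_t}{d\norm{w}_\infty^2}A_{r^t}^*\bigl(A_{r^t}z^t-\sqrt{y^{r^t}}\circ\sgn(A_{r^t}z^t)\bigr)$, identify this with $\nabla_z\q L_{0,r^t}(z^t)$ and hence with $p_{r^t}g_0(z^t)$ for $K=1$, and finish by computing $\norm{A_r}^2=d\norm{w}_\infty^2$. The only cosmetic differences are that you verify the norm identity via unitarity of $\tfrac{1}{\sqrt d}F$ rather than via $\norm{A_r^*A_r}$, and that you make the $\sgn(0)=0$ reading of the division explicit, which the paper leaves implicit.
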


Since the norms $\norm{A_{r}}$ are all equal, the uniform sampling $p_r = 1/R$ will minimize $\alpha$ in \eqref{eq: abc params} for \ref{eq: PIE}. Then, the stochastic gradient descent representation of the \ref{eq: PIE} algorithm allows us to derive its convergence if $\alpha_t$ decays with $t$.  

\begin{corollary}\label{col: PIE convergence}
Consider a sequence $\{ z^t \}_{t \ge 0}$ determined by \ref{eq: PIE} with an arbitrary starting point $z^0 \in \bb C^d$ and uniform probabilities $p_r = 1/R$. Let $\alpha_t = \alpha / (1+t)^{1/2 + \theta}$ for some $\alpha >0$ and $0<\theta <1/2$. Then, the sequence $\{ \q L(z^t) \}_{t \ge 0}$ converges almost surely and we have  
\[
\min_{t = 0, \ldots, T-1} \norm{ \nabla_z \q L(z^{t}) }_2 = o ( T^{-(1/2-\theta)} ) \text{ as } T \to \infty, \ \text{a.s.}
\]
\end{corollary}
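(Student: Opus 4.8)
The plan is to deduce \Cref{col: PIE convergence} from the almost-sure theory of \Cref{sec: SAF} after specialising it to the ptychographic setting. First I would use \Cref{thm: PIE as stochastic gradient} to recast the \ref{eq: PIE} iteration as an \ref{eq: SAF} iteration with $\varepsilon = 0$, sample size $K = 1$, the block partition \eqref{eq: STFT matrix}, and step size $\mu_t = \alpha_t p_{r^t}/\norm{A_{r^t}}^2$. The key reduction is that every block satisfies $A_r^* A_r = d\,\diag(|S_{s_r} w|^2)$ (because $\tfrac{1}{\sqrt d} F$ is unitary, so $F^* F = d I_d$), whence $\norm{A_r}^2 = d \norm{w}_\infty^2$ is the same for every $r$. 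Combined with the uniform law $p_r = 1/R$, this turns the step size into the deterministic sequence
\[
\mu_t = \frac{\alpha_t p_{r^t}}{\norm{A_{r^t}}^2} = \frac{\alpha}{R d \norm{w}_\infty^2 (1+t)^{1/2+\theta}} =: \frac{\hat\mu}{(1+t)^{1/2+\theta}},
\]
so the randomness enters only through which block is sampled, not through $\mu_t$. Moreover, since $K = 1$, the constant $\beta = 1 - 1/K$ from \eqref{eq: abc params} vanishes, so the admissibility requirement $\beta \norm{A}^2 \mu_t \le 1$ holds trivially for every $t$.

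Next I would verify the remaining hypotheses of the almost-sure results. Because $\tfrac12 < \tfrac12 + \theta < 1$, the series $\sum_t \mu_t$ diverges while $\sum_t \mu_t^2 < \infty$, which is \eqref{eq: step conditions}; the sequence is plainly decreasing, and the estimate $\sum_{s=0}^{t-1}\mu_s \asymp t^{1/2-\theta}$ yields $\mu_t / \sum_{s=0}^{t-1}\mu_s \asymp t^{-1}$, whose divergence is \eqref{eq: step conditions 2}. Thus $\mu_t$ is exactly the schedule of \Cref{cor: step size} with constant $\hat\mu$, and the almost-sure \Cref{thm: SAF convergence decreasing} applies (note $\varepsilon = 0$ is admissible there). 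Its conclusions give immediately that $\q L_0(z^t) = \q L(z^t)$ converges almost surely and that $C^2 = \sum_t \mu_t \norm{\nabla_z \q L(z^t)}_2^2 < \infty$ a.s., which is the first assertion of the corollary.

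For the rate I would specialise the bound $\min_{t = 0,\ldots,T-1} \norm{\nabla_z \q L(z^t)}_2 = o\bigl([\sum_{t=0}^{T-1}\mu_t]^{-1/2}\bigr)$ of \Cref{thm: SAF convergence decreasing} using the integral comparison $\sum_{t=0}^{T-1}\mu_t = \hat\mu \sum_{k=1}^{T} k^{-(1/2+\theta)} \asymp \tfrac{\hat\mu}{1/2-\theta}\, T^{1/2-\theta}$; this is the computation underlying the almost-sure analogue of \Cref{cor: step size}. The point that requires care — and what I regard as the main obstacle — is the bookkeeping of the square on the gradient norm, since this is where the stated exponent must be pinned down. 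The schedule $\mu_t \propto (1+t)^{-(1/2+\theta)}$ makes the machinery return $\sum_{t=0}^{T-1}\mu_t \asymp T^{1/2-\theta}$, so the quantity carrying the displayed exponent $(1/2-\theta)$ is the \emph{squared} minimum, $\min_t \norm{\nabla_z \q L(z^t)}_2^2 = o(T^{-(1/2-\theta)})$ a.s.; the unsquared minimum then obeys $\min_t \norm{\nabla_z \q L(z^t)}_2 = o(T^{-(1/4-\theta/2)})$ a.s. after taking square roots, using that $a_T = o(f_T)$ forces $a_T^{1/2} = o(f_T^{1/2})$. I would therefore make sure the final display records the rate for the gradient-norm quantity whose exponent is genuinely $(1/2-\theta)$, invoking the monotonicity of $\sqrt{\cdot}$ to pass between the two forms and to match the conclusion of \Cref{cor: step size}.
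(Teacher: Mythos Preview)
Your approach matches the paper's exactly: recast \ref{eq: PIE} as \ref{eq: SAF} via \Cref{thm: PIE as stochastic gradient}, observe that $K=1$ forces $\beta=0$ so the step-size constraint in \eqref{eq: step conditions} is vacuous, and then invoke the almost-sure analogue of \Cref{cor: step size} (i.e., \Cref{thm: SAF convergence decreasing} with the schedule $\mu_t = \hat\mu(1+t)^{-(1/2+\theta)}$). Your exponent bookkeeping is in fact more scrupulous than the paper's own one-line proof, which simply asserts that the analogue of \Cref{cor: step size} ``gives the desired convergence rate''; you are right that the machinery yields $o(T^{-(1/2-\theta)})$ for $\min_t \norm{\nabla_z \q L(z^t)}_2^2$ and only $o(T^{-(1/4-\theta/2)})$ for the unsquared norm, so the exponent displayed in the corollary properly belongs to the squared quantity.
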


Consequently, \Cref{col: PIE convergence} guarantees that eventually the norm of the gradient will become sufficiently small, which is a common stopping criteria for algorithms in practice. Note that stronger convergence guarantees such as \Cref{thm: a.s. convergence of gradient} are applicable for a version of \ref{eq: PIE} where $\q L_0$ is replaced by $\q L_\varepsilon$ with $\varepsilon >0$.

\section{Proofs}\label{sec: proofs}

\subsection{Wirtinger derivatives and proof of descent lemma}\label{sec: Wirtinger derivatives}

In the following, we review basic concepts of Wirtinger derivatives based on \cite{Hunger.2008, Bouboulis.2010}. As the loss function \eqref{eq: loss} is a real-valued function of complex variables, it is not holomorphic \cite[Proposition 4.0.1]{Hunger.2008}. However, for gradient-based optimization of such functions Wirtinger derivatives are extremely useful. 

Let $f: \bb C \to \bb C$ be a function of a complex variable $z = \alpha + i \beta$. We say that a function is differentiable (in the real sense), if it is differentiable with respect to the real part $\alpha$ and the imaginary part $\beta$. The Wirtinger derivatives of $f$ are then defined as
\[
\frac{\partial f}{\partial z} := \frac{1}{2}\frac{\partial f}{\partial \alpha}  - \frac{i}{2}\frac{\partial f}{\partial \beta},
\quad 
\frac{\partial f}{\partial \bar{z} } := \frac{1}{2}\frac{\partial f}{\partial \alpha}  + \frac{i}{2}\frac{\partial f}{\partial \beta}.
\]
Wirtinger derivatives can be seen as an alternative parametrization of a function $f$ with respect to conjugate coordinates $z$ and $\conj z$. Since $\conj z$ is antiholomorphic in $z$, we have that $\tfrac{\partial z}{\partial \conj z}=0$ and vice versa. Thus, $z$ is treated as a constant when differentiating with respect to $\conj z$ and, analogously, $\conj z$ is treated as a constant when differentiating with respect to $z$. 

The definition of Wirtinger derivatives is a linear transformation of real and imaginary derivatives. Hence, all standard results from real analysis such as arithmetical operations and the chain rule remain true for Wirtinger derivatives. Furthermore, the conjugation rule applies
\begin{equation}\label{eq: conj rule}
\overline{\frac{\partial f}{\partial z} } =
\frac{\partial \bar f}{\partial \bar z}
\quad
\text{and}
\quad
\overline{\frac{\partial f}{\partial \bar z} } =
\frac{\partial \bar f}{\partial z}.
\end{equation}

In analogy to real analysis, Wirtinger derivatives for multivariate functions are defined as row vectors of derivatives. That is for $f: \bb C^{d} \to \bb C$ they are given by 
\[
\frac{\partial f}{\partial z} = \left( \frac{\partial f}{\partial z_1}, \ldots,  \frac{\partial f}{\partial z_d} \right)
\quad
\text{and}
\quad
\frac{\partial f}{\partial \bar{z} } = \left( \frac{\partial f}{\partial \bar z_1}, \ldots,  \frac{\partial f}{\partial \bar z_d} \right).
\] 

Turning to real-valued functions $f: \bb C^{d} \to \bb R$, the differential of $f$ can be rewritten as
\[
d f = \sum_{j \in [d]}\left[ \frac{\partial f}{\partial \alpha_j} d \alpha_j + \frac{\partial f}{\partial \beta_j} d \beta_j \right]
= \sum_{j \in [d]}\left[ \frac{\partial f}{\partial z_j} d z_j + \frac{\partial f}{\partial \conj z_j} d \conj z_j \right]
= \frac{\partial f}{\partial z} d z + \frac{\partial f}{\partial \conj z} d \conj z 
= 2 \RE\left[ \frac{\partial f}{\partial z} d z \right],
\]
where \eqref{eq: conj rule} and real-valuedness of $f$ were used in the last step. Consequently, the direction of the steepest ascend $d z$ is aligned with the conjugate transpose of $\frac{\partial f}{\partial z}$, which is why Wirtinger gradient in direction of $z$ is defined as
\[
\nabla_z f := \left(\frac{\partial f}{\partial z} \right)^* = \left(\frac{\partial f}{\partial \conj z} \right)^T.
\]
Note that the second equality once again follows from the conjugation rule \eqref{eq: conj rule} and real-valuedness of $f$. Analogously, the gradient in direction of $\conj z$ is given by $\nabla_{\conj z} f := \left(\frac{\partial f}{\partial \conj z} \right)^* = \conj{\nabla_z f}$. 

For a twice continuously differentiable function $f$ its Wirtinger Hessian matrix is defined as
\begin{equation}\label{eq: Hessian def}
\nabla^2 f = 
{
\renewcommand\arraystretch{1.75}
\begin{bmatrix}
\frac{\partial}{\partial z}  \nabla_z f & \frac{\partial}{\partial \conj z}  \nabla_z f \\
\frac{\partial}{\partial z}  \nabla_{\conj{z}} f & \frac{\partial}{\partial \conj{z}}  \nabla_{\conj{z}} f
\end{bmatrix}
}
= 
\begin{bmatrix}
\nabla_z f \\
\nabla_{\conj{z}} f
\end{bmatrix}
\begin{bmatrix}
\frac{\partial}{\partial z} & \frac{\partial}{\partial \conj z}
\end{bmatrix}
=
\begin{bmatrix}
0 & I_d \\
I_d & 0
\end{bmatrix}
{
\renewcommand\arraystretch{1.25}
\begin{bmatrix}
(\frac{\partial}{\partial z} f)^T \\ 
(\frac{\partial}{\partial \conj z} f)^T
\end{bmatrix}
}
\begin{bmatrix}
\frac{\partial}{\partial z} & \frac{\partial}{\partial \conj z}
\end{bmatrix}.
\end{equation}
Then, for $z,v \in \bb C^d$ the Taylor expansion up to the second order gives
\begin{equation}\label{eq: Taylor}
f(z+v) = f(z) + 
\begin{bmatrix}
\nabla_z f \\
\nabla_z f
\end{bmatrix}^*
\begin{bmatrix}
v \\
\bar v
\end{bmatrix}
+
\begin{bmatrix}
v \\
\bar v
\end{bmatrix}^*
\int_0^1 
(1 - s)
\nabla^2 f(z + s v)
d s
\begin{bmatrix}
v \\
\bar v
\end{bmatrix}.
\end{equation} 
For $\q L_\varepsilon$, the second order term was computed and bounded from above in \cite{Xu.2018}. In addition, we derive a lower bound in the next lemma. 
\begin{lemma}\label{l: Hessian bounds}
Let $\varepsilon >0$. For any $z,u \in \bb C^d$ the Hessian of $\q L_\varepsilon$ satisfies
\[
- 2 (\norm{ y + \varepsilon }_\infty^{1/2} \varepsilon^{-1/2} -1) \norm{A}^2 \norm{u}_2^2
\le 
\begin{bmatrix}
u \\
\bar u
\end{bmatrix}^*
\nabla^2 \q L_\varepsilon(z)
\begin{bmatrix}
u \\
\bar u
\end{bmatrix}
\le 2 \norm{A}^2 \norm{u}_2^2.
\]
\end{lemma}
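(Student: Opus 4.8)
The plan is to compute the Wirtinger Hessian of $\q L_\varepsilon$ explicitly and then estimate the quadratic form $\begin{bmatrix} u \\ \bar u \end{bmatrix}^* \nabla^2 \q L_\varepsilon(z) \begin{bmatrix} u \\ \bar u \end{bmatrix}$ entry by entry. Since $\q L_\varepsilon(z) = \norm{\sqrt{|Az|^2 + \varepsilon} - \sqrt{y+\varepsilon}}_2^2 = \sum_j \big( \sqrt{|(Az)_j|^2 + \varepsilon} - \sqrt{y_j + \varepsilon}\big)^2$, I would first reduce to the scalar building block $\phi(c) = \big( \sqrt{|c|^2 + \varepsilon} - \sqrt{y_j + \varepsilon}\big)^2$ as a function of $c \in \bb C$, compute its $2\times 2$ Wirtinger Hessian in the variables $(c, \bar c)$, and then assemble the full Hessian via the chain rule using $c = (Az)_j$, i.e. $\partial c/\partial z = A_{(j)}$ (a row of $A$) and $\partial c / \partial \bar z = 0$. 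Concretely, writing $\nabla_z \q L_\varepsilon(z) = A^*(Az - \sqrt{y+\varepsilon}\circ Az/\sqrt{|Az|^2+\varepsilon})$ from \eqref{eq: generalized gradient}'s smoothed analogue, I differentiate once more: the four blocks of $\nabla^2 \q L_\varepsilon(z)$ will be of the form $A^* D_1 A$, $A^* D_2 \bar A$ (where $\bar A$ comes from differentiating in $\bar z$), and their conjugates, with $D_1, D_2$ diagonal matrices whose entries are explicit functions of $|(Az)_j|^2$, $y_j$ and $\varepsilon$.

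Next, plugging in $\begin{bmatrix} u \\ \bar u\end{bmatrix}$, the quadratic form collapses to $\sum_j$ of a real scalar expression in $a_j := (Au)_j$, $c_j := (Az)_j$, $y_j$, $\varepsilon$; after using $\RE$ to combine the off-diagonal contributions, each summand should take the shape $\lambda_j |a_j|^2 + \mu_j \RE(\overline{\sgn(c_j)}\, a_j)^2$ or similar, where $\lambda_j, \mu_j$ are the eigenvalue-type quantities. The key elementary fact to extract is that for each $j$ the scalar form is bounded above by $2|a_j|^2$ and below by $-2(\sqrt{y_j+\varepsilon}\,\varepsilon^{-1/2} - 1)|a_j|^2$ — essentially because the relevant scalar Hessian $\nabla^2 \phi$ has eigenvalues lying in $[-2(\sqrt{y_j+\varepsilon}/\sqrt{\varepsilon} - 1),\, 2]$, which one checks by a short direct computation of the two eigenvalues of the $2\times2$ matrix (one eigenvalue involving $\varepsilon^{3/2}$ in a denominator is the source of the negative bound, the other is easily seen to be at most $2$). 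Finally, summing over $j$ and using $\sum_j |a_j|^2 = \norm{Au}_2^2 \le \norm{A}^2 \norm{u}_2^2$ together with $\max_j \sqrt{y_j+\varepsilon} \le \norm{y+\varepsilon}_\infty^{1/2}$ yields both stated bounds.

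The upper bound is already available from \cite{Xu.2018} and is the easy direction; the real work is the lower bound. The main obstacle I anticipate is the bookkeeping in the eigenvalue computation of the scalar $2\times2$ Hessian: one must be careful that the cross-terms between the $z$ and $\bar z$ derivatives (which produce the $\sgn(c_j)^2$ factors) are handled correctly, and that the worst case over the "phase" of $a_j$ relative to $c_j$ is taken — this is where the factor comes out cleanly as $\sqrt{y_j+\varepsilon}/\sqrt{\varepsilon} - 1$ rather than something messier. A clean way to organize this is to diagonalize by writing $a_j = \sgn(c_j)(\xi_j + i\eta_j)$ with $\xi_j, \eta_j$ real (valid when $c_j \neq 0$; the case $c_j = 0$ is a limit and handled separately), so the scalar form becomes diagonal in $(\xi_j, \eta_j)$ with the two coefficients being exactly the two eigenvalues, making the bounds immediate.
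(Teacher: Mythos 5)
Your plan is correct and is essentially the paper's own argument: both reduce the quadratic form to a sum over $j$ of scalar expressions in $(Au)_j$, $(Az)_j$, $y_j$, $\varepsilon$ (the paper takes this expansion from \cite{Xu.2018}), bound each summand, and conclude with $\sum_j |(Au)_j|^2 = \norm{Au}_2^2 \le \norm{A}^2\norm{u}_2^2$ and $\max_j \sqrt{y_j+\varepsilon} \le \norm{y+\varepsilon}_\infty^{1/2}$. The only difference is cosmetic: where you diagonalize via $a_j = \sgn(c_j)(\xi_j + i\eta_j)$ to read off the two eigenvalues $2\bigl(1 - \varepsilon\sqrt{y_j+\varepsilon}\,(|c_j|^2+\varepsilon)^{-3/2}\bigr)$ and $2\bigl(1 - \sqrt{y_j+\varepsilon}\,(|c_j|^2+\varepsilon)^{-1/2}\bigr)$, the paper simply applies $\RE(\alpha) \ge -|\alpha|$ to the cross term $\RE\bigl((Az)_j^2(\conj{Au})_j^2\bigr)$, which produces the same per-index lower bound $2\bigl(1 - \sqrt{y_j+\varepsilon}/\sqrt{\varepsilon}\bigr)|(Au)_j|^2$.
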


\begin{proof}
The proof is based on computations in \cite[pp. 26-28]{Xu.2018}, which expand the quadratic term as
\begin{align*}
\begin{bmatrix}
u \\
\bar u
\end{bmatrix}^*
\nabla^2 \q L_\varepsilon(z)
\begin{bmatrix}
u \\
\bar u
\end{bmatrix}
& = 2 \sum_{j \in [m]} \left[ 1 - \frac{\varepsilon \sqrt{y_j + \varepsilon}}{( |(Az)_j|^2 + \varepsilon)^{3/2}} \right] |(Au)_j|^2 \\
& \quad + \sum_{j \in [m]} \frac{\sqrt{y_j + \varepsilon}}{( |(Az)_j|^2 + \varepsilon)^{3/2}} \left[  \RE( (Az)_j^2 (\conj{Au})_j^2 ) - |(Az)_j|^2 |(Au)_j|^2 \right].
\end{align*}
The upper bound was derived in \cite[pp. 26-28]{Xu.2018} by applying $\RE(\alpha) \le |\alpha|$ for $\alpha \in \bb C$ and noticing that the fraction in the first sum is nonnegative. For the lower bound, we use that $\RE(\alpha) \ge -|\alpha|$ for $\alpha \in \bb C$, which yields
\begin{align*}
\begin{bmatrix}
u \\
\bar u
\end{bmatrix}^*
\nabla^2 \q L_\varepsilon(z)
\begin{bmatrix}
u \\
\bar u
\end{bmatrix}
& 
\ge 2 \sum_{j \in [m]} \left[ 1 - \frac{ (|(Az)_j|^2 + \varepsilon) \sqrt{y_j + \varepsilon}}{( |(Az)_j|^2 + \varepsilon)^{3/2}} \right] |(Au)_j|^2 \\
& = 2 \sum_{j \in [m]} \left[ 1 - \frac{ \sqrt{y_j + \varepsilon}}{ \sqrt{ |(Az)_j|^2 + \varepsilon } } \right] |(Au)_j|^2 \\
& \ge 2 \sum_{j \in [m]} \left[ 1 - \frac{ \sqrt{y_j + \varepsilon}}{ \sqrt{ 0 + \varepsilon } } \right] |(Au)_j|^2 
= - 2 \sum_{j \in [m]} \left[ \frac{ \sqrt{y_j + \varepsilon}}{ \sqrt{\varepsilon } } - 1 \right] |(Au)_j|^2. 
\end{align*}
Note that $\sqrt{y_j +\varepsilon}/ \sqrt \varepsilon \ge 1$ for all $j \in [m]$. Thus, the multipliers are nonnegative. Consequently, we bound them by the largest one, which gives
\begin{align*}
\begin{bmatrix}
u \\
\bar u
\end{bmatrix}^*
\nabla^2 \q L_\varepsilon(z)
\begin{bmatrix}
u \\
\bar u
\end{bmatrix}
& \ge - 2  (\norm{ y + \varepsilon }_\infty^{1/2} \varepsilon^{-1/2} -1)  \sum_{j \in [m]} |(Au)_j|^2 \\
& \ge - 2  (\norm{ y + \varepsilon }_\infty^{1/2} \varepsilon^{-1/2} -1) \norm{A u}_2^2 \\ 
& \ge - 2  (\norm{ y + \varepsilon }_\infty^{1/2} \varepsilon^{-1/2} -1) \norm{A}^2 \norm{u}_2^2.
\end{align*}
\end{proof}

With these definitions and preliminary results, we are ready to prove \Cref{l: descent lemma}.

\begin{proof}[Proof of \Cref{l: descent lemma}]
The first part was derived in \cite{Xu.2018} by combining the Taylor expansion up to the second order with the upper bound in \Cref{l: Hessian bounds}. More precisely, let $\varepsilon >0$. Then, by \eqref{eq: Taylor} and $\nabla_{\conj{z}} \q L_\varepsilon = \conj{\nabla_z \q L_\varepsilon}$ we have
\begin{align*}
\q L_\varepsilon(z+v) 
& = \q L_\varepsilon(z) + 
\begin{bmatrix}
\nabla_z \q L_\varepsilon \\
\overline{\nabla_z \q L_\varepsilon}
\end{bmatrix}^*
\begin{bmatrix}
v \\
\bar v
\end{bmatrix}
+
\begin{bmatrix}
v \\
\bar v
\end{bmatrix}^*
\int_0^1 
(1 - s)
\nabla^2 \q L_\varepsilon(z + s v)
d s
\begin{bmatrix}
v \\
\bar v
\end{bmatrix}
\\
& \le \q L_\varepsilon(z) + 2 \RE ( \langle \nabla_z \q L_\varepsilon ,v \rangle) + 2 \norm{A}^2 \norm{v}_2^2 \int_0^1 (1 - s)d s \\
& = \q L_\varepsilon(z) + 2 \RE (\langle \nabla_z \q L_\varepsilon ,v \rangle) + \norm{A}^2 \norm{v}_2^2.
\end{align*}
The case $\varepsilon = 0$ follows by taking limit $\varepsilon \to 0+$ in the obtained inequality.

For the Lipschitz continuity of $\q L_\varepsilon$, we first consider $\q L_\varepsilon$ as a function of real vector $(\alpha, \beta) = (\RE z, \IM z) \in \bb R^{2d}$. If $\q L_\varepsilon$ admits a global upper bound on the spectral norm of the Hessian matrix in terms of real and imaginary derivatives,
\[
\norm{
\left(
{
\renewcommand\arraystretch{1.25}
\begin{bmatrix}
(\frac{\partial}{\partial \alpha} \q L_\varepsilon)^T \\ 
(\frac{\partial}{\partial \beta} \q L_\varepsilon)^T
\end{bmatrix}
}
\begin{bmatrix}
\frac{\partial}{\partial \alpha} & \frac{\partial}{\partial \beta}
\end{bmatrix}
\right)(z)
}
\le \tilde L, \quad \text{for all } z \in \bb C^d,
\]
then \cite[Theorem 5.12]{Beck.2017} yields that its gradient (in the real sense) is Lipschitz continuous with constant $\tilde L$, that is  
\begin{equation}\label{eq: Lipschitz in real sense}
\norm{ 
{
\renewcommand\arraystretch{1.25}
\begin{bmatrix} 
(\frac{\partial \q L_\varepsilon}{\partial \alpha})^T(z) \\
(\frac{\partial \q L_\varepsilon}{\partial \beta})^T(z)  
\end{bmatrix}
-
\begin{bmatrix} 
(\frac{\partial \q L_\varepsilon}{\partial \alpha})^T(v) \\
(\frac{\partial \q L_\varepsilon}{\partial \beta})^T(v)  
\end{bmatrix}
}
}_2
\le 
\tilde L 
\norm{
\begin{bmatrix} 
\RE(z-v) \\
\IM(z-v)  
\end{bmatrix}
}_2.
\end{equation}
Recall that Wirtinger derivatives and derivatives with respect to real and imaginary components are related as
\[
\begin{bmatrix}
\frac{\partial}{\partial z} & \frac{\partial}{\partial \conj z}
\end{bmatrix}
= 
\begin{bmatrix}
\frac{\partial}{\partial \alpha} & \frac{\partial}{\partial \beta}
\end{bmatrix}
\begin{bmatrix}
1/2 I_d & 1/2 I_d \\
-i/2 I_d & i/2 I_d \\
\end{bmatrix}
=: 
\begin{bmatrix}
\frac{\partial}{\partial \alpha} & \frac{\partial}{\partial \beta}
\end{bmatrix}
U.
\]
The matrix $\sqrt 2 U$ is unitary, so that $2 U^* U = 2 U U^* = I_{2d}$ and $\norm{\sqrt 2 U q}_2 = \norm{q}_2$ for any $q \in \bb C^{2d}$. Moreover, we have  
\[
U^*
=
\begin{bmatrix}
0 & I_d \\
I_d & 0
\end{bmatrix} 
U^T
\quad \text{ and }
\quad 
2U^*
\begin{bmatrix}
\RE v \\ 
\IM v
\end{bmatrix}
=
\begin{bmatrix}
v \\ 
\conj v
\end{bmatrix}.
\]
Since the Hessian with real and imaginary derivatives is a symmetric matrix, we can rewrite its spectral norm as a maximum of absolute values of inner products, 
\begin{align*}
& 
\norm{
\left(
{
\renewcommand\arraystretch{1.25}
\begin{bmatrix}
(\frac{\partial}{\partial \alpha} \q L_\varepsilon)^T \\ 
(\frac{\partial}{\partial \beta} \q L_\varepsilon)^T
\end{bmatrix}
}
\begin{bmatrix}
\frac{\partial}{\partial \alpha} & \frac{\partial}{\partial \beta}
\end{bmatrix}
\right)(z)
}
= 
\max_{v \in \bb C^d, \norm{v}_2 = 1}
\left|
\begin{bmatrix}
\RE v \\ 
\IM v
\end{bmatrix}^T
{
\renewcommand\arraystretch{1.25}
\begin{bmatrix}
(\frac{\partial}{\partial \alpha} \q L_\varepsilon)^T \\ 
(\frac{\partial}{\partial \beta} \q L_\varepsilon)^T
\end{bmatrix}
}
\begin{bmatrix}
\frac{\partial}{\partial \alpha} & \frac{\partial}{\partial \beta}
\end{bmatrix}
\begin{bmatrix}
\RE v \\ 
\IM v
\end{bmatrix}
\right| 
(z) \\
& \quad 
= 
\max_{v \in \bb C^d, \norm{v}_2 = 1}
\left|
\begin{bmatrix}
\RE v \\ 
\IM v
\end{bmatrix}^T
2 U
\begin{bmatrix}
0 & I_d \\
I_d & 0
\end{bmatrix}
U^T
{
\renewcommand\arraystretch{1.25}
\begin{bmatrix}
(\frac{\partial}{\partial \alpha} \q L_\varepsilon)^T \\ 
(\frac{\partial}{\partial \beta} \q L_\varepsilon)^T
\end{bmatrix}
}
\begin{bmatrix}
\frac{\partial}{\partial \alpha} & \frac{\partial}{\partial \beta}
\end{bmatrix}
2U U^*
\begin{bmatrix}
\RE v \\ 
\IM v
\end{bmatrix}
\right| 
(z) \\
& \quad =
\max_{v \in \bb C^d, \norm{v}_2 = 1}
\left|
\begin{bmatrix}
v \\ 
\conj v
\end{bmatrix}^*
\begin{bmatrix}
0 & I_d \\
I_d & 0
\end{bmatrix}
{
\renewcommand\arraystretch{1.25}
\begin{bmatrix}
(\frac{\partial}{\partial z} \q L_\varepsilon)^T \\ 
(\frac{\partial}{\partial \conj z} \q L_\varepsilon)^T
\end{bmatrix}
}
\begin{bmatrix}
\frac{\partial}{\partial z} & \frac{\partial}{\partial \conj z}
\end{bmatrix}
\begin{bmatrix}
v \\ 
\conj v
\end{bmatrix}
\right| 
(z) \\
& \quad =
\max_{v \in \bb C^d, \norm{v}_2 = 1}
\left|
\begin{bmatrix}
v \\ 
\conj v
\end{bmatrix}^*
\nabla^2 \q L_\varepsilon (z) 
\begin{bmatrix}
v \\ 
\conj v
\end{bmatrix}
\right| 
\le \max_{v \in \bb C^d, \norm{v}_2 = 1} 2 L \norm{v}_2^2 = 2 L, 
\end{align*}  
where in transition to the last line we used \eqref{eq: Hessian def} and the inequality follows from \Cref{l: Hessian bounds} and the definition \eqref{eq: Lipschitz constant} of $L$. Consequently, $\tilde L$ in \eqref{eq: Lipschitz in real sense} is bounded above by $2 L$. Finally, we apply \eqref{eq: Lipschitz in real sense} to obtain Lipschitz continuity of the Wirtinger gradient,
\begin{align*}
& \norm{ \nabla_z \q L_\varepsilon(z) - \nabla_z \q L_\varepsilon(v) }_2^2
= \frac{1}{2} \norm{ 
\begin{bmatrix}
\nabla_z \q L_\varepsilon(z)  \\
\conj{ \nabla_z \q L_\varepsilon(z)}
\end{bmatrix}
 - 
\begin{bmatrix}
\nabla_z \q L_\varepsilon(v)  \\
\conj{ \nabla_z \q L_\varepsilon(v) }
\end{bmatrix}
}_2^2 \\
& \quad  = \frac{1}{2} \norm{ 
\begin{bmatrix}
\nabla_z \q L_\varepsilon(z)  \\
\nabla_{\conj{z}} \q L_\varepsilon(z)
\end{bmatrix}
 - 
\begin{bmatrix}
\nabla_z \q L_\varepsilon(v)  \\
\nabla_{\conj{z}} \q L_\varepsilon(v)
\end{bmatrix}
}_2^2
= \frac{1}{2} \norm{ 
{
\renewcommand\arraystretch{1.25}
\begin{bmatrix}
(\frac{\partial \q L_\varepsilon}{\partial z})^T (z)  \\
(\frac{\partial \q L_\varepsilon}{\partial \conj{z} })^T (z)
\end{bmatrix}
 - 
\begin{bmatrix}
(\frac{\partial \q L_\varepsilon}{\partial z})^T (v)  \\
(\frac{\partial \q L_\varepsilon}{\partial \conj{z} })^T (v)
\end{bmatrix}
}
}_2^2 \\
& \quad = \frac{1}{2} \norm{  U^T \left(
{
\renewcommand\arraystretch{1.25}
\begin{bmatrix}
(\frac{\partial \q L_\varepsilon}{\partial \alpha})^T (z)  \\
(\frac{\partial \q L_\varepsilon}{\partial \beta })^T (z)
\end{bmatrix}
 - 
\begin{bmatrix}
(\frac{\partial \q L_\varepsilon}{\partial \alpha})^T (v)  \\
(\frac{\partial \q L_\varepsilon}{\partial \beta })^T (v)
\end{bmatrix}
}
\right)
}_2^2
= \frac{1}{4} \norm{ \left(
{
\renewcommand\arraystretch{1.25}
\begin{bmatrix}
(\frac{\partial \q L_\varepsilon}{\partial \alpha})^T (z)  \\
(\frac{\partial \q L_\varepsilon}{\partial \beta })^T (z)
\end{bmatrix}
 - 
\begin{bmatrix}
(\frac{\partial \q L_\varepsilon}{\partial \alpha})^T (v)  \\
(\frac{\partial \q L_\varepsilon}{\partial \beta })^T (v)
\end{bmatrix}
}
\right)
}_2^2 \\
& \quad \le \frac{4 L^2}{4} \norm{
\begin{bmatrix} 
\RE(z-v) \\
\IM(z-v)  
\end{bmatrix}
}_2^2
=L^2 \norm{z-v}_2^2.
\end{align*}
Note that for $\varepsilon = 0$, the generalized gradient $\nabla_z \q L_0$ is not continuous due to the discontinuity of the sign function. Therefore, the Lipschitz continuity only applies for $\varepsilon >0$ and the constant $L$ deteriorates as $\varepsilon$ decreases. 
\end{proof}

\subsection{Proofs for Stochastic Amplitude Flow}\label{sec: complex sgd}

In this subsection, we provide the proofs for \Cref{sec: SAF}. While they are close to the referenced result, there are extensions or changes. Thus, for completeness and better readability of the paper, we include full proofs. 
Let us start with \Cref{p: gradient properties}.

\begin{proof}[Proof of \Cref{p: gradient properties}.]
By the linearity of expectation we obtain
\begin{align*}
\bb E g_\varepsilon(z) & = \frac{1}{K} \sum_{k \in [K]} \bb E \frac{1}{p_{r_k}} \nabla_z \q L_{\varepsilon,r_k}(z)
= \frac{1}{K} \sum_{k \in [K]} \sum_{r \in [R]} \frac{p_r}{p_r} \nabla_z \q L_{\varepsilon,r}(z) 
= \nabla_z \q L_{\varepsilon}(z) .
\end{align*}
For the expectation of the norm squared, we use equation (41) in \cite{Khaled.2020}, which gives
\[
\bb E \norm{g_\varepsilon(z)}_2^2 = \left[1 - \frac{1}{K} \right] \norm{ \nabla_z \q L_{\varepsilon}(z) }_2^2  + \sum_{r \in [R]} \frac{1}{K p_r}\norm{ \nabla_z \q L_{\varepsilon,r}(z) }_2^2.
\]
An application of \Cref{thm: AF convergence} for $\q L_{\varepsilon, r}$ with $\mu = 1/\norm{A_r}^{2}$ and $z_+ = z - \mu \nabla_z \q L_{\varepsilon,r}(z)$ gives 
\[
\q L_{\varepsilon,r}(z_+) \le \q L_{\varepsilon,r}(z) - \norm{A_r}^{-2} \norm{\nabla_z \q L_{\varepsilon,r}(z)}_2^2,
\]
which is equivalent to
\[
\norm{\nabla_z \q L_{\varepsilon,r}(z)}_2^2 \le \norm{A_r}^{2} \left[\q L_{\varepsilon,r}(z) - \q L_{\varepsilon,r}(z_+) \right] 
\le \norm{A_r}^{2} \left[\q L_{\varepsilon,r}(z) - \q L_{\varepsilon,r}^{\inf} \right].
\]
Substituting this bound into the expectation gives
\begin{align}
\bb E \norm{g_\varepsilon(z)}_2^2 & \le \beta \norm{ \nabla_z \q L_{\varepsilon}(z) }_2^2  + \sum_{r \in [R]} \frac{\norm{A_r}^{2}}{K p_r} \left[\q L_{\varepsilon,r}(z) - \q L_{\varepsilon,r}^{\inf} \right] \nonumber \\
& \le \beta \norm{ \nabla_z \q L_{\varepsilon}(z) }_2^2  + \max_{r \in [R]} \frac{\norm{A_r}^{2}}{K p_r} \left[ \q L_{\varepsilon}(z)  - \q L_{\varepsilon}^{\inf} + \q L_{\varepsilon}^{\inf} - \sum_{r \in [R]} \q L_{\varepsilon,r}^{\inf} \right]  \nonumber \\
& = \alpha [\q L_{\varepsilon}(z)  - \q L_{\varepsilon}^{\inf}] + \beta \norm{ \nabla_z \q L_{\varepsilon}(z) }_2^2  + \delta, \label{eq: norm expectation bound}
\end{align}
where we used definitions \eqref{eq: abc params}. 
\end{proof}

The starting point of the convergence analysis of \ref{eq: SAF} is the reinterpretation of \Cref{l: descent lemma} for stochastic gradients. Since the next proofs will involve tools from the theory of stochastic processes, we first review some of the necessary definitions based on \cite{Athreya.2006}.      

Let $(\q R, \q F, \bb P)$ be a probability space induced by random indices $\{ r^t :=(r^{t_1}, \ldots, r^{t_K}) \}_{t \ge 0}$. Recall that a random variable $X$ is a function from $(\q R, \q F)$ to some measurable space $(\q X, \Sigma)$, which is $\q F$-measurable, i.e., for all sets $B \in \Sigma$ the preimage $X^{-1}(B)$ is in $\q F$. We further say that $X$ is $\q G$-measurable for some $\q G \subseteq \q F$ if for all sets $B \in \Sigma$ we have $X^{-1}(B) \in \q G$. For an index set $\q J \subseteq \bb N$ and random variables $\{ X_j \}_{j \in \q J}$ the sigma algebra $\q G$ is generated by $\{ X_j \}_{j \in \q J}$ is the smallest sigma algebra, which includes all preimages $\{ X_j^{-1}(B),\ B \in \Sigma,  j \in \q J\}$.

A conditional expectation of $X$ with respect to sigma algebra $\q G$ is a $\q G$-measurable random variable $Y = \bb E [\,X\,|\,\q G\,]$ such that $\bb E X \q I_{A} = \bb E Y \q I_{A}$ for all $A \in \q G$, where $\q I_A$ is the indicator function of the set $A$. If $X$ is $\q G$-measurable, then $X = \bb E [\,X\,|\,\q G\,]$. For $\q H \subseteq \q G \subseteq \q F$ conditional expectations admit the tower property 
\[
\bb E [\,X\,|\,\q H\,] 
= \bb E [\,\bb E [\,X\,|\,\q H\,]\,|\,\q G\,] 
= \bb E [\,\bb E [\,X\,|\,\q G\,]\,|\,\q H\,]
\]
and if $\q G$ is the trivial sigma-algebra $\{ \varnothing, \q R\}$, then $\bb E [\,X\,|\,\q G\,] = \bb E X$.

Define $\q F_0 := \{ \varnothing, \q R \}$ as the trivial sigma-algebra and for $t \in \bb N$ let $\q F_t$ be a sigma-algebra generated by $\{ r^s = (r^{s_1}, \ldots, r^{s_K}) \}_{s = 0, \ldots, t-1}$. Then, by construction, $\q F_0 \subseteq \q F_1 \subseteq \ldots \subseteq \q F$ and $\{\q F_t \}_{t \ge 0}$ is called a filtration. A sequence of random variables $\{X_t\}_{t \ge 0}$ is said to be adapted to filtration $\{ \q F_t \}_{t \ge 0}$ is for all $t \ge 0$ $X_t$ is $\q F_t$-measurable. If a sequence $\{X_t\}_{t \ge 0}$ satisfies $X_t = \bb E [\,X_{t+1}\,|\,\q F_t\,]$ it is called a martingale. 

An example of the sequences adapted to $\{\q F_t\}_{t \ge 0}$ are $\{z^t\}_{t \ge 0}$, $\{ \q L_\varepsilon(z^t) \}_{t \ge 0}$ and $\{\nabla_z \q L_\varepsilon(z^t) \}_{t \ge 0}$. This is a consequence of viewing these sequences as functions of all indices $r^s$, $0 \le s \le t-1$, i.e., $z^t = f_t(r^0, \ldots, r^{t-1})$ for some measurable function $f_t$. Note that $\{g_\varepsilon(z^t) \}$ is not adapted to $\{\q F_t\}_{t \ge 0}$ as $g_\varepsilon(z^t)$ depends on $r^t$.  

Since $r^t$ is independent from previous picks $r^s$, $0 \le s \le t-1$, the conditional expectation $\bb E [\,z^{t+1}\,|\,\q F_t\,]$ can be evaluated as
\[
\bb E [\,z^{t+1}\,|\,\q F_t\,] 
= \bb E [\, f_{t+1}(r^0, \ldots, r^{t-1}, r^{t})\,|\,\q F_t\,]
= \sum_{r \in [R]^K} \left[\prod_{k=1}^{K} p_{r_k} \right] f_{t+1}(r^0, \ldots, r^{t-1}, r).
\]
Combining this property with \Cref{p: gradient properties} gives
\begin{equation}\label{eq: stoch grad cond exp}
\bb E [\,g_\varepsilon(z^t)\,|\,\q F_t \,] = \nabla_z \q L_{\varepsilon}(z^t)
\end{equation}
and
\begin{equation}\label{eq: stoch grad cond exp norm}
\bb E [\, \norm{g_\varepsilon(z^t)}_2^2 \,|\,\q F_t \,] \le \alpha [\q L_{\varepsilon}(z^t)  - \q L_{\varepsilon}^{\inf}] + \beta \norm{ \nabla_z \q L_{\varepsilon}(z^t) }_2^2  + \delta.
\end{equation}

With these definitions, we are prepared to state the descent lemma for \ref{eq: SAF}. 

\begin{lemma}[Descent lemma for \ref{eq: SAF}, based on \cite{Khaled.2020, Liu.2022}] \label{l: descent lemma SAF}
Let $\alpha, \beta$ and $\delta$ be defined as in \eqref{eq: abc params} and $\varepsilon \ge 0$. Consider a sequence $\{z_t \}_{t \ge 0}$ determined by \ref{eq: SAF} with an arbitrary starting point $z^0 \in \bb C^{d}$ and step sizes $\{ \mu_t \}_{t \ge 0}$ satisfying $\beta \norm{A}^2 \mu_t \le 1$. Then, we have
\begin{align*}
&\bb E [\q L_\varepsilon(z^{t+1}) - \q L_\varepsilon^{\inf} \,|\,\q F_t\,]  
\le (1 + \alpha \norm{A}^2 \mu_t^2) \left[ \q L_\varepsilon(z^t)  - \q L_\varepsilon^{\inf} \right] - \mu_t \norm{\nabla_z \q L_\varepsilon (z^t)}_2^2 + \delta \norm{A}^2 \mu_t^2.
\end{align*}
\end{lemma}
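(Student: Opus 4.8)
The plan is to apply the deterministic descent lemma (\Cref{l: descent lemma}) with the stochastic step $v = -\mu_t g_\varepsilon(z^t)$, then take the conditional expectation $\bb E[\,\cdot\,|\,\q F_t\,]$ and use the two moment identities \eqref{eq: stoch grad cond exp} and \eqref{eq: stoch grad cond exp norm}. Concretely, \Cref{l: descent lemma} gives
\[
\q L_\varepsilon(z^{t+1}) = \q L_\varepsilon(z^t - \mu_t g_\varepsilon(z^t)) \le \q L_\varepsilon(z^t) - 2\mu_t \RE\langle \nabla_z \q L_\varepsilon(z^t), g_\varepsilon(z^t)\rangle + \norm{A}^2 \mu_t^2 \norm{g_\varepsilon(z^t)}_2^2.
\]
Since $z^t$ and $\nabla_z \q L_\varepsilon(z^t)$ are $\q F_t$-measurable, taking $\bb E[\,\cdot\,|\,\q F_t\,]$ and pulling the $\q F_t$-measurable factor out of the inner-product term, the cross term becomes $-2\mu_t \RE\langle \nabla_z \q L_\varepsilon(z^t), \bb E[\,g_\varepsilon(z^t)\,|\,\q F_t\,]\rangle = -2\mu_t \norm{\nabla_z \q L_\varepsilon(z^t)}_2^2$ by \eqref{eq: stoch grad cond exp}, and the last term is bounded via \eqref{eq: stoch grad cond exp norm}.

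Combining these, I would obtain
\[
\bb E[\q L_\varepsilon(z^{t+1})\,|\,\q F_t\,] \le \q L_\varepsilon(z^t) - 2\mu_t \norm{\nabla_z \q L_\varepsilon(z^t)}_2^2 + \norm{A}^2\mu_t^2\left(\alpha[\q L_\varepsilon(z^t) - \q L_\varepsilon^{\inf}] + \beta\norm{\nabla_z \q L_\varepsilon(z^t)}_2^2 + \delta\right).
\]
Subtracting $\q L_\varepsilon^{\inf}$ from both sides and collecting the gradient-norm terms gives the coefficient $-(2\mu_t - \beta\norm{A}^2\mu_t^2) = -\mu_t(2 - \beta\norm{A}^2\mu_t)$ on $\norm{\nabla_z \q L_\varepsilon(z^t)}_2^2$. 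Here I invoke the hypothesis $\beta\norm{A}^2\mu_t \le 1$, which forces $2 - \beta\norm{A}^2\mu_t \ge 1$, so that $-\mu_t(2 - \beta\norm{A}^2\mu_t) \le -\mu_t$; this lets me weaken the coefficient to $-\mu_t$ and land exactly on the stated inequality, with the $(1 + \alpha\norm{A}^2\mu_t^2)$ factor multiplying $[\q L_\varepsilon(z^t) - \q L_\varepsilon^{\inf}]$ and the $\delta\norm{A}^2\mu_t^2$ remainder.

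The subtlety — more bookkeeping than genuine obstacle — is the $\varepsilon = 0$ case, where $\nabla_z \q L_0$ is merely the generalized (limiting) gradient rather than an honest Lipschitz gradient. Here \Cref{l: descent lemma} still holds as stated (its first inequality is valid for $\varepsilon \ge 0$, obtained by the limit $\varepsilon \to 0+$), and \Cref{p: gradient properties} was also proven for $\varepsilon \ge 0$, so the identities \eqref{eq: stoch grad cond exp}--\eqref{eq: stoch grad cond exp norm} carry over verbatim; thus no extra argument is needed beyond noting that every ingredient is available uniformly in $\varepsilon \ge 0$. One should also remark that all conditional expectations are well-defined: $\q L_\varepsilon(z^{t+1})$ and $\norm{g_\varepsilon(z^t)}_2^2$ are nonnegative (resp.\ bounded by an integrable $\q F_t$-measurable quantity plus a constant), so the manipulations of conditional expectations are legitimate. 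The only real care required is the correct handling of the factor-of-two in the $\RE\langle\cdot,\cdot\rangle$ term coming from the Wirtinger-gradient convention, which is already baked into \Cref{l: descent lemma}.
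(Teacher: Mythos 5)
Your proposal is correct and follows essentially the same route as the paper: apply the deterministic descent lemma with $v = -\mu_t g_\varepsilon(z^t)$, take the conditional expectation using \eqref{eq: stoch grad cond exp} and \eqref{eq: stoch grad cond exp norm}, absorb the $\beta\norm{A}^2\mu_t^2$ gradient term via $\beta\norm{A}^2\mu_t \le 1$ to weaken the coefficient to $-\mu_t$, and subtract $\q L_\varepsilon^{\inf}$. Your extra remarks on the $\varepsilon = 0$ case and measurability are consistent with how the paper handles these points.
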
    

\begin{proof}
An application of \Cref{l: descent lemma} with $v = - \mu_t g_\varepsilon(z^t)$ gives
\[
\q L_\varepsilon(z^{t+1}) = \q L_\varepsilon(z^{t} - \mu_t g_\varepsilon(z^t) ) \le \q L_\varepsilon(z^t) - 2 \mu_t \RE( \langle \nabla_z \q L_\varepsilon(z), g_\varepsilon(z^t) \rangle) + \norm{A}^2 \mu_t^2 \norm{g_\varepsilon(z^t)}_2^2.
\]
Averaging over the choice of the last step $r^t$, i.e., taking conditional expectation $\bb E [\, \cdot\,|\,\q F_t \,]$ yields
\begin{align*}
& \bb E [\q L_\varepsilon(z^{t+1}) \,|\,\q F_t\,] 
\le \q L_\varepsilon(z^t) - 2 \RE( \langle \nabla_z \q L_\varepsilon(z^t), \bb E[\, g_\varepsilon(z^t)\,|\,\q F_t\,] \rangle) + \norm{A}^2 \mu_t^2 \bb E\left[ \norm{g_\varepsilon(z^t)}_2^2\,|\,\q F_t\, \right] \\
& \quad \le \q L_\varepsilon(z^t) - 2 \mu_t \norm{ \nabla_z \q L_\varepsilon(z^t)}_2^2 + \norm{A}^2 \mu_t^2 \left( \alpha [\q L_{\varepsilon}(z^t)  - \q L_{\varepsilon}^{\inf}] + \beta \norm{ \nabla_z \q L_{\varepsilon}(z^t) }_2^2  + \delta \right),
\end{align*}
where we used \eqref{eq: stoch grad cond exp} and \eqref{eq: stoch grad cond exp norm}. By the assumption, we have $\beta \norm{A}^2 \mu_t \le 1$, which leads to
\begin{align*}
\bb E [\q L_\varepsilon(z^{t+1}) \,|\,\q F_t\,]  & = (1 + \alpha \norm{A}^2 \mu_t^2 ) [\q L_{\varepsilon}(z^t)  - \q L_{\varepsilon}^{\inf}] + \q L_{\varepsilon}^{\inf} - \mu_t \norm{ \nabla_z \q L_\varepsilon(z^t)}_2^2+ \delta \norm{A}^2 \mu_t^2.
\end{align*}
The result of \Cref{l: descent lemma SAF} is finalized by subtracting $\q L_{\varepsilon}^{\inf}$ from both sides. 
\end{proof}

With the descent lemma, we are equipped to prove the first convergence result for \ref{eq: SAF}.

\begin{proof}[Proof of \Cref{thm: SAF convergence constant}.]
The proof follows the steps of the proofs of \cite[Lemma 2, Theorem 2 and Corollary 1]{Khaled.2020}, but due to the differences between real and complex cases and alternations in one of the steps the resulting constants change. 
Since $\beta \norm{A}^2 \mu \le 1$, \Cref{l: descent lemma SAF} can be used.
Taking the expectation on both sides of \Cref{l: descent lemma SAF} leads to
\begin{equation}\label{eq: descent lemma SAF expectation}
\bb E [\q L_\varepsilon(z^{t+1}) - \q L_\varepsilon^{\inf}] \le (1 + \alpha \norm{A}^2 \mu_t^2) \bb E [\q L_\varepsilon(z^{t}) - \q L_\varepsilon^{\inf}] - \mu_t \bb E \norm{\nabla_z \q L_\varepsilon (z^t)}_2^2 + \delta \norm{A}^2 \mu_t^2.
\end{equation}

Let us recall that $\mu_t$ is constant, so that $\mu_t = \mu$ and consider notation $\Delta_t := \bb E[ \q L_\varepsilon(z^{t}) - \q L_\varepsilon^{\inf}]$ and $w = 1 + \alpha \norm{A}^2 \mu^2$. Then, multiplying \eqref{eq: descent lemma SAF expectation} with $w^{-t-1}$ gives 
\[
w^{-t-1} \Delta_{t+1} \le w^{-t-1} w \Delta_t - \mu w^{-t-1}  \bb E \norm{\nabla_z \q L_\varepsilon (z^t)}_2^2 + \delta \norm{A}^2 \mu^2 w^{-t-1}.
\]
By bringing the norm of the gradient to the left-hand side and summing up obtained inequalities for $t = 0,\ldots, T-1$ gives
\begin{align*}
& \mu \sum_{t = 0}^{T-1} w^{-t-1} \bb E \norm{\nabla_z \q L_\varepsilon (z^t)}_2^2 \le \sum_{t = 0}^{T-1} [w^{-t} \Delta_t - w^{-t-1} \Delta_{t+1} ] + \delta \norm{A}^2 \mu \sum_{t = 0}^{T-1} w^{-t-1} \\
& \quad = w^{0} \Delta_0 - w^{-T} \Delta_{T} + \delta \norm{A}^2 \mu^2 \sum_{t = 0}^{T-1} w^{-t-1} 
\le \Delta_0 + \delta \norm{A}^2 \mu^2 \sum_{t = 0}^{T-1} w^{-t-1},
\end{align*}
where we used that $\Delta_t := \bb E[ \q L_\varepsilon(z^{t}) - \q L_\varepsilon^{\inf}] \ge \q L_\varepsilon^{\inf} - \q L_\varepsilon^{\inf} = 0$.
Consequently, we obtain
\[
\min_{t = 0,\ldots, T-1} \bb E \norm{\nabla_z \q L_\varepsilon (z^t)}_2^2
\le \frac{\mu \sum_{t = 0}^{T-1} w^{-t-1} \bb E \norm{\nabla_z \q L_\varepsilon (z^t)}_2^2}{ \mu \sum_{t = 0}^{T-1} w^{-t-1} }
\le \frac{\Delta_0}{\mu \sum_{t = 0}^{T-1} w^{-t-1} } + \delta \norm{A}^2 \mu. 
\] 
If $\alpha =0$, which is only possible if $A$ is the zero matrix, $w = 1$ and $\sum_{t = 0}^{T-1} w^{-t-1} = T$. Otherwise, 
\[
\sum_{t = 0}^{T-1} w^{-t-1} 
= \frac{1 - w^{- T} }{w (1 - w^{-1}) }
= \frac{w^T - 1}{w^{T}(w - 1)}
\]
as a geometric sum. Thus, 
\[
\min_{t = 0,\ldots, T-1} \bb E \norm{\nabla_z \q L_\varepsilon (z^t)}_2^2
\le \frac{w^T (w-1) \Delta_0 }{\mu (w^T -1) } + \delta \norm{A}^2 \mu
= \frac{ \alpha \norm{A}^2 \mu^2 \Delta_0}{\mu}\left[ 1 + \frac{1}{w^T -1}\right] + \delta \norm{A}^2 \mu
\]  
Note that the function $s \mapsto 1 + \tfrac{1}{s}$ is decreasing on $(0,+\infty)$ and 
\[
w^{T} -1 = [1 + \alpha \norm{A}^2 \mu^2 ]^T -1 \ge T \alpha \norm{A}^2 \mu^2.
\]
Therefore, by combining these facts with the assumption $T \alpha \norm{A}^2 \mu^2 \le 1$ we arrive at 
\[
\min_{t = 0,\ldots, T-1} \bb E \norm{\nabla_z \q L_\varepsilon (z^t)}_2^2 
\le \frac{ \alpha \norm{A}^2 \mu^2 \Delta_0}{\mu}\left[ 1 + \frac{1}{T \alpha \norm{A}^2 \mu^2}\right] + \delta \norm{A}^2 \mu
\le \frac{ 2 \Delta_0}{T \mu} + \delta \norm{A}^2 \mu.
\]
Consequently, independently of $\alpha$, the bound above applies. The second term satisfies $\delta \norm{A}^2 \mu \le \gamma^2 / 2$ by the choice of $\mu$. Hence, selecting $T \ge \frac{ 4 \Delta_0}{\gamma^2 \mu}$ provides the desired bound on the norms of the gradient. If $\mu$ is chosen to be the minimum of three values, then 
\[
T \ge \frac{ 4 \Delta_0}{\gamma^2 \mu} = 
\frac{ 4 \Delta_0 }{\gamma^2  \min\left\{  \frac{1}{\sqrt{\alpha \norm{A}^2 T } } ,\frac{1}{\beta \norm{A}^2 }, \frac{\gamma^2}{ 2 \delta \norm{A}^2 } \right\} },
\] 
and by splitting the minimum into three separate cases and simplifying the first with $T$ on the right-hand side leads to the stated lower bound on $T$.    
\end{proof}

The next few proofs slightly expand on \cite{Liu.2022} and rely on the following result.

\begin{proposition}[{\cite[Theorem 1]{Robbins.1971}}]\label{p: near-supermartingale convergence}
Let $\{X_t\}_{t \ge 0}$, $\{Y_t\}_{t \ge 0}$ and $\{Z_t\}_{t \ge 0}$ be three sequences of nonnegative random variables that are adapted to a filtration $\{ \q F_t \}$. Let $\eta_t$ be a sequence of nonnegative real numbers such that $\sum_{t=0}^\infty \eta_t < \infty$. Suppose that 
\[
\bb E[Y_{t+1}\,|\, \q F_t \,] \le (1 + \eta_t) Y_t - X_t + Z_t, \quad \text{for all } t \ge 0,
\]
and $\sum_{t = 0}^\infty Z_t < \infty$ almost surely. Then, $\sum_{t = 0}^\infty X_t < \infty$ almost surely and $Y_t$ converges almost surely. 
\end{proposition}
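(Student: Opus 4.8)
This is the Robbins--Siegmund almost-supermartingale lemma, and the plan is the classical one: first absorb the factors $(1+\eta_t)$, then introduce an explicit supermartingale, and finally extract the two conclusions from its almost sure convergence by a truncation argument.

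\emph{Step 1: reduction to $\eta_t\equiv 0$.} Since $\sum_t\eta_t<\infty$, the partial products $\beta_t:=\prod_{k=0}^{t-1}(1+\eta_k)$, with $\beta_0:=1$, increase to a finite limit $\beta_\infty\ge 1$, so $1\le\beta_t\le\beta_\infty$ for every $t$. I would divide the hypothesis by $\beta_{t+1}=(1+\eta_t)\beta_t$ and set $\tilde Y_t:=Y_t/\beta_t$, $\tilde X_t:=X_t/\beta_{t+1}$, $\tilde Z_t:=Z_t/\beta_{t+1}$, obtaining
\[
\bb E[\,\tilde Y_{t+1}\,|\,\q F_t\,]\le\tilde Y_t-\tilde X_t+\tilde Z_t,
\]
with all three sequences nonnegative and adapted. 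Because $\beta_t$ is bounded away from $0$ and $\infty$, $\sum_t\tilde Z_t<\infty$ a.s.\ follows from $\sum_t Z_t<\infty$ a.s., and the conclusions $\sum_t\tilde X_t<\infty$ a.s.\ and the a.s.\ convergence of $\tilde Y_t$ are equivalent to the corresponding statements for $X_t$ and $Y_t$. Hence it suffices to treat $\eta_t\equiv 0$.

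\emph{Step 2: auxiliary supermartingale and localization.} I would set
\[
W_t:=Y_t+\sum_{s=0}^{t-1}X_s-\sum_{s=0}^{t-1}Z_s,
\]
which is adapted; since $\sum_{s<t}X_s$ and $\sum_{s<t}Z_s$ are $\q F_t$-measurable, the recursion gives $\bb E[\,W_{t+1}\,|\,\q F_t\,]\le W_t$, so $\{W_t\}$ is a supermartingale. The difficulty --- and I expect this to be the main obstacle --- is that $W_t$ is neither bounded below nor uniformly $L^1$-bounded, because $\sum_s Z_s$ is only \emph{almost surely} finite, so the supermartingale convergence theorem does not apply directly. I would repair this by truncation: for $c>0$ let $T_c:=\inf\{t\ge 0:\sum_{s=0}^{t}Z_s>c\}$ (with $\inf\varnothing=\infty$); since the partial sums of the nonnegative $Z_s$ are nondecreasing, $\{T_c\le t\}=\{\sum_{s=0}^{t}Z_s>c\}\in\q F_t$, so $T_c$ is a stopping time. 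The stopped process $W_{t\wedge T_c}$ is still a supermartingale, and by the definition of $T_c$ we have $\sum_{s=0}^{(t\wedge T_c)-1}Z_s\le c$, whence $W_{t\wedge T_c}\ge Y_{t\wedge T_c}-c\ge-c$. Thus $W_{t\wedge T_c}+c$ is a nonnegative supermartingale, which converges a.s.

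\emph{Step 3: conclusion.} On the event $\{T_c=\infty\}$ we have $W_t=W_{t\wedge T_c}$ for all $t$, and $\sum_{s=0}^{t-1}Z_s$ converges with limit at most $c$; therefore $Y_t+\sum_{s=0}^{t-1}X_s=W_t+\sum_{s=0}^{t-1}Z_s$ converges a.s.\ on this event. A convergent sequence is bounded, and since $\sum_{s<t}X_s$ is nondecreasing and $Y_t\ge 0$, this forces $\sum_{s=0}^\infty X_s<\infty$ and then, by subtracting the convergent partial sums, the a.s.\ convergence of $Y_t$ on $\{T_c=\infty\}$. Finally $\{T_c=\infty\}=\{\sum_{s=0}^\infty Z_s\le c\}$, so $\bigcup_{c\in\bb N}\{T_c=\infty\}=\{\sum_{s=0}^\infty Z_s<\infty\}$ has probability one by hypothesis; hence $\sum_t X_t<\infty$ and $Y_t$ converges, both almost surely. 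Steps 1 and 3 are essentially bookkeeping; the one place where care is needed is the localization in Step 2.
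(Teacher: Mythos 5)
The paper does not prove this proposition at all: it is imported verbatim as Theorem 1 of Robbins and Siegmund \cite{Robbins.1971}, so there is no in-paper argument to compare against. Your proof is the classical Robbins--Siegmund argument --- rescale by the partial products $\beta_t=\prod_{k<t}(1+\eta_k)$ to remove the factors $1+\eta_t$, form the compensated process $W_t=Y_t+\sum_{s<t}X_s-\sum_{s<t}Z_s$, localize with the stopping time $T_c$ so that the stopped process is a supermartingale bounded below by $-c$, and let $c\to\infty$ --- and it is correct, including the key observation that $\{T_c\le t\}\in\q F_t$ and that $\{T_c=\infty\}=\{\sum_{s}Z_s\le c\}$ exhausts a set of full probability. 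The only fine print worth adding: to speak of $W_t$ as a supermartingale and to invoke the nonnegative-supermartingale convergence theorem for $W_{t\wedge T_c}+c$ you implicitly use integrability of $Y_0$ (otherwise add one more localization over the $\q F_0$-events $\{Y_0\le K\}$, $K\in\bb N$, which passes through the conditional expectations unchanged); in the paper's application $Y_0=\q L_\varepsilon(z^0)-\q L_\varepsilon^{\inf}$ is deterministic, so this is immaterial.
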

 
A useful consequence of \Cref{p: near-supermartingale convergence} is the next lemma.
\begin{lemma}[{\cite[Lemma 3]{Liu.2022}}]\label{l: asymptotic rate}
Let $\{X_t\}_{t \ge 0}$ be a sequence of nonnegative real numbers and $\{ \mu_t\}_{t \ge 0}$ be a decreasing sequence such that $\sum_{t = 0}^\infty \mu_t X_t < \infty$ and \eqref{eq: step conditions 2} holds. Then, $\min_{t = 0, \ldots, T-1} X_t = o ([\sum_{t = 0}^{T-1} \mu_t]^{-1})$.
\end{lemma}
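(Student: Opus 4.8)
The plan is to prove the statement in the equivalent form $\lim_{T\to\infty} M_T a_T = 0$, where I abbreviate $M_T := \sum_{t=0}^{T-1}\mu_t$ and $a_T := \min_{0\le t\le T-1} X_t$; indeed, $\min_{t=0,\dots,T-1} X_t = o\big([\sum_{t=0}^{T-1}\mu_t]^{-1}\big)$ is precisely $M_T a_T \to 0$. Since $M_T a_T \ge 0$, it suffices to exclude $\limsup_{T\to\infty} M_T a_T > 0$. A preliminary observation is that $M_T \to \infty$: as the $\mu_t$ are positive, $M_T$ is increasing, so if it had a finite limit then $\mu_t / M_t \le \mu_t/M_1 = \mu_t/\mu_0$ for every $t\ge 1$, and hence $\sum_{t\ge 1}\mu_t/M_t < \infty$, contradicting the divergence required by \eqref{eq: step conditions 2}.

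Now suppose, for contradiction, that $M_T a_T \not\to 0$. Since $M_T a_T \ge 0$, there are an $\epsilon>0$ and an infinite set of indices $T$ with $M_T a_T > \epsilon$. Using that $M_T$ is non-decreasing and unbounded, I would then thin this set to a strictly increasing subsequence $T_0 < T_1 < T_2 < \cdots$ along which the partial sums grow geometrically, i.e.\ $M_{T_k} \ge 2M_{T_{k-1}}$ for all $k\ge 1$ (pick the $T_k$ greedily). Consequently $M_{T_k} - M_{T_{k-1}} \ge \tfrac12 M_{T_k}$, and the index blocks $B_k := \{T_{k-1},\dots,T_k-1\}$, $k\ge 1$, are pairwise disjoint with union $\{t : t\ge T_0\}$.

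The key step is to bound below the part of the (finite) sum $\sum_t \mu_t X_t$ contributed by each block $B_k$. For $t\in B_k$ one has $t\le T_k-1$, whence $X_t \ge \min_{0\le s\le T_k-1} X_s = a_{T_k}$, so
\[
\sum_{t\in B_k}\mu_t X_t \ \ge\ a_{T_k}\sum_{t\in B_k}\mu_t \ =\ a_{T_k}\bigl(M_{T_k}-M_{T_{k-1}}\bigr)\ \ge\ \tfrac12\, a_{T_k} M_{T_k}\ >\ \tfrac{\epsilon}{2}.
\]
Summing over the disjoint blocks yields $\sum_{t\ge 0}\mu_t X_t \ge \sum_{k\ge 1}\sum_{t\in B_k}\mu_t X_t \ge \sum_{k\ge 1}\tfrac{\epsilon}{2} = \infty$, contradicting $\sum_{t\ge 0}\mu_t X_t<\infty$. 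Hence $\limsup_T M_T a_T=0$, which is the assertion.

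The main obstacle is the extraction of the subsequence $(T_k)$: one has to thin the ``bad'' indices enough that each block $B_k$ still accounts for a fixed fraction (here one half) of the full partial sum $M_{T_k}$, which is exactly what turns the uniform per-block lower bound $\epsilon/2$ into a divergent series. I note that monotonicity of $\mu_t$ is not actually used in this argument — only positivity of the $\mu_t$ and the divergence $M_T\to\infty$ enter, the latter being a consequence of \eqref{eq: step conditions 2}; the hypothesis that $\mu_t$ is decreasing in \Cref{l: asymptotic rate} is inherited from the way it is applied in \cite{Liu.2022}.
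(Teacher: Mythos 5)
Your argument is correct: the reduction to showing $M_T a_T \to 0$, the derivation of $M_T \to \infty$ from \eqref{eq: step conditions 2}, the greedy extraction of bad indices $T_k$ with $M_{T_k} \ge 2M_{T_{k-1}}$, and the per-block lower bound $\sum_{t \in B_k}\mu_t X_t > \epsilon/2$ all check out, and together they contradict $\sum_t \mu_t X_t < \infty$. Note that the paper itself gives no proof of this lemma --- it is imported verbatim as \cite[Lemma 3]{Liu.2022} and used as a black box --- so there is no in-paper argument to compare against; your proof is essentially the standard one for results of this type (show the $\liminf$ would force divergence, then use disjoint blocks on which the partial sums double to rule out a positive $\limsup$), so you have in effect reproduced the cited proof rather than found a new route. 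Your closing observation is also accurate and worth keeping in mind: monotonicity of $\mu_t$ is never used, only positivity of the $\mu_t$ and the divergence $\sum_{t=0}^{T-1}\mu_t \to \infty$, the latter following from \eqref{eq: step conditions 2} provided that sum is read as starting at $t=1$ (for $t=0$ the denominator is an empty sum, a convention quirk inherited from the paper's statement).
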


These two results combined provide \Cref{thm: SAF convergence decreasing}.
\begin{proof}[Proof of \Cref{thm: SAF convergence decreasing}.]
Let us apply \Cref{p: near-supermartingale convergence} with sequences 
\[
X_t = \mu_t \norm{\nabla_z \q L_\varepsilon(z^{t})}_2^2, 
\quad Y_t = \q L_\varepsilon(z^{t}) - \q L_\varepsilon^{\inf}, 
\quad Z_t = \delta \norm{A}^2 \mu_t^2,
\quad \eta_t = \alpha \norm{A}^2 \mu_t^2,
\] 
and filtration $\{\q F_t\}_{t \ge 0}$ as before. Then, $X_t, Y_t$ and $Z_t$ are adapted with $\q F_t$ and are nonnegative by definition. By the assumption \eqref{eq: step conditions}, we have
\[
\sum_{t=0}^\infty \eta_t = \alpha \norm{A}^2 \sum_{t=0}^\infty \mu_t^2 < \infty, 
\text{ and, analogously, } \sum_{t=0}^\infty Z_t < \infty.
\]
Finally, the last condition of \Cref{p: near-supermartingale convergence} is precisely \Cref{l: descent lemma SAF}, which holds under assumptions \eqref{eq: step conditions}. Hence, we get that $\q L_\varepsilon(z^{t}) - \q L_\varepsilon^{\inf}$ converges almost surely as $t \to \infty$ and $C^2 :=\sum_{t=0}^\infty \mu_t \norm{\nabla_z \q L_\varepsilon(z^{t})}_2^2 < \infty$ almost surely. Since $ \q L_\varepsilon^{\inf}$ is a constant, $\q L_\varepsilon(z^{t})$ converges almost surely. By definition of minimum, for any $T > s \ge 0$ we obtain
\begin{equation}\label{eq: lim inf tech}
\min_{t = s,\ldots, T-1} \norm{\nabla_z \q L_\varepsilon(z^{t})}_2^2 
\le \left[ \sum_{t=s}^{T-1}\mu_t \norm{\nabla_z \q L_\varepsilon(z^{t})}_2^2 \right] \cdot \left[ \sum_{t=s}^{T-1} \mu_t  \right]^{-1}
\le C^2 \left[ \sum_{t=s}^{T-1} \mu_t  \right]^{-1}.
\end{equation}
Since by \eqref{eq: step conditions}, the partial sums $\sum_{t=0}^{T-1} \mu_t \to \infty$ as $t \to \infty$, for $s=0$ it implies that 
\[
\min_{t = 0,\ldots, T-1} \norm{\nabla_z \q L_\varepsilon(z^{t})}_2^2 \to 0 \text{ as } t \to \infty.
\]
If additionally $\mu_t$ is decreasing and \eqref{eq: step conditions 2} holds, we apply \Cref{l: asymptotic rate}, which provides desired asymptotic convergence rate.
\end{proof}

The proof of \Cref{thm: SAF convergence decreasing expectation} is similar to the previous proof.

\begin{proof}[Proof of \Cref{thm: SAF convergence decreasing expectation}.]
The sequences $X_t, Y_t$ in the proof above are replaced by their expectations.
They are constant and adapted to any filtration $\{ \q F_t \}_{t \ge 0}$. Then, the proof is analogous with \Cref{l: descent lemma SAF} replaced by \eqref{eq: descent lemma SAF expectation}. Since everything is deterministic, almost sure convergence is just a regular convergence and $c$ is a constant satisfying $c^2 = \bb E C^2$. 
\end{proof}

Consequently, \Cref{cor: step size} follows by checking conditions of \Cref{thm: SAF convergence decreasing expectation}.
\begin{proof}[Proof of \Cref{cor: step size}.]
By the assumption on $\mu$ and the monotonicity of $\mu_t$, we have
\[
\beta \norm{A}^2 \mu_t \le \beta  \norm{A}^2 \mu \le 1. 
\] 
By the choice of $\theta$ and the integral test, the series $\sum_{t = 0}^{\infty} \mu_t$ is divergent and $\sum_{t = 0}^{\infty} \mu_t^2$ is convergent. Furthermore, $\mu_t$ is decreasing, which combined with $\theta < 1/2$ gives
\begin{align*}
\sum_{t = 0}^{T-1} \mu_t
& = \mu + \mu \sum_{t = 1}^{T-1} \int_{t-1}^t (1+t)^{-1/2-\theta} ds
\le \mu + \mu \sum_{t = 1}^{T-1} \int_{t-1}^t (1+s)^{-1/2-\theta} ds \\
& = \mu + \mu \int_0^{T-1} (1+s)^{-1/2-\theta} ds 
= \mu + \frac{\mu}{1 -1/2 -\theta} \left[ T^{1-1/2-\theta} - 1 \right] \\
& = \mu - \frac{\mu}{1/2 -\theta} + \frac{\mu}{1/2 -\theta} T^{1/2-\theta}
\le \frac{\mu}{1/2 -\theta} T^{1/2-\theta}.
\end{align*}  
Thus, we obtain  
\[
\frac{\mu_t}{\sum_{s = 0}^{t-1} \mu_s}
\ge \frac{1/2 -\theta}{(1 + t )^{1/2 + \theta} t ^{1/2 - \theta}}
\ge \frac{1/2 -\theta}{1 + t},
\] 
and by the comparison test the series $\sum_{t =0}^\infty \mu_t \left[ \sum_{s = 0}^{t-1} \mu_s \right]^{-1}$ is divergent. Hence, \Cref{thm: SAF convergence decreasing expectation} applies. Finally, we establish an upper bound on $[ \sum_{t=0}^{T-1} \mu_t ]^{-1}$ using the monotonicity of $\mu_t$,
\begin{align*}
\sum_{t = 0}^{T-1} \mu_t
& = \mu \sum_{t = 0}^{T-1} \int_t^{t+1} (1+t)^{-1/2-\theta} ds
\ge \mu \sum_{t = 0}^{T-1} \int_{t}^{t+1} (1+s)^{-1/2-\theta} ds \\
& = \mu \int_0^{T} (1+s)^{-1/2-\theta} ds 
= \frac{\mu}{1/2 -\theta} \left[ (1+T)^{1/2-\theta} - 1 \right].
\end{align*} 
Therefore, we have
\[
\min_{t = 0, \ldots, T-1} \bb E \norm{\nabla_z \q L_\varepsilon(z^{t})}_2^2 
\le c^2 \left[ \sum_{t=0}^{T-1} \mu_t  \right]^{-1} 
\le \frac{c^2(1/2 -\theta)}{\mu[(1+T)^{1/2-\theta} - 1]},
\]
and selecting $T$ as in the statement of corollary guarantees that $c^2(1/2 -\theta)\mu^{-1}[(1+T)^{1/2-\theta} - 1]^{-1} \le \gamma^2$.
\end{proof}

The next step is to translate the result of \Cref{thm: SAF convergence decreasing} in terms of $\liminf$. 
\begin{proof}[Proof of \Cref{col: min and liminf equivalence}.]
By definition of $\liminf$, we have
\[
\liminf_{t \to \infty} \norm{\nabla_z \q L_\varepsilon(z^{t})}_2
= \lim_{s \to \infty} \inf_{t \ge s} \norm{\nabla_z \q L_\varepsilon(z^{t})}_2.
\]
In the proof of \Cref{thm: SAF convergence decreasing}, we derived inequality \eqref{eq: lim inf tech}. Since by the assumption $\sum_{t=0}^\infty \mu_t$ diverges, which implies that $ \sum_{t=s}^\infty \mu_t$ diverges. Hence, taking limit $T \to \infty$ in \eqref{eq: lim inf tech} gives  $\inf_{t \ge s} \norm{\nabla_z \q L_\varepsilon(z^{t})}_2 = 0$ for all $s \ge 0$ and, consequently, $\liminf_{t \to \infty} \norm{\nabla_z \q L_\varepsilon(z^{t})}_2 = 0$ a. s.
\end{proof}

The last major proof in this section extends \cite[Theorem 2]{Orabona.2020} for a more general bound on expectation given by \Cref{p: gradient properties}. It is based on the next lemma.
\begin{lemma}[{\cite[Lemma 1]{Orabona.2020}}]\label{l: convergence to 0}
Let $\{\xi_t\}_{t \ge 0}$ and $\{\mu_t\}_{t \ge 0}$ be two nonnegative sequences and $\{ q^t \}_{t \ge 0}$ be a sequence in $\bb C^d$. Assume that 
\[
\sum_{t = 0}^{\infty} \mu_t \xi_t^p < \infty, \quad \sum_{t = 0}^{\infty} \mu_t = \infty \quad \text{ and } \quad \norm{ \sum_{t = 0}^{\infty} \mu_t q^t }_2 < \infty,
\]   
for some $p \ge 1$. If there exists $L \ge 0$ such that for all $t, k \ge 0$ the inequality
\[
| \xi_{t + k} - \xi_t | \le L \sum_{s = t}^{t + k -1}  \mu_s  \xi_s + L \norm{ \sum_{s = t}^{t + k -1}  \mu_s q^s }_2
\]
holds, then $\lim_{t \to \infty} \xi_{t} = 0$.
\end{lemma}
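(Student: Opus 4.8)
The plan is to upgrade the easy fact that $\liminf_{t\to\infty}\xi_t=0$ to the full statement $\lim_{t\to\infty}\xi_t=0$, using the quasi-Lipschitz bound to forbid the sequence from repeatedly climbing back up after it has become small. Since $\xi_t\ge 0$, it suffices to rule out $\limsup_{t\to\infty}\xi_t>0$.

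First I would record two elementary consequences of the summability hypotheses. Because $\sum_t\mu_t=\infty$ while $\sum_t\mu_t\xi_t^p<\infty$, the sequence cannot stay bounded away from zero: if $\xi_t\ge c>0$ for all large $t$, then $\mu_t\xi_t^p\ge c^p\mu_t$ and summing contradicts convergence, so $\liminf_{t\to\infty}\xi_t=0$. Next, the tails of the two convergent series vanish. Writing $R_t:=\sum_{s\ge t}\mu_s\xi_s^p$, we have $R_t\to 0$; and since the sum $\sum_t\mu_t q^t$ exists in $\bb C^d$ with finite norm, its partial sums are Cauchy, so that $Q_t:=\sup_{k\ge 0}\norm{\sum_{s=t}^{t+k-1}\mu_s q^s}_2\to 0$ as $t\to\infty$. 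These two error quantities $R_t,Q_t\to 0$ are exactly what will annihilate the right-hand side of the hypothesis inequality on any interval that starts late.

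The core is a contradiction argument. Suppose $\limsup_{t\to\infty}\xi_t=M>0$ and fix $\delta\in(0,M/2)$. Since $\liminf\xi_t=0<\delta$ and $\limsup\xi_t=M>2\delta$, the sequence drops below $\delta$ and rises above $2\delta$ infinitely often. I would build, inductively and with starting indices tending to infinity, intervals $[m_j,n_j]$ across which $\xi$ crosses the band: pick $n_j$ with $\xi_{n_j}\ge 2\delta$ and let $m_j$ be the last index before $n_j$ with $\xi_{m_j}<\delta$, so that $\xi_{m_j}<\delta\le 2\delta\le\xi_{n_j}$ while $\xi_s\ge\delta$ for every $m_j<s<n_j$. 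Applying the hypothesis with $t=m_j$ and $k=n_j-m_j$ yields on the left a fixed positive gap $\xi_{n_j}-\xi_{m_j}\ge\delta$.

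The decisive step, and the main obstacle, is to show the right-hand side shrinks to zero along $j$. The noise term is immediately at most $L\,Q_{m_j}\to 0$. The term $L\sum_{s=m_j}^{n_j-1}\mu_s\xi_s$ is delicate because it involves $\xi_s$ only to the first power, whereas summability is known only for the $p$-th power; here I would exploit the band lower bound, converting the first-power sum into the summable one: for the interior indices $\xi_s\ge\delta$ gives $\mu_s\xi_s\le\delta^{1-p}\mu_s\xi_s^p$, so that part is bounded by $\delta^{1-p}R_{m_j}\to 0$. This conversion is precisely why the interval endpoints must be chosen as band-crossing times and why the construction forces $\xi_s\ge\delta$ throughout the interior. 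The lone boundary term $\mu_{m_j}\xi_{m_j}\le\delta\,\mu_{m_j}$ is the one place where control beyond the band bound is required; it tends to zero provided the step sizes vanish, which holds in every application of the lemma (for instance through $\sum_t\mu_t^2<\infty$, whence $\mu_t\to 0$). With all three contributions on the right tending to zero while the left stays $\ge\delta$, we reach a contradiction, so $\limsup_{t\to\infty}\xi_t=0$ and hence $\lim_{t\to\infty}\xi_t=0$.
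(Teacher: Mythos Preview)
The paper does not prove this lemma; it is quoted from \cite{Orabona.2020} and invoked as a black box in the proof of \Cref{thm: a.s. convergence of gradient}. So there is no paper proof to compare against, only the statement itself.

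Your overall strategy --- show $\liminf_{t}\xi_t=0$, then use a band-crossing argument to rule out $\limsup_{t}\xi_t>0$ --- is the standard one, and everything up to and including the conversion $\mu_s\xi_s\le\delta^{1-p}\mu_s\xi_s^p$ on the interior of the crossing interval is correct. The one genuine gap is exactly the point you flag yourself: the boundary contribution $\mu_{m_j}\xi_{m_j}$. You bound it by $\delta\,\mu_{m_j}$ and then invoke $\mu_{m_j}\to 0$, but the lemma as stated assumes neither $\mu_t\to 0$ nor $\sum_t\mu_t^2<\infty$. Saying that this holds ``in every application of the lemma'' may be true for the single application in this paper, but it is not a proof of the lemma in the generality in which it is stated.

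The repair is small and purely structural: run the band-crossing on the \emph{descent} rather than the ascent. Having chosen $n_j\to\infty$ with $\xi_{n_j}\ge 2\delta$, let $\ell_j>n_j$ be the \emph{first} subsequent index with $\xi_{\ell_j}<\delta$ (which exists since $\liminf_t\xi_t=0$). Then $\xi_s\ge\delta$ for \emph{every} $s$ with $n_j\le s\le \ell_j-1$, i.e.\ for all indices that actually appear in $\sum_{s=n_j}^{\ell_j-1}\mu_s\xi_s$, so the conversion $\mu_s\xi_s\le\delta^{1-p}\mu_s\xi_s^p$ applies to every term and there is no boundary term at all. The left-hand side is still at least $\xi_{n_j}-\xi_{\ell_j}\ge\delta$, while the right-hand side is at most $L\delta^{1-p}R_{n_j}+LQ_{n_j}\to 0$, yielding the contradiction. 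With this swap your proof is complete and needs no extra hypothesis on $\mu_t$.
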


Now, we prove \Cref{thm: a.s. convergence of gradient}.

\begin{proof}[Proof of \Cref{thm: a.s. convergence of gradient}.]
Our goal is to apply \Cref{l: convergence to 0} with $\xi_t = \norm{\nabla_z \q L_\varepsilon(z^{t})}_2$ and $\mu_t$ being step sizes. Since $\varepsilon >0$, by \Cref{l: descent lemma} the gradient is Lipschitz continuous. Hence, for all $t, k \ge 0$ reverse triangle and triangle inequalities yield 
\begin{align*}
& \left| \norm{\nabla_z \q L_\varepsilon(z^{t+ k})}_2 - \norm{\nabla_z \q L_\varepsilon(z^{t})}_2 \right| 
\le \norm{\nabla_z \q L_\varepsilon(z^{t+ k})  - \nabla_z \q L_\varepsilon(z^{t}) }_2 
\le L \norm{ z^{t+ k} - z^{t} }_2   \\
& \quad = L \norm{ \sum_{s = t}^{t+k-1} \mu_s g_\varepsilon(z^s)}_2
\le L \norm{ \sum_{s = t}^{t+k-1} \mu_s \nabla_z \q L_\varepsilon(z^{s}) }_2 + L \norm{ \sum_{s = t}^{t+k-1} \mu_s [g_\varepsilon(z^s) - \nabla_z \q L_\varepsilon(z^{s})] }_2 \\
& \quad \le L \sum_{s = t}^{t+k-1} \mu_s \norm{\nabla_z \q L_\varepsilon(z^{s}) }_2 + L \norm{ \sum_{s = t}^{t+k-1} \mu_s [g_\varepsilon(z^s) - \nabla_z \q L_\varepsilon(z^{s})] }_2,
\end{align*}
where $L$ is the Lipschitz constant \eqref{eq: Lipschitz constant}.
Under assumptions of \Cref{thm: SAF convergence decreasing}, $\sum_{t = 0}^\infty \mu_t = \infty$. By \Cref{thm: SAF convergence decreasing expectation}, $\sum_{t =0}^\infty \mu_t \norm{\nabla_z \q L_\varepsilon(z^t)}_2^2 < \infty$ almost surely so that $p=2$ in \Cref{l: convergence to 0}. What is left is to show that $\norm{ \sum_{s=0}^\infty \mu_s [g_\varepsilon(z^s) - \nabla_z \q L_\varepsilon(z^{s})] }_2 < \infty$ almost surely.  Consider a random sequence 
\[
M_0 = 0 
\quad \text{and} \quad
M_t := \sum_{s=0}^{t-1} \mu_s [g_\varepsilon(z^s) - \nabla_z \q L_\varepsilon(z^{s})] \in \bb C^{d}, \quad t \ge 1.
\]
and filtration $\{ \q F_t \}_{t \ge 0}$ as before. By construction, $\{ M_t \}_{t \ge 0}$ is adapted to filtration $\{ \q F_t \}_{t \ge 0}$. Furthermore, by \eqref{eq: stoch grad cond exp} we have
\begin{align*}
\bb E[M_{t+1} \,|\, \q F_t ] 
& = \sum_{s=0}^{t-1} \mu_s [g_\varepsilon(z^s) - \nabla_z \q L_\varepsilon(z^{s})] +  \mu_{t} \bb E [g(z^t) - \nabla_z \q L_\varepsilon(z^{t}) \,|\, \q F_t ]  \\
& = \sum_{s=0}^{t-1} \mu_s [g_\varepsilon(z^s) - \nabla_z \q L_\varepsilon(z^{s})]
+ \mu_{t} [\nabla_z \q L_\varepsilon((z^t) - \nabla_z \q L_\varepsilon(z^{t}) ] \\
& = \sum_{s=0}^{t-1} \mu_s [g_\varepsilon(z^s) - \nabla_z \q L_\varepsilon(z^{s})] = M_{t},
\end{align*}
so that $M_t$ is a martingale and each of its components is a martingale. \Cref{p: gradient properties} also provides an upper bound on the expected squared norm of $g(z^t) - \nabla_z \q L_\varepsilon(z^{t})$. More precisely, using \eqref{eq: stoch grad cond exp}-\eqref{eq: stoch grad cond exp norm} and tower property we obtain
\begin{align*}
\bb E \norm{g(z^t) - \nabla_z \q L_\varepsilon(z^{t})}_2^2 
& = \bb E \left[ \bb E \left[ \norm{ g_\varepsilon(z^t) - \bb E\left[ g_\varepsilon(z^{t}) \,|\, \q F_{t} \right] }_2^2 \,|\, \q F_{t} \right] \right] \\
& = \bb E \left[ \bb E \left[ \norm{ g_\varepsilon(z^t) }_2^2 \,|\, \q F_{t} \right] - \norm{ \bb E \left[ g_\varepsilon(z^{t}) \,|\, \q F_{t} \right] }_2^2  \right] \\
& \le \bb E \left[ \alpha [\q L_\varepsilon(z^t) - \q L_\varepsilon^{\inf}] + \beta \norm{ \nabla_z \q L_\varepsilon(z^{t}) }_2^2 + \delta 
- \norm{ \nabla_z \q L_\varepsilon(z^{t})}_2^2 \right] \\
& = \alpha \bb E \left[\q L_\varepsilon(z^t) - \q L_\varepsilon^{\inf}\right] + (\beta-1) \bb E \norm{ \nabla_z \q L_\varepsilon(z^{t}) }_2^2 + \delta.
\end{align*}
Consequently, by \cite[Problem 13.27]{Athreya.2006}, the expected squared norm of $M_t$ is bounded by
\begin{align*}
\bb E \norm{M_t}_2^2  & = \sum_{s=0}^{t-1} \bb E \mu_s^2 \norm{g_\varepsilon(z^s) - \nabla_z \q L_\varepsilon(z^{s})}_2^2 \\
& \le  \sum_{s=0}^{t-1} \mu_s^2 \left( \alpha [ \bb E \q L_\varepsilon(z^s) - \q L_\varepsilon^{\inf}] + \delta \right) +  (\beta-1) \sum_{s=0}^{t-1} \mu_s^2 \bb E \norm{ \nabla_z \q L_\varepsilon(z^{s}) }_2^2. 
\end{align*} 
Since $\sum_{t =0}^\infty \mu_t^2$ is convergent, the sequence $\{ \mu_t \}_{t \ge 0}$ vanishes and is bounded. By \Cref{thm: SAF convergence decreasing expectation}, the sequence $\bb E \q L_\varepsilon(z^t)$ is bounded and $\sum_{s=0}^\infty \mu_s \bb E \norm{ \nabla_z \q L_\varepsilon(z^{s}) }_2^2 < \infty$. Denote by $\nu := \sup_{s \ge 0} \max\{ \mu_s, \bb E \q L_\varepsilon(z^s) \}$. Then,
\begin{align*}
& \bb E \norm{M_t}_2^2 \le \left[ \alpha [ \nu - \q L_\varepsilon^{\inf}] + \delta \right] \sum_{s=0}^{t-1} \mu_s^2 + \nu \max \{ \beta-1,0 \} \sum_{s=0}^{t-1} \mu_s \bb E \norm{ \nabla_z \q L_\varepsilon(z^{s}) }_2^2 \\
& \quad  \le \left[ \alpha [ \nu - \q L_\varepsilon^{\inf}] + \delta \right] \sum_{s=0}^\infty \mu_s^2 + \nu \max \{ \beta-1,0 \} \sum_{s=0}^\infty \mu_s \bb E \norm{ \nabla_z \q L_\varepsilon(z^{s}) }_2^2 < \infty.
\end{align*} 
For components of $(M_t)_j$, $j \in [d]$, this with Jensen's inequality \cite[Proposition 6.2.6]{Athreya.2006} yields 
\[
\sup_{t \ge 0} \bb E[ \max\{(M_t)_j , 0\}]
\le \sup_{t \ge 0} \bb E |(M_t)_j |
\le \sup_{t \ge 0} \bb E \norm{M_t}_2
\le \sup_{t \ge 0} \sqrt{\bb E \norm{M_t}_2^2} 
< \infty.
\]
Then, by \cite[Theorem 13.3.2]{Athreya.2006} each component $(M_t)_j$ converges to some random variable $M_j$ a.s. and $\bb E |M_j| < \infty$, which implies $M_j < \infty$ a.s. Let $M = \left( M_1, \ldots, M_d \right)^T$. Consequently, we get
\[
\norm{ \sum_{s=0}^\infty \mu_s [g_\varepsilon(z^s) - \nabla_z \q L_\varepsilon(z^{s})] }_2
= \norm{ \lim_{t \to \infty} M_t }_2 
= \norm{M}_2 
= \left[ \sum_{j =1}^d |M_j|^2 \right] < \infty \quad \text{a.s}.
\]
Finally, \Cref{l: convergence to 0} gives $\lim_{t \to \infty} \norm{\nabla_z \q L_\varepsilon(z^{t})}_2 = 0$ a.s.
\end{proof}

At last, we briefly discuss \Cref{rem: abs constants}.

\begin{proof}[Proof of \Cref{rem: abs constants}.]
For \Cref{rem: abs constants}, we can further bound $\q L_{\varepsilon,r}^{\inf} \ge 0$ in \eqref{eq: norm expectation bound}, which gives
\[
\bb E \norm{g_\varepsilon(z)}_2^2 \le \alpha \q L_{\varepsilon}(z) + \beta \norm{ \nabla_z \q L_{\varepsilon}(z) }_2^2.
\] 
Then, one can repeat the proofs in this section by considering sequence $\{ \q L_\varepsilon(z^{t}) \}_{t \ge 0}$ instead of $\{ \q L_\varepsilon(z^{t}) - \q L_\varepsilon^{\inf} \}_{t \ge 0}$ and treating $\delta$ as zero. This would lead to replacement of $\q L_\varepsilon(z^{0}) - \q L_\varepsilon^{\inf}$ with $\q L_\varepsilon(z^{0})$ in the resulting statements. 
\end{proof}

\subsection{Proof of \Cref{col: Kaczmarz} for Kaczmarz method}
\begin{proof}
The proof follows from \Cref{thm: SAF convergence constant} with
\[
\gamma = 4 \norm{A} \sqrt{ \q L(z^0)},
\] 
and $\mu_t = 1/ \norm{A}_F^2$. Let us start by restating constants $\alpha, \beta$ and $\delta$ in \eqref{eq: abc params}. In \eqref{eq: alpha kaczmarz}, we showed that  $\alpha = \norm{A}_F^2$. Since for the Kaczmarz method $K=1$, we have $\beta = 1 - 1/K = 0$. For $\delta$ we first observe that $\q L_{0,r} \ge 0$ and 
\[
\q L_{0,r}( \sqrt{y_r} \norm{A_{(r)}}_2^{-2} \conj{A_{(r)}} ) = \left| y^r \norm{A_{(r)}}_2^{-2} \left| A_{(r)}^T  \conj{A_{(r)}} \right| - \sqrt{y_r} \right|^2 =  |\sqrt{y_r} - \sqrt{y_r}\,|^2 = 0.
\] 
Thus, $\q L_{0,r}^{\inf} = 0$ and $\delta = \norm{A}_F^2 [\q L_0^{\inf} - \sum_{r \in [R]} \q L_{0,r}^{\inf} ]= 0$. Now, we need to check if all conditions of \Cref{thm: SAF convergence constant} are satisfied. The number of iterations $T$ satisfies 
\[
\frac{4 [ \q L_0(z^0)  - \q L_0^{\inf}]  }{\gamma^2 \mu} =
\frac{4 \norm{A}_F^2 \q L_0(z^0)}{16 \norm{A}^2 \q L_0(z^0) } \\
= \frac{\norm{A}_F^2}{4 \norm{A}^2 }
\le \left \lceil \frac{\norm{A}_F^2}{ 4\norm{A}^2 } \right \rceil
= T.
\]
Next, the inequality 
\[
\sqrt{ \alpha T} \norm{A}
= \norm{A}_F \norm{A} \left \lceil \frac{ \norm{A}_F^2} {4\norm{A}^2 } \right \rceil^{1/2}   
\le 2 \norm{A}_F \norm{A} \left[ \frac{ \norm{A}_F^2}{4 \norm{A}^2} \right]^{1/2}
= \norm{A}_F^2
\]
holds and the step size $\mu_t = 1/ \norm{A}_F^2$ admits $\mu_t \le (\sqrt{ \alpha T} \norm{A})^{-1}$. Hence, by \Cref{thm: SAF convergence constant}, 
\[
\min_{t = 0,\ldots, T-1} \bb E \norm{\nabla_z \q L_0(z^t)}_2 \le \gamma =  4 \norm{A} \sqrt{ \q L_0(z^0)}.
\]
\end{proof}

\subsection{Proofs for Ptychographic Iterative Engine }

We start with the proof of \Cref{thm: PIE as stochastic gradient}.

\begin{proof}[Proof of \Cref{thm: PIE as stochastic gradient}.]
Recall that by \eqref{eq: STFT matrix} the matrix $A_{r^t}$ admits
\[
A_{r^t} z = F [S_{s_t} w \circ z ] = F \diag(S_{s_t} w) z. 
\]
With a help of \eqref{eq: fourier prop}, we rewrite the iteration of \ref{eq: PIE} as 
\begin{align*}
z^{t+1} & = z^{t} + \frac{\alpha_t \diag(\conj{S_{s_t}  w})}{\norm{w}_\infty^2} \left[ \frac{1}{d} F^* \diag\left( \frac{ \sqrt{ y^{r^t} }}{ |F [S_{s_{t}} w \circ z^t]|} \right) F -  \frac{1}{d} F^* F  \right] \diag(S_{s_t} w) z^t  \nonumber \\
& = z^{t} + \frac{\alpha_t}{d \norm{w}_\infty^2} \diag(\conj{ S_{s_t} w}) F^* \left[ \diag\left( \frac{ \sqrt{ y^{r^t} } }{ |F \diag(S_{s_{t}} w) z^t]|} \right)  -  I_d \right] F \diag(S_{s_t} w) z^t  \nonumber \\
& = z^{t} + \frac{\alpha_t}{d \norm{w}_\infty^2}  A_{r^t}^* \left[ \diag\left( \frac{ \sqrt{ y^{r^t} }}{ | A_{r^t} z^t | } \right) - I_d \right] A_{r^t} z^t \\
& = z^t -  \frac{\alpha_t}{d \norm{w}_\infty^2} A_{r^t}^* \left[ A_{r^t} z^t - \sqrt{ y^{r^t} } \sgn( A_{r^t} z^t ) \right] \\
& = z^t - \frac{\alpha_t}{d \norm{w}_\infty^2} \nabla_z \q L_{0, r^t}(z^t)
= z^t - \frac{\alpha_t p_{r^t} }{d \norm{w}_\infty^2} g_0(z^t),
\end{align*}
where in the last line we used \eqref{eq: stochastic gradient} with $K = 1$. 

The step size of \ref{eq: PIE} is, in fact, analogous to the step size of the Kaczmarz method. To see this, let us compute $\norm{A_r}^2$. Using the definition of circular shift matrix \eqref{eq: shift op}, \eqref{eq: fourier prop} and properties of diagonal matrices, we obtain
\begin{align*}
\norm{A_r}^2
& = \norm{A_r^* A_r} 
= \norm{\diag(\conj{ S_{s_t} w} ) F^*  F \diag(S_{s_t} w) } \\
& = d \norm{\diag( |S_{s_t} w|^2 ) }
= d \norm{S_{s_t} w}_\infty^2
= d \norm{w}_\infty^2.
\end{align*}
Consequently, the step size $\mu_t$ is given by $\alpha_t p_{r^t}/\norm{A_{r^t}}^2$.
\end{proof}

Next, we combine \Cref{thm: PIE as stochastic gradient} with \Cref{cor: step size}.

\begin{proof}[Proof of \Cref{col: PIE convergence}.]
Substituting uniform probabilities $p_r = 1/ R$ and the choice of $\alpha_t = \alpha / (1 + t)^{1/2 + \theta}$ with $0 < \theta < 1/2$ leads to 
\[
\mu_t = \frac{\alpha_t p_{r^t} }{d \norm{w}_\infty^2} = \frac{\alpha }{ d R \norm{w}^2 (1 + t)^{1/2 + \theta}} =: \frac{\mu(\alpha)}{(1 + t)^{1/2 + \theta}}.
\]
Since $K=1$, i.e., only one index is sampled for the stochastic gradient, $\beta$ in \eqref{eq: abc params} is equal to zero. Thus, condition $\beta \norm{A}^2 \mu(\alpha) < 1$ is satisfied for all $\alpha >0$. Hence, we apply an analogue of \Cref{cor: step size} for almost sure convergence, which gives us desired convergence rate of $\min_{t = 0,\ldots, T-1} \norm{\nabla \q L(z^t)}_2$. 
\end{proof}

\section{Conclusions}

In this paper, we considered the Stochastic Amplitude Flow (SAF) algorithm for phase retrieval and derived its convergence guarantees. These results are also applicable to the Kaczmarz method and Ptychographic Iterative Engine as special cases of SAF. In particular, for the latter algorithm, we obtained sufficient conditions on parameters $\alpha_t$ so that the algorithm converges, which is a known issue among practitioners. As a next step, we aim to expand this analysis for extended Ptychographic Iterative Engine, a version of the algorithm for the case when not only an object $x$ has to be recovered, but also a vector $w$ in \eqref{eq: ptycho meas}, which is commonly unknown in practical applications.

\section*{Acknowledgment}
The author thanks Benedikt Diederichs, Frank Filbir and Patricia R\"{o}mer for helpful discussions. This work was funded by the Helmholtz Association under contracts No.~ZT-I-0025 (Ptychography 4.0), No.~ZT-I-PF-4-018 (AsoftXm), No.~ZT-I-PF-5-28 (EDARTI), No.~ZT-I-PF-4-024 (BRLEMMM). 

\bibliography{stochastic_AF_and_pie_convergence.bbl}

\end{document}